\theoremstyle{plain}
\newtheorem{theorem}{Theorem}
\newtheorem{corollary}{Corollary}
\newtheorem{lemma}{Lemma}
\theoremstyle{definition}
\newtheorem{example}{Example}
\newtheorem{remark}{Remark}
\newtheorem{question}{Question}
\DeclareMathOperator{\Res}{Res}
\DeclareMathOperator{\red}{red}
\newcommand{\enm}[1]{\ensuremath{#1}}          %
\newcommand{\cal}[1]{\mathcal{#1}}
\newcommand{\NN}{\enm{\mathbb{N}}}
\newcommand{\ZZ}{\enm{\mathbb{Z}}}
\newcommand{\PP}{\enm{\mathbb{P}}}
\newcommand{\Ii}{\enm{\cal{I}}}
\newcommand{\Oo}{\enm{\cal{O}}}
\newcommand{\Uu}{\enm{\cal{U}}}
\date{}
\begin{document}

\title[Range A]
{The maximal genus of space curves in the Range A}
\author{Edoardo Ballico and Philippe Ellia}
\address[Edoardo Ballico]{Dipartimento di Matematica, Universit\`a di Trento, Via Sommarive 14, 38123 Povo (Trento), Italy.}
\email{edoardo.ballico@unitn.it}
\address[Philippe Ellia]{Dipartimento di Matematica e Informatica, Universit\`a degli Studi di Ferrara, Via Machiavelli 30, 44121 Ferrara, Italy.} 
\email{phe@unife.it}
\thanks{Partially supported by MIUR and GNSAGA of INdAM (Italy)}
\subjclass[2010]{14H51}
\keywords{space curves; postulation; Hilbert function; genus}

\begin{abstract}
Fix positive integers $d, m$ such that $\frac{m^2+4m+6}{6} \le d < \frac{m^2+4m+6}{3}$ (the so-called Range A for space curves). Let $G(d,m)$ be the maximal genus of a smooth and connected degree $d$ curve $C\subset \PP^3$ such that $h^0(\Ii _C(m-1)) =0$. Here we prove that $G(d,m) = 1+(m-1)d -\binom{m+2}{3}$ if $m\ge 13.8\cdot 10^5$. The case $\frac{m^2+4m+6}{4} \le d < \frac{m^2+4m+6}{3}$ was known by work of Fl{\o}ystad and joint work of Ballico, Bolondi, Ellia, Mir\`{o}-Roig. To prove the case
 $\frac{m^2+4m+6}{6} \le d < \frac{m^2+4m+6}{4}$ we show that in this range for large $d$ every integer between $0$ and  $1+(m-1)d -\binom{m+2}{3}$ is the genus
 of some degree $d$ smooth and connected curve $C\subset \PP^3$ such that $h^0(\Ii _C(m-1)) =0$.
\end{abstract}

\maketitle

\section{Introduction}

Fix integers $m \ge 2$ and $d\ge 3$. Let $G(d,m)$ be the maximal genus of a smooth and connected curve, of degree $d$, $C\subset \PP^3$ with $h^0(\Ii _C(m-1))=0$. A classical problem which goes back to Halphen \cite{Halp}, is the computation of
the integer $G(d,m)$ (\cite[Problem 3.1]{h0}, \cite{h1}).  For this problem the set of all $(d,m)\subset \NN^2$ was divided in the following $4$ regions (ranges $\emptyset$, A, B and C) (\cite{bbem, fl1, h0, h1}), because both the integer $G(d,m)$ and the geometric properties of the curve with maximal genera are very different in different regions. Set 
$$G_A(d,m):= 1+(m-1)d -\binom{m+2}{3}.$$
If $d < \frac{m^2+4m+6}{6}$, then no such curve exists (\cite[Theorem 3.3]{h0}). Hence this is called the Range $\emptyset$.
Range A is when
\begin{equation}\label{eqi1}
\frac{m^2+4m+6}{6} \le d < \frac{m^2+4m+6}{3}
\end{equation}

In Range A it is easy to see that $G(d,m) \le G_A(d,m)$ (\cite[Theorem 3.3]{h0}) and it was conjectured that equality holds
(\cite[page 364]{hh1}). This conjecture is known to be true for all $d,m$ such that $\frac{m^2+4m+6}{4} \le d <
\frac{m^2+4m+6}{3}$ (\cite{fl1}, \cite[Corollary 2.4]{bbem}). We also mention \cite{fl2} and \cite{hh1} which settle a few
cases just on the right of the last inequality of (\ref{eqi1}). In this paper we prove the following result.

\begin{theorem}\label{i1}
Fix an integer $m\ge 13.8\cdot 10^5$. Let $d$ be any integer satisfying (\ref{eqi1}). Then $G(d,m) = G_A(d,m)$.
\end{theorem}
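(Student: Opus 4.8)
The plan is to split along the two subranges exactly as the abstract indicates. For $\frac{m^2+4m+6}{4}\le d<\frac{m^2+4m+6}{3}$ nothing is left to do: the equality $G(d,m)=G_A(d,m)$ is already in \cite{fl1} and \cite[Corollary 2.4]{bbem}. So the whole content is the left subrange
\begin{equation*}
\frac{m^2+4m+6}{6}\le d<\frac{m^2+4m+6}{4},
\end{equation*}
and there the upper bound $G(d,m)\le G_A(d,m)$ is already known (\cite[Theorem 3.3]{h0}); what must be produced is a smooth connected curve $C\subset\PP^3$ of degree $d$ with $h^0(\Ii_C(m-1))=0$ and genus exactly $G_A(d,m)$. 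I would in fact prove the stronger "no gaps" statement promised in the abstract: every integer $g$ with $0\le g\le G_A(d,m)$ is the genus of such a curve. The advantage of proving the interval statement rather than just hitting the top value is that it makes the induction self-supporting — one can build the extremal curve from smaller pieces whose genera are allowed to range over whole intervals, absorbing the error terms that inevitably appear when one adjusts degree and genus simultaneously.

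The construction I would use is the classical liaison/linkage plus "adding conics, lines, and plane curves" machinery that underlies \cite{bbem}. Concretely: start from a curve that is well understood in this range (a suitable multiple of a line on a smooth quadric, or an arithmetically Cohen--Macaulay curve of the right degree whose Hilbert function is as uniform as possible), and then modify it by a sequence of elementary operations — smoothing a union of the curve with a disjoint line or plane curve, performing a biliaison on a quadric or cubic surface to raise the genus in controlled steps, and finally taking a general deformation to smooth everything while preserving $h^0(\Ii_C(m-1))=0$. Each biliaison step changes $(d,g)$ by a predictable amount, and the key numerical fact to check is that, throughout the process, the condition $h^0(\Ii_C(m-1))=0$ is preserved; this is where the hypothesis \eqref{eqi1} and the largeness of $m$ enter, because one needs the intermediate curves to have degree large enough (relative to $m$) that a general curve through them fails to lie on any surface of degree $m-1$. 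I would organize this as: (i) a lemma producing, for each admissible $d$, a single curve of genus $G_A(d,m)$ with the required postulation; (ii) a lemma that, given a smooth connected curve of degree $d$ and genus $g\le G_A(d,m)$ with $h^0(\Ii_C(m-1))=0$, produces one of genus $g+1$ (or $g-1$) with the same properties, as long as $g$ is not already extremal; (iii) assembling (i) and (ii) into the interval statement; (iv) deducing Theorem~\ref{i1}.

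For step (i) the natural approach is to realize the extremal curve as a divisor on a smooth quadric surface $Q$ when $d$ is in the appropriate part of the range, and as a divisor on a smooth cubic or as a curve linked to a low-degree curve on such a surface otherwise: the maximal genus in Range A is attained by curves "as special as possible" with respect to quadrics, so the extremal curve should have $h^0(\Ii_C(2))$ as large as the degree permits, and the formula $G_A(d,m)=1+(m-1)d-\binom{m+2}{3}$ is precisely the genus of a curve of degree $d$ whose Hilbert function agrees with the maximal one up to degree $m-1$ and then is forced by $d$. Verifying that such a curve exists, is smooth and connected, and has $h^0(\Ii_C(m-1))=0$ is a postulation computation: one writes down the expected Hilbert function, checks it is of "decreasing type" so that a smooth curve with that postulation exists (by the theory of Hilbert functions of space curves, e.g. the results used in \cite{bbem}), and checks the degree-$(m-1)$ value is zero, which is immediate from the Range A inequalities.

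The main obstacle I anticipate is step (ii), the single-genus-increment lemma, in the far-left part of the range $\frac{m^2+4m+6}{6}\le d<\frac{m^2+4m+6}{4}$ that was not covered before. Here $d$ is small relative to $m$, so the extremal curves are not divisors on a quadric, and the biliaison steps must be done on cubic (or higher) surfaces; controlling the Hilbert function of the linked curve, and in particular keeping $h^0(\Ii(m-1))=0$ after a biliaison that necessarily passes through curves lying on a surface of low degree, is delicate. This is exactly the point where one expects to need $m$ enormous: the quantitative estimate that a general curve through a given subcurve of degree $\sim m^2/6$ imposes independent conditions on degree-$(m-1)$ surfaces, and that the smoothing deformations exist, will carry explicit constants whose product forces $m\ge 13.8\cdot 10^5$. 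I would isolate this as a stand-alone numerical lemma — "if $m$ is at least this bound and $d$ satisfies \eqref{eqi1}, then the following Hilbert function is admissible and has zero value at $m-1$" — prove it by a direct (if tedious) estimate, and then feed it into the linkage construction; everything else is bookkeeping with Euler characteristics and the standard smoothing-of-nodal-curves results.
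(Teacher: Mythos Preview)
Your step (i) is fatally flawed: a divisor on a smooth quadric (or cubic) has $h^0(\Ii_C(2)) \ge 1$ (resp.\ $h^0(\Ii_C(3)) \ge 1$), hence $h^0(\Ii_C(m-1)) > 0$ for every $m \ge 4$, so no such curve can witness $G(d,m)$. The same obstruction kills the biliaison plan: an elementary biliaison on a degree-$s$ surface keeps the curve on a degree-$s$ surface. Your intuition that the extremal curve is ``as special as possible with respect to quadrics'' is exactly backwards --- the defining condition $h^0(\Ii_C(m-1)) = 0$ forces $C$ to lie on \emph{no} surface of degree $< m$, and the extremal curves in Range A in fact satisfy $h^i(\Ii_C(m-1)) = 0$ for all $i$ (as noted just after the statement of Theorem~\ref{i1}). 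The ``decreasing type Hilbert function'' remark does not rescue this: the issue is not admissibility of a numerical function but producing an actual smooth connected curve not on any surface of degree $m-1$, and neither divisors on low-degree surfaces nor linkage through them can do that.

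The paper's argument is entirely different. After reducing to the lower half of Range A via \cite[Corollary 2.4]{bbem}, one sets $g := G_A(d,m)$ and splits on the size of $g$: if $g \le 0.02\, d^{3/2}$ one quotes the maximal-rank construction of \cite[Corollary 1.3]{bef}; otherwise $g \ge 0.34 \cdot 10^{15}$ (arranging this overlap is precisely where $m \ge 13.8 \cdot 10^5$ enters) and one invokes Theorem~\ref{i2}. That theorem is proved by an inductive Horace-method construction: start from $C_{t,k} = C_t \sqcup C_k$, a disjoint union of two arithmetically Cohen--Macaulay curves with resolution (\ref{eqa6}), satisfying $h^i(\Ii_{C_{t,k}}(t+k-1)) = 0$ for all $i$; then for $s = t+k+1, t+k+3, \dots$ adjoin a non-special curve $Y_s$ disjoint from $C_{t,k}$, controlling the cohomology at each step by residuation against a smooth quadric $Q$ equipped with an auxiliary ``grid'' of lines (Assertions $A(s,t,k)$ and $B(x,t,k)$). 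The quadric is only a residuation device --- the final curve does not lie on it. The large lower bound on $m$ comes from the numerical lemmas governing the grid parameters (e.g.\ Lemmas \ref{oov2.1=}, \ref{oov4=}, \ref{ov9}) together with the overlap with the range of \cite{bef}, not from a single Hilbert-function estimate of the kind you envisage.
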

Fix $m$ and $d$ in the Range A, i.e. satisfying (\ref{eqi1}). By \cite[first part of the proof of Theorem 3]{h0} any curve $C\subset \PP^3$ with degree $d$ and genus $g$ with $g= G_A(d,m)$ has $h^1(\Oo _C(m-1)) =0$, i.e. $h^2(\Ii _C(m-1))=0$. Since $h^0(\Ii _C(m-1))=0$ and $(m-1)d+1 -g =\binom{m+2}{3}$, Riemann-Roch gives $h^0(\Oo _C(m-1)) =\binom{m+2}{3}$ and hence
$h^i(\Ii _C(m-1)) =0$ for all $i\in \NN$. Thus the curves proved to exist in Theorem \ref{i1} have some nice cohomological properties. Now we add the stronger assumption that $h^1(\Oo _C(m-2)) =0$, i.e. $h^2(\Ii _C(m-2)) =0$. Since $\dim C =1$, the exact sequence $$0 \to \Ii _C(t)\to \Oo _{\PP^3}(t) \to \Oo_C(t)\to 0$$gives $h^i(\Ii _C(m-i)) =0$ for all $i\ge 3$. Thus the 
Castelnuovo-Mumford's lemma gives $h^1(\Ii _C(t)) =0$ for all $t\ge m$, that $\Ii _C(m))$ is globally generated and that the
homogeneous ideal of $C$ is generated by forms of degree $m$ (\cite[p. 99]{m}, \cite[\S 3]{bm}). Thus with this additional
assumption we would get strong geometrical properties of $C$. In the next theorem we will prove that $h^1(\Oo _C(m-2))=0$ and
hence the homogeneous ideal of $C$ is generated by degree $m$ forms.

For any scheme $X\subset \PP^3$ let $N_X$ denote its normal sheaf.  Our main result is the following one.

\begin{theorem}\label{i2}
Fix positive integers $m$ and $d$ such that $\frac{m^2+4m+6}{6} \leq d < \frac{m^2+4m+6}{4}$ and
$G_A(d,m) \ge 0.34\cdot 10^{15}$. Then
$G(d,m) = G_A(d,m)$ and there is a smooth and connected curve $C\subset \PP^3$ of degree $d$ and genus $G(d,m)$ such that
$h^1(\Oo _C(m-2)) =0$, $h^i(\Ii _C(m-1)) =0$, $i=0,1$, and
$h^1(N_C(-1))=0$.
\end{theorem}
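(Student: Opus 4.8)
The strategy should be to construct the extremal curves by a degeneration/smoothing argument, building $C$ as a smoothing of a reducible nodal curve whose components are well-understood (plane curves, curves on a smooth quadric, or curves lying on low-degree surfaces), and then to verify that the cohomological vanishings $h^1(\Oo_C(m-2))=0$, $h^i(\Ii_C(m-1))=0$ for $i=0,1$, and $h^1(N_C(-1))=0$ are open conditions that hold on the nodal model. Since the range $\frac{m^2+4m+6}{4}\le d<\frac{m^2+4m+6}{3}$ is already settled (by \cite{fl1}, \cite{bbem}), the real work is the left half $\frac{m^2+4m+6}{6}\le d<\frac{m^2+4m+6}{4}$; by the paper's abstract the intended mechanism is to show that \emph{every} integer $g$ with $0\le g\le G_A(d,m)$ is realized, i.e. to prove a ``no gaps'' statement, so that in particular $g=G_A(d,m)$ is attained. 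I would set this up as an induction on $d$ (or on the genus), where the inductive step adds a plane curve of suitable degree, or a line, a conic, or a twisted cubic, to an already-constructed curve of smaller degree, adjusting the genus of the new component and the number of incidence points to hit any target genus in the allowed window.

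The key steps, in order: (1) Fix $d,m$ in the left subrange and the target genus $g=G_A(d,m)$. Reduce, via the semicontinuity of $h^0(\Ii_C(m-1))$, $h^1(\Oo_C(t))$ and $h^1(N_C(-1))$, to producing a single reducible nodal curve $Y=A\cup B$ with $\deg Y=d$, arithmetic genus $p_a(Y)=g$, $Y$ smoothable to a smooth connected curve, and with $h^1(\Oo_Y(m-2))=0$, $h^0(\Ii_Y(m-1))=0$, $h^1(N_Y(-1))=0$ (the latter guarantees unobstructedness and that a nearby smoothing keeps all the open vanishings). (2) Choose $A$ to be a curve whose Hilbert function and genus are controlled — e.g. a large plane curve of degree $a$ (so $A$ is a complete intersection of a plane with a surface of degree $a$), which forces $h^1(\Oo_A(m-2))=0$ once $a\le m-1$ and gives an explicit $h^0(\Ii_A(m-1))$ — and choose $B$ to absorb the remaining degree $d-a$ and to correct both the genus and the postulation; a Castelnuovo–Mumford / liaison argument then gives $h^0(\Ii_Y(m-1))=0$. (3) Verify $h^1(N_Y(-1))=0$ by the standard Mayer–Vietoris sequence for the normal sheaf of a nodal union, $0\to N_A(-1)\oplus N_B(-1)\to N_Y(-1)\to T_{Y,\mathrm{nodes}}\to 0$ (with the appropriate $T^1$ correction at the nodes), reducing to $h^1(N_A(-1))=0$, $h^1(N_B(-1))=0$ and a surjectivity at the nodes — here the lower bound $G_A(d,m)\ge 0.34\cdot 10^{15}$, equivalently $m$ large, is what makes the numerology of the component degrees work. (4) Invoke a known smoothing criterion (Hartshorne–Hirschowitz style: a nodal curve with $h^1(N_Y)=0$ whose two components meet at enough points is smoothable to a smooth connected curve) to conclude.

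I expect the main obstacle to be step (2)–(3): simultaneously controlling the postulation ($h^0(\Ii_Y(m-1))=0$, which wants the curve ``spread out''), the cohomology of $\Oo_Y(m-2)$ (which wants $Y$ not too special), and $h^1(N_Y(-1))=0$ (which restricts how concentrated the components and nodes can be) — all while hitting the \emph{exact} target genus $G_A(d,m)$, the largest allowed, where there is no slack. The boundary of Range A ($d$ near $\frac{m^2+4m+6}{4}$, where the two subranges meet, and $d$ near $\frac{m^2+4m+6}{6}$, the edge of Range $\emptyset$) will be the delicate cases: near $\frac{m^2+4m+6}{6}$ the curve is forced to lie on a surface of degree close to $m$ and has very constrained postulation, so the component $B$ has little room, and this is presumably exactly why the astronomically large bound on $m$ (equivalently on $G_A(d,m)$) is needed — to guarantee that the auxiliary plane and space curves one glues in have enough ``working room'' for the incidence and genus bookkeeping to close. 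A secondary technical point is checking $h^1(N_Y(-1))=0$ rather than merely $h^1(N_Y)=0$: the twist by $-1$ makes the vanishing genuinely harder and will require that each component already satisfies $h^1(N_{\cdot}(-1))=0$, which is itself a nontrivial input for the building blocks (true for plane curves of controlled degree, lines, conics, rational normal curves, and one should cite or re-derive it).
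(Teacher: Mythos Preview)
Your high-level framework (build a nodal curve, verify the vanishings on it, smooth) matches the paper, but your proposed implementation has a genuine gap that would make the argument fail.

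The main problem is your choice of building blocks. You suggest taking $A$ to be a plane curve of degree $a\le m-1$. But then $\Ii_A(m-1)$ has at least $\binom{m+1}{3}$ sections (all multiples of the plane), and there is no plausible way for a residual component $B$ of degree $d-a\le d$ to kill that many sections while simultaneously hitting the exact target genus $G_A(d,m)$ and keeping $h^1(N_B(-1))=0$. The paper does \emph{not} use plane curves: its anchor is the disjoint union $C_{t,k}=C_t\sqcup C_k$ of two specific arithmetically Cohen--Macaulay curves with resolution $0\to t\Oo(-t-1)\to (t+1)\Oo(-t)\to\Ii_{C_t}\to 0$, chosen precisely because $h^i(\Ii_{C_{t,k}}(t+k-1))=0$ for all $i$. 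This postulation-perfect starting point is what makes the induction launch; a plane curve has the opposite behaviour.

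The second missing ingredient is the mechanism of the induction. You propose ``induction on $d$, adding a line/conic/cubic''. That step size is far too small: to go from $C_{t,k}$ (which has $h^i(\Ii(t+k-1))=0$) up to a curve with $h^i(\Ii(m-1))=0$ you must climb in the twist, not just in the degree. The paper does this by the Horace method: at each step one passes from level $s$ to level $s+2$ via the residual exact sequence of a smooth quadric $Q$, attaching to $Y$ a carefully chosen \emph{grid} of lines $T\subset Q$ (bidegree $(e,\delta-e)$) together with embedded double points at some nodes of $T$, so that the trace on $Q$ and the residual both have the desired cohomology. The entire content of the paper --- the numerical lemmas bounding $\delta$, $e$, the integers $a(s,t,k)$, $g(s,t,k)$, and the constraint $t/200\le k\le t/30$ --- exists to make this grid bookkeeping close, and it is exactly these constraints that force the bound $G_A(d,m)\ge 0.34\cdot 10^{15}$. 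Your proposal does not contain any analogue of this, and without it there is no way to propagate $h^0(\Ii(m-1))=0$ while controlling the genus to land exactly at $G_A(d,m)$.

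Finally, your step (3) correctly identifies the Mayer--Vietoris argument for $h^1(N_Y(-1))=0$, and this part of your plan is essentially what the paper does in its last smoothing step. But that step only works once the hard part --- producing $Y''\cup C_{t,k}$ with the right numerics --- is done.
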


We will get Theorem \ref{i1} from Theorem \ref{i2} with a small argument.

In the statement of Theorem \ref{i2} we assumed that $d<\frac{m^2+4m+6}{4}$, because the range $\frac{m^2+4m+6}{4} \le d < \frac{m^2+4m+6}{3}$
is covered by \cite[Proposition 2.2 and Corollary 2.4]{bbem}. In the range of \cite[Corollary 2.4]{bbem} our proof of Theorem \ref{i2} is very bad (it gives examples of nice curves $C$, but not enough to cover all $d$). We stated in Theorem \ref{i2} that the solution $C$ satisfies $h^1(N_C(-1)) =0$, because this vanishing has the following
interesting geometrical consequences. Since $h^1(N_C(-1)) =0$, we have $h^1(N_C)=0$ and hence the Hilbert scheme $\mathrm{Hilb}(\PP)^3$ is smooth and of dimension $4d$ at $[C]$ (\cite[\S 1]{pe}). Let $\Gamma$ be the unique irreducible component of $\mathrm{Hilb}(\PP)^3$ containing $[C]$. Fix a a plane $H\subset \PP^3$. Since $h^1(N_C(-1)) =0$, for a general $S\subset H$ with cardinality $d$ there is $X\in \Gamma$ such that $X\cap \Gamma =S$ (\cite{l}, \cite[Theorem 1.5]{pe}).

Take $(d,m)$ in the Range A and an integer $g$ such that $0\le g < G_A(d,m)$. Is there a smooth and connected curve $C\subset \PP^3$ of degree $d$ and genus $g$ with $h^0(\Ii _C(m-1)) =0$~? In the upper half of Range A we know it only if $G_A(d,m) -g$ is small (\cite[Proposition 4.3]{bbem}). In the lower half of the Range A we adapt the proof of Theorem \ref{i2} to prove the following result.

\begin{theorem}\label{i3}
Fix integers $m, d, g$ such that $m\ge 13.8\cdot 10^5$, $\frac{m^2+4m+6}{6} \leq d < \frac{m^2+4m+6}{4}$ and $0\le g \le G_A(d,m)$. Then there is a smooth and connected
curve $C\subset \PP^3$ of degree $d$ and genus $g$ such that $h^0(\Ii _C(m-1)) =0$ and $h^1(N_C(-1)) =0$.
\end{theorem}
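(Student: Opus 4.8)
The plan is to deduce Theorem \ref{i3} from the machinery already assembled for Theorem \ref{i2}, by deforming the extremal curve produced there down to smaller genus while keeping the relevant cohomology under control. Concretely, fix $(d,m)$ in the lower half of Range A with $m\ge 13.8\cdot 10^5$, so that by Theorem \ref{i2} we have $G(d,m)=G_A(d,m)$ and there exists a smooth connected $C_0\subset\PP^3$ of degree $d$ and genus $G_A(d,m)$ with $h^1(\Oo_{C_0}(m-2))=0$, $h^i(\Ii_{C_0}(m-1))=0$ for $i=0,1$, and $h^1(N_{C_0}(-1))=0$. The vanishing $h^1(N_{C_0}(-1))=0$ is exactly what lets us move inside the Hilbert scheme: since $h^1(N_{C_0})=0$ as well, $\mathrm{Hilb}(\PP^3)$ is smooth of dimension $4d$ at $[C_0]$, and a general deformation of $C_0$ in its component is again smooth, connected, of degree $d$, and (by semicontinuity) still satisfies $h^1(N_C(-1))=0$ and $h^0(\Ii_C(m-1))=0$. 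So the only real content is producing, for \emph{each} intermediate value of $g$, a curve in the right degree with these properties.

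\smallskip

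First I would set up a descending induction on $g$ from $G_A(d,m)$ down to $0$, where the inductive step passes from a curve of genus $g$ to one of genus $g-1$ (or $g-1$ down to some controlled drop). The standard tool here is a smoothing/degeneration argument: given a smooth connected $C$ of degree $d$, genus $g$ with $h^1(N_C(-1))=0$ and $h^0(\Ii_C(m-1))=0$, one attaches or subtracts a suitable configuration so as to decrease the genus by one while staying in degree $d$ — for instance, one replaces $C$ by a nodal curve $C'=C''\cup L$ where $L$ is a secant line and $C''$ has degree $d-1$, then smooths $C'$; or, more in the spirit of the elementary transformations used in this circle of ideas (Hartshorne--Hirschowitz), one performs an elementary modification along a plane section. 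The key point is that $h^1(N_C(-1))=0$ is an open condition that is inherited by the general smoothing provided the reducible or nodal model also has $h^1(N(-1))=0$, which one checks via the Mayer--Vietoris / normal bundle exact sequence for the union, using $h^1(N_L(-1))=0$ for a line and the gluing term being cohomologically trivial. The constraint $h^0(\Ii_C(m-1))=0$ is likewise open and is preserved as long as the added or removed piece does not force sections of $\Ii(m-1)$ — here one uses that $d$ is large relative to $m$ in Range A (the lower bound $d\ge\frac{m^2+4m+6}{6}$), so a degree-$d$ curve is far from lying on a surface of degree $m-1$, and a one-step modification cannot create such a surface.

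\smallskip

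The main obstacle, and where most of the work goes, is guaranteeing that the inductive step is available for \emph{every} $g$ in the full range $0\le g\le G_A(d,m)$ — not just near the top — and that it decreases the genus by exactly $1$ (or that any larger jumps are covered by constructing separately, e.g. from a disjoint union of lines or a plane curve plus lines, the curves of small genus directly). Near $g=0$ one wants to bottom out at an explicit smooth rational curve of degree $d$ with $h^0(\Ii_C(m-1))=0$ and $h^1(N_C(-1))=0$; rational curves of every degree $d\ge 1$ with unobstructed deformations exist and a general one has $h^1(N_C(-1))=0$, and for $d$ in Range A a general rational curve of degree $d$ is not contained in a surface of degree $m-1$, so this base case is fine, but it must be stated and checked. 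For the intermediate steps the delicate quantitative input is exactly the numerical hypothesis $m\ge 13.8\cdot10^5$ (equivalently $G_A(d,m)\ge 0.34\cdot 10^{15}$ as in Theorem \ref{i2}): one inherits it wholesale from Theorem \ref{i2} to start the chain, and then must verify that each modification keeps us within the hypotheses of whatever vanishing lemmas (the ``liaison'' or ``Castelnuovo--Mumford'' type estimates used earlier in the paper) are invoked to re-establish $h^0(\Ii_C(m-1))=0$ after the genus drop. I expect the bookkeeping — that the degree stays fixed at $d$, the genus drops by one, and all three vanishings ($h^0(\Ii_C(m-1))$, and $h^1(N_C(-1))$, plus whatever auxiliary $h^1(\Oo_C(\cdot))$ vanishing is needed to control $h^0(\Ii_C(m-1))$ via Riemann--Roch) survive — to be the technically demanding part, handled by a careful analysis of the normal bundle sequence of the reducible model at each step, exactly parallel to the argument already carried out for the extremal curve in the proof of Theorem \ref{i2}.
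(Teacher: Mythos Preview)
Your proposal has a genuine gap: the descending induction on $g$ has no well-defined inductive step. You write that one ``replaces $C$ by a nodal curve $C'=C''\cup L$ where $L$ is a secant line and $C''$ has degree $d-1$, then smooths $C'$,'' but this presupposes the existence of $C''$. If $L$ is a $2$-secant to $C''$ then $p_a(C''\cup L)=p_a(C'')+1$, so to land at genus $g-1$ you would need $C''$ of degree $d-1$ and genus $g-2$, which you have not constructed. If instead you mean to \emph{degenerate} $C$ to a union $C''\cup L$, then $p_a$ is preserved under flat degeneration, so smoothing the same union gets you back to genus $g$, not $g-1$. There is no elementary ``subtract a node'' operation that lowers the geometric genus of a smooth curve while keeping degree fixed and preserving $h^0(\Ii_C(m-1))=0$ and $h^1(N_C(-1))=0$; the vague reference to elementary transformations along a plane section does not supply one either.

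The paper proceeds quite differently. It inducts on $d$, not on $g$. For $g\le G_A(d-1,m)$ one takes (by induction) a curve $Y$ of degree $d-1$ and genus $g$ with the required vanishings, attaches a general $1$-secant line $L$, checks via Mayer--Vietoris and positive elementary transformations that $h^1(N_{Y\cup L}(-1))=0$, and smooths. This \emph{increases} degree while \emph{fixing} genus, which is the easy direction. For the thin strip $G_A(d-1,m)<g<G_A(d,m)$ the paper does not descend from the extremal curve at all: it simply reruns the entire construction of Theorem~\ref{i2} (the assertions $A(s,t,k)$, $B(x,t,k)$, $B'(m-3,t,k)$) with this value of $g$ plugged in, noting that the auxiliary integers $t,k,y,u(x,t,k),v(x,t,k)$ depend only on $g$ and the parity of $m$. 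Finally, for $g<0.34\cdot 10^{15}$ the bound $m\ge 13.8\cdot 10^5$ forces $d$ large enough that $g\le 0.02\,d^{3/2}$, and one quotes \cite{bef} directly. The base of the induction on $d$ (when $(d-1,m)$ falls outside Range~A) gives $G_A(d,m)\le m-1\le d-3$, and one quotes \cite{be2} for non-special curves.
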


As an easy corollary of Theorem \ref{i3} we get the following result.

\begin{corollary}\label{i5}
Fix integers $d, m$ such that $m\ge 13.8\cdot 10^5$ and $d \ge \frac{m^2+4m+6}{4}$. Set $\delta := \lfloor \frac{m^2+4m+5}{4}\rfloor$. Fix an integer $g$ such that
$0\le g \le G_A(\delta ,m)$. Then there is a smooth and connected
curve $C\subset \PP^3$ of degree $d$ and genus $g$ such that $h^0(\Ii _C(m-1)) =0$ and $h^1(N_C(-1)) =0$.
\end{corollary}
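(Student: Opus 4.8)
The plan is to deduce Corollary~\ref{i5} from Theorem~\ref{i3} applied at the single degree $\delta$, and then to raise the degree to $d$ by attaching general unisecant lines and smoothing. First I would record that $\delta$ falls in the range of Theorem~\ref{i3}: since a square is $\equiv 0$ or $1\pmod{4}$, the integer $m^2+4m+6\equiv m^2+2$ is never divisible by $4$, so $\frac{m^2+4m+6}{4}\notin\ZZ$, and a short computation shows $\delta=\lfloor\frac{m^2+4m+5}{4}\rfloor=\lfloor\frac{m^2+4m+6}{4}\rfloor$ is the largest integer $<\frac{m^2+4m+6}{4}$, with $\frac{m^2+4m+6}{6}\le\delta<\frac{m^2+4m+6}{4}\le d$ and $e:=d-\delta\ge1$. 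Given $g$ with $0\le g\le G_A(\delta,m)$ and $m\ge 13.8\cdot10^5$, Theorem~\ref{i3} then yields a smooth connected curve $C_0\subset\PP^3$ of degree $\delta$ and genus $g$ with $h^0(\Ii_{C_0}(m-1))=0$ and $h^1(N_{C_0}(-1))=0$.

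Next I would fix $e$ general points $P_1,\dots,P_e\in C_0$ and general lines $L_i\ni P_i$ with the $L_i$ pairwise disjoint and $L_i\cap C_0=\{P_i\}$ transversally, and set $Y:=C_0\cup L_1\cup\dots\cup L_e$. This $Y$ is a connected reduced nodal curve of degree $d$ whose dual graph is a tree, so $p_a(Y)$ is the sum of the genera of its components, namely $g$. Since $C_0\subset Y$, we have $\Ii_Y(m-1)\subset\Ii_{C_0}(m-1)$, hence $h^0(\Ii_Y(m-1))=0$.

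Then I would control the normal sheaf of $Y$ and smooth. Peeling off the lines one at a time ($Y_0:=C_0$, $Y_j:=C_0\cup L_1\cup\dots\cup L_j$), the ``adding a general line'' lemma --- if $h^1(N_{Y_j}(-1))=0$ and $L_{j+1}$ passes through a general smooth point of $Y_j$ in a general direction then $h^1(N_{Y_{j+1}}(-1))=0$, proved from the exact sequence relating $N_{Y_{j+1}}$ to $N_{Y_j}$, $N_{L_{j+1}}$ and the gluing at the new node --- gives, inductively, $h^1(N_Y(-1))=0$. Twisting by a hyperplane $H$, $N_Y|_H$ is supported on the finite scheme $Y\cap H$, so $h^1(N_Y|_H)=0$, and $0\to N_Y(-1)\to N_Y\to N_Y|_H\to0$ forces $h^1(N_Y)=0$. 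By the smoothing technique already used for Theorems~\ref{i2} and~\ref{i3} (standard smoothing theory for nodal space curves), $[Y]$ is a smooth point of $\mathrm{Hilb}(\PP^3)$ and the general member $C$ of its component is a smooth connected curve of degree $d$ and genus $g$; upper semicontinuity of $h^1(N_{(-)}(-1))$ and $h^0(\Ii_{(-)}(m-1))$ along the smoothing then gives $h^1(N_C(-1))\le h^1(N_Y(-1))=0$ and $h^0(\Ii_C(m-1))\le h^0(\Ii_Y(m-1))=0$, so $C$ is the required curve.

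I expect the only real content to be the two standard inputs --- the ``adding a general line'' lemma for the twisted normal sheaf, and the smoothability of a nodal curve with vanishing $h^1$ of its normal sheaf (with a smooth general deformation) --- both of which are exactly the tools already in play in Theorems~\ref{i2} and~\ref{i3}, so the deduction should be short. The only thing needing care is that at each stage the point $P_{j+1}$ be generic with respect to all the open conditions then in force; since these cut out open loci, this is harmless and the induction goes through.
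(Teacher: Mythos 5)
Your argument is correct and is essentially the paper's own proof: the paper likewise applies Theorem \ref{i3} at the degree $\delta$ and then raises the degree by attaching general unisecant lines, using the elementary-transformation/Mayer--Vietoris argument for $h^1(N(-1))=0$, smoothability of the nodal union, and semicontinuity (this is exactly the Claim in Remark \ref{i4}). The only cosmetic difference is that you attach all $d-\delta$ lines as a comb and smooth once, whereas the paper adds one line at a time, smoothing after each step.
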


There are a few natural questions related to the topic of this paper.  Fix positive integers $d, m$.  We recall that if  $d >
m(m-1)$ (the so-called Range C) we have $G(d,m) = 1 + [d(d+m^2-4m) -r(m-r)(m-1)]/2m$, where $r$ is the only integer such that
$0\le r
\le m-1$ and $d+r\equiv 0\pmod{m}$, and equality holds if and only if the curve is linked to a plane curve of degree $r$ by the
complete intersection of a surface of degree $m$ and a surface of degree $d+r$ (\cite{gp1}). For $d\gg m$ we have $G_C(d,m)
\sim d^2/2m$ and hence $G(d,m) =G_C(d,m)\gg G_A(d,m)$. Now assume
$$\frac{m^2+4m+6}{3}\le d   \le m^2-m$$ (the Range B).  R. Hartshorne and A. Hirschowitz proved that $G(d,m) \ge G_B(d,m)$, where $G_B(d,m)$ is a complicated explicit function (\cite[Th. 5.4]{hh1}); the integer $G_B(d,m)-G_A(d,m)$ is clearly described as a sum of two terms in \cite[Th. 5.4]{hh1}. For this, using reflexive sheaves with prescribed Hilbert function (\cite{h2, hi1}), they constructed curves $C$ achieving this bound. Moreover they conjectured that $G(d,m) =G_B(d,m)$ in this range. There are some partials results (\cite{h2},
\cite{e3, h2,es,str1,str2}), but this conjecture is still widely open.

\begin{question}\label{qo1}
Take $d, m$ in the Range B. Is every integer $g$ such that $0\le g \le G_B(d,m)$ the genus of a smooth curve $C\subset \PP^3$ such that $\deg ({C})=d$ and $h^0(\Ii _C(m-1)) =0$?
\end{question}

\begin{remark}
Theorem \ref{i3} says that for the pair $(d,m)$ listed in its statement all the genera up to the maximal one are
realized. In the range C  for the pairs $(d,m)$ very near to the maximum genus $G(d,m)$ there are well-known gaps (called
Halphen's gaps) for the genera of degree $d$ smooth space curve $C$ with $h^0(\Ii _C(m-1)) =0$ (\cite{e2}, \cite[Theorem
3.3]{d}). If we use the unknown integer $G(d,m)$ instead of $G_B(d,m)$, then Question \ref{qo1} asks a proof of the
non-existence of Halphen's gaps in the Range B. We also conjecture that Halphen's gaps do not arise in the Range A.
We conjecture that Question \ref{qo1} has a positive answer, except at most for a tiny set of pairs $(d,m)$ near the Range
C. For the Range C we conjecture that all $(d,g,m)$ with $(d,m)$ in the Range C and $0\le g\le G_C(d,m+1)$ are realized by
some smooth curve $C\subset \PP^3$.
\end{remark}

We explained before and after Theorem \ref{i2} the geometric properties (Hilbert function, Hilbert function of its general hyperplane section and the smoothness of the Hilbert scheme)
satisfied by any curve $C$ as in the statement of Theorem \ref{i2}. All curves $C$ realizing $G(d,s)$ in the range C are
arithmetically Cohen-Macaulay and in particular they have maximal rank and the Hilbert scheme $\mathrm{Hilb}(\PP^3)$ is
smooth at $[C]$ of known dimension $h^0(N_C)$ (\cite{el}). In almost all cases $h^0(N_C) >4d$. In
general we may ask:

\begin{question} Is it true that any curve, $C$, of degree $d$, genus $G(d,s)$, with $h^0(\Ii _C(s-1))=0$, is of maximal rank and is a smooth point of the Hilbert
scheme~?

For given $d,s$, do these curves belong to the same irreducible component of the Hilbert scheme~?
\end{question}

See \cite{d2} for partial results.
In section \ref{Srod} we briefly describe the proof, describe the main novelties of this paper, e.g. the difference with \cite{bef} and the main numerical obstructions arising for small $m$. The
proofs of our theorems use an inductive proof, called the Horace Method, first used by A. Hirschowitz (\cite{hi}).

We thank the referee for many useful comments and suggestions.

\section{A roadmap}\label{Srod}
In section \ref{Spr} we describes curves $C_t \subset \PP^3$ and the union $C_{t,k}$, $t\ge k$, of a curve $C_t$ and a curve $C_k$. Their main properties is that $h^i(\Ii _{C_{k,t}}(k+t-1)) =0$ for all $i\ge 0$. Curves $C_t$ are used in \cite{bbem,e,fl1,fl2}. The curves $C_{t,t}$ and $C_{t,t-1}$ are the starting point of the inductive proof of the existence of curves with maximal rank given in \cite{bef} for ranges of pairs $(d,g)$ with $g$ of order $d^{3/2}$, i.e. very far from the Brill-Noether range. In the present paper we use all curves $C_{t,k}$ with, say, $t\le 200k$. In Example \ref{exk1} we explain why a key inductive step  does not work if, for a fixed $k$, we try to use the curves $C_{t,k}$ with $t\gg k$. 

Then we add inductively non-special curves in the following way. For all integers $s\ge k+t+1$ such that $s\equiv k+t-1
\pmod{2}$ we construct a non-special curve $Y_s\subset \PP^3$ with a certain degree $a(s,t,k)$ and genus $g(s,t,k)$ such that
$Y_s\cap C_{t,k} =\emptyset$ and $h^i(\Ii _{C_{t,k}\cup Y_s}(s)) =0$ for all $i\ge 0$. Setting $g_t:= h^1(\Oo _{C_t})$ and
$g_{t,k}:= g_t+g_k$ the smooth curve $C_{t,k}\cup Y_s$ has $3$ connected components and $h^1(\Oo _{C_{t,k}\cup Y}) = g_{t,k}
+g(s,t,k)$. Since $\lim _{s\to +\infty} g(s,t,k) =+\infty$ (Lemma \ref{oov2}), for each fixed genus $g$ we may find $s\gg 0$
with $g_{t,k}+g(s,t,k)>g$. 

The novel part of this paper are essentially sections  \ref{Sg} and \ref{Se} (or, rather that section \ref{Sg} allows us to conclude the proof of all theorems in section \ref{Se}).
We need to prove the theorems of the introduction for a fixed very large $m$. We fix $m$ and $d$ and take $g:= G_A(d,m)$. For large $d, m$ we may assume that $g\ge g_{1000,1000}$. To be sure that for large $m$
and hence (since we are in the Range A a large $d$) we may take a very large
$g$ ($g$ with order $m^3\gg d$) we use \cite{bef}. We take $t, k $ such that $g_{t,k} \le g \le g_{t,k+2}$ and $t+k \equiv m
\pmod{2}$. Then we get a maximal integer $y\equiv t+k-1 \pmod{2}$ such that $g_{t,k}+g(y,t,k) \le g$. A key numerical lemma is
that for a large
$m$ we have $y\le m-7$ (Lemma \ref{ov9}). Then for $x\ge y+2$ and $x\equiv y \pmod{2}$ we get a smooth connected curve $X_x$
such that $X_x\cap C_{t,k} = \emptyset$, $p_a(X_sg)+g_{t,k} =g$ and $h^1(\Ii _{C_{t,k}\cup X_x}(x))=0$  (Assertion $B(x,t,k)$ and its proof in Lemmas \ref{ov12} and \ref{ov11}).  We use a modification $B'(m-3,t,k)$ of
$B(m-3,t,k)$ to get in one step a smooth and connected curve $X$ such that $h^i(\Ii _X(m-1)) =0$ for
all $i$ and $X$ has degree $d$ and genus $g = G_A(d,m)$. Small modifications of this last step give all theorems stated in the
introduction.

In the construction we allow a parameter $\alpha$ and so the reader will met $a(s,t,k)_\alpha$ and $g(s,t,k)_\alpha$. From section \ref{Sg} on we just take $\alpha:= 202$
and the integers $a(s,t,k)$ and $g(s,t,k)$ are just the integers $a(s,t,k)_\alpha$ and $g(s,t,k)_\alpha$ with $\alpha =202$.

\section{Preliminaries}\label{Spr}
We work over an algebraically closed base field with characteristic zero. By \cite[Corollary 2.4]{bbem} we only look at $(d,m)$ in the Range A and with $d< (m^2+4m+6)/4$.

For any $o\in \PP^3$ let $2o$ denote the zero-dimensional subscheme of $\PP^3$ with $(\Ii _o)^2$ as its ideal sheaf. The scheme $2o\subset \PP^3$ is a zero-dimensional
scheme with $\deg (2o) =4$ and $2o_{\red}= \{o\}$.
Let $Q\subset \PP^3$ be a smooth quadric. For any scheme $X\subset \PP^3$ the residual scheme $\Res _Q(X)$ of $X$ with respect to $Q$ is the closed subscheme
of $\PP^3$ with $\Ii _X:\Ii _Q$ as its ideal sheaf. For any $t\in \ZZ$ we have an exact sequence (often called the residual sequence of $X$ and $Q$):
$$0\to \Ii _{\Res _Q(X)}(t-2)\to \Ii _X(t)\to \Ii _{X\cap Q,Q}(t)\to 0$$
Now assume $o\in X_{\red}\cap Q$ and the existence of a neighborhood $\Uu$ of $o$ in $\PP^3$ such that $X\cap \Uu = 2o\cup T$ with $T$ a nodal curve, $T\subset Q$,
and $o$ a singular point of $T$. Then $\Res _Q(X)\cap \Uu = \{o\}$. See \cite{hi} for many pictures, which explain how to use the Horace lemma with respect to
$Q$. We do a similar proof in several key lemmas (with $T$ a union of  $e\le 202$ lines in a ruling of $Q$ and $\delta
-e$ lines in the other ruling of $Q$).

Fix an integer $t>0$. Let $C_t \subset \mathbb {P}^3$ denote any curve fitting in an exact sequence
\begin{equation}\label{eqa6}
0 \to t\mathcal {O} _{\mathbb {P}^3}(-t-1) \to (t+1)\mathcal {O} _{\mathbb {P}^3}(-t) \to \mathcal {I} _{C_t} \to 0
\end{equation}
(\cite{e}). Each $C_t$ is arithmetically Cohen-Macaulay and in particular $h^0(\mathcal {O} _{C_t})=1$. By taking the Hilbert function in (\ref{eqa6}) we
get $\deg (C_t) = t(t+1)/2$, $p_a(C_t) = 1+ t(t+1)(2t-5)/6$, $h^1(\Oo _{C_t}(t-2)) >0$ and $h^1(\Oo _C(t-1)) =0$. Hence $h^i(\mathcal {I}_{C_t}(t-1)) =0$, $i=0,1,2$. Note that $p_a(C_t)=1+\deg (C_t)(t-1)-\binom{t+2}{3} $.
The set of all curves $C_t$ fitting in (\ref{eqa6}) is an irreducible variety and its general member is smooth and connected.
Among them there are the stick-figures called $\mathbf{K}_t$ in \cite{fl1}, \cite{fl2} and \cite{bbem} (see \cite[Lemma 2.11]{fl1}), but we only use the smooth $C_t$. Since $h^1(N_{C_t}(-2)) =0$ (\cite[proof of Proposition 2]{e}, \cite{fl1}, \cite[page 4592]{bbem}), 
for a general $S\subset Q$ with $\sharp (S)=t(t+1)$ we may find $C_t$ with $C_t\cap Q=S$ (\cite[Theorem 1.5]{pe}).
Set $C_{t,0}:= C_t$. For all positive integers $t,k$ let $C_{t,k}$ denote any disjoint union of a curve $C_t$ and a curve $C_k$. If $t>0$ and $k>0$,
then $\deg (C_{t,k}) =t(t+1)/2 +k(k+1)/2$, $h^0(\Oo _{C_{t,k})}=2$ and $h^1(\Oo _{C_{t,k}}) =2+t(t+1)(2t-5)/6 +k(k+1)(2k-5)/6$. Since $h^1(N_{C_{t,k}}(-2)) =0$,
for a general $S\subset Q$ with $\sharp (S)=t(t+1)+k(k+1)$ there is some $C_{t,k}$ with $C_{t,k}\cap Q = S$. Equivalently, for a general $C_{t,k}$
the set $C_{t,k}\cap Q$ is general in $Q$. We have $h^i(\Ii _{C_{t,k}}(t+k-1)) =0$ (\cite{bef}). For reader's sake we reproduce the statement and the proof from \cite{bef}.

\begin{lemma}\label{b1+++++}
(\cite[Lemma 2.1]{bef}) We have $h^i(\mathcal {I} _{C_{t,k}}(t+k-1)) =0$, $i=0,1,2$.
\end{lemma}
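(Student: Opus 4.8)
The plan is to prove $h^i(\mathcal{I}_{C_{t,k}}(t+k-1)) = 0$ for $i=0,1,2$ by residuation with respect to a smooth quadric $Q$, using the curves $C_t$ and $C_k$ whose cohomology in degrees $t-1$ and $k-1$ is already under control. Assume $t \ge k$. First I would arrange the configuration so that $C_k$ lies on $Q$ (which is possible since, up to projective equivalence, $C_k$ can be taken to be a stick-figure $\mathbf{K}_k$ contained in $Q$, or one can deform a $C_k$ into $Q$), while $C_t$ is a general curve of its type disjoint from $C_k$ and meeting $Q$ transversally in a general set $S := C_t \cap Q$ of $t(t+1)$ points; this last genericity is exactly what the quoted fact $h^1(N_{C_t}(-2)) = 0$ together with \cite[Theorem 1.5]{pe} provides. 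Then $C_{t,k} = C_t \sqcup C_k$, its intersection with $Q$ is $(C_t \cap Q) \cup C_k = S \cup C_k$, and the residual $\mathrm{Res}_Q(C_{t,k})$ is just $C_t$ (since $C_k \subset Q$ is absorbed and $C_t$ meets $Q$ transversally, so removing one copy of $Q$ leaves $C_t$ untouched).

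The residual exact sequence in degree $s := t+k-1$ reads
\begin{equation*}
0 \to \mathcal{I}_{C_t}(t+k-3) \to \mathcal{I}_{C_{t,k}}(t+k-1) \to \mathcal{I}_{S \cup C_k, Q}(t+k-1) \to 0 .
\end{equation*}
For the left-hand term: $C_t$ is arithmetically Cohen–Macaulay with $h^1(\mathcal{I}_{C_t}(j)) = 0$ for $j \ge t-1$, and $t+k-3 \ge t-1$ since $k \ge 2$ (the case $k=1$, $k=0$ handled separately using the same sequence with the cruder bound, or directly), so $h^1(\mathcal{I}_{C_t}(t+k-3)) = 0$; likewise $h^2(\mathcal{I}_{C_t}(t+k-3)) = 0$ because $C_t$ is aCM (so $h^2$ of its twisted ideal sheaf vanishes in all degrees $\ge$ something well below $t-1$), and $h^0(\mathcal{I}_{C_t}(t+k-3))$ need not vanish but that is fine — what I actually need from the left term for the $i \ge 1$ statement is only the vanishing of $h^1$ and $h^2$. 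For the right-hand term I work on $Q \cong \mathbb{P}^1 \times \mathbb{P}^1$: the sheaf $\mathcal{I}_{C_k \cup S, Q}(t+k-1)$ restricted to $Q$ has bidegree determined by $\mathcal{O}_Q(t+k-1) = \mathcal{O}_Q(t+k-1, t+k-1)$ minus the class of $C_k$ (a curve of bidegree $(a,b)$ with $a+b$ and $ab$ read off from $\deg C_k = k(k+1)/2$ and $p_a(C_k)$) minus the $t(t+1)$ general points of $S$. I must check that this twisted ideal sheaf on $Q$ has no $h^1$ and no $h^2$; $h^2$ vanishes because the line bundle still has nonnegative bidegree, and $h^1$ vanishes because $S$ is a \emph{general} set of the right cardinality — this is where the general position of $S \subset Q$ is used, via the known fact that general points on $Q$ impose independent conditions on $|\mathcal{O}_Q(a,b)|$ as long as their number does not exceed $h^0$, and one checks the numerology $t(t+1) \le h^0(\mathcal{I}_{C_k,Q}(t+k-1))$ works out with equality or near-equality, forcing both $h^0$ and $h^1$ of the right term to vanish simultaneously.

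Putting the two ends together in the long exact cohomology sequence: $h^1(\mathcal{I}_{C_{t,k}}(t+k-1))$ is squeezed between $h^1(\mathcal{I}_{C_t}(t+k-3)) = 0$ and $h^1(\mathcal{I}_{C_k \cup S,Q}(t+k-1)) = 0$, hence is $0$; similarly $h^2$ is squeezed between two vanishing groups and is $0$; and $h^0(\mathcal{I}_{C_{t,k}}(t+k-1))$ sits between $h^0(\mathcal{I}_{C_t}(t+k-3))$ — which I must show is $0$, i.e. $C_t$ lies on no surface of degree $t+k-3$, true because $C_t$ is aCM with initial degree $t$ and $t+k-3 < t$ would be needed, so actually when $k \ge 3$ this term can be nonzero and I instead get $h^0$ vanishing from a separate count — and $h^0(\mathcal{I}_{C_k \cup S, Q}(t+k-1)) = 0$. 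The cleanest route for the $h^0 = 0$ statement is the dimension count: $h^0(\mathcal{I}_{C_{t,k}}(t+k-1)) - h^1 = \binom{t+k+2}{3} - \deg(C_{t,k})(t+k-1) + p_a(C_{t,k}) - 1 = \binom{t+k+2}{3} - \chi(\mathcal{O}_{C_{t,k}}(t+k-1))$, and plugging in $\deg(C_{t,k}) = t(t+1)/2 + k(k+1)/2$ and $p_a(C_{t,k}) = 2 + t(t+1)(2t-5)/6 + k(k+1)(2k-5)/6$ one verifies this Euler characteristic is exactly $0$; combined with $h^1 = 0$ already shown, this forces $h^0 = 0$ too. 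The main obstacle is the bookkeeping on $Q$ — getting the bidegree of $C_k \subset Q$ right and confirming that the number $t(t+1)$ of general points of $S$ matches $h^0(\mathcal{I}_{C_k,Q}(t+k-1))$ exactly, so that generality yields simultaneous vanishing of $h^0$ and $h^1$ on $Q$; everything else is the standard Horace/residuation machinery and an Euler-characteristic check. Since this is merely reproduced from \cite[Lemma 2.1]{bef}, I would in practice just cite that proof, but the above is the argument it runs.
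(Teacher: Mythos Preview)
Your approach is genuinely different from the paper's, and it has a real gap. The paper's proof (reproduced verbatim from \cite{bef}) does \emph{not} use residuation on a quadric at all: it tensors the locally free resolution
\[
0 \to t\,\mathcal{O}_{\PP^3}(-t-1) \to (t+1)\,\mathcal{O}_{\PP^3}(-t) \to \mathcal{I}_{C_t} \to 0
\]
by $\mathcal{I}_{C_k}(t+k-1)$. Disjointness of $C_t$ and $C_k$ kills the Tor, so one obtains the short exact sequence
\[
0 \to t\,\mathcal{I}_{C_k}(k-2) \to (t+1)\,\mathcal{I}_{C_k}(k-1) \to \mathcal{I}_{C_{t,k}}(t+k-1) \to 0,
\]
and then everything reduces to the already-known vanishings $h^0(\mathcal{I}_{C_k}(k-1))=h^1(\mathcal{I}_{C_k}(k-1))=h^1(\mathcal{I}_{C_k}(k-2))=0$ (aCM) together with $h^2(\mathcal{I}_{C_k}(k-2))=h^2(\mathcal{I}_{C_k}(k-1))=0$. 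No geometry on $Q$, no genericity of points, no bidegree bookkeeping.

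Your residuation strategy, by contrast, rests on the claim that ``$C_k$ can be taken to be a stick-figure $\mathbf{K}_k$ contained in $Q$, or one can deform a $C_k$ into $Q$''. This is false for every $k\ge 3$: from the resolution of $\mathcal{I}_{C_k}$ one reads off $h^0(\mathcal{I}_{C_k}(2))=0$, so \emph{no} curve with that Hilbert function---smooth $C_k$ or stick-figure $\mathbf{K}_k$ alike---lies on any quadric surface. (Equivalently, a reduced curve on $Q$ of bidegree $(a,b)$ has $p_a=(a-1)(b-1)$; for $k=4$ one needs $a+b=10$ and $p_a=11$, which no bidegree achieves.) So the residual sequence you wrote down cannot be set up for $k\ge 3$, and your ``main obstacle --- the bookkeeping on $Q$'' is not bookkeeping but an actual obstruction. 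The Euler-characteristic salvage you propose for $h^0$ would be fine if the $h^1$ and $h^2$ vanishing were in hand, but those vanishings themselves rely on the same broken premise.
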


\begin{proof}
(\cite[Lemma 2.1]{bef}) Since $C_t\cap C_k =\emptyset$, we have $Tor^{1}_{{\mathcal {O} _{\mathbb {P}^3}}}(\mathcal {I}_{C_t},\mathcal {I} _{C_k})=0$ and $\mathcal {I} _{C_t}\otimes \mathcal {I} _{C_k} = \mathcal {I}_{C_{t,k}}$.
Therefore tensoring (\ref{eqa6}) with $\mathcal {I} _{C_k}(t+k-1)$ we get
\begin{equation}\label{eqa7++++}
0 \to t\mathcal {I} _{C_k}(k-2) \to (t+1)\mathcal {I} _{C_k}(k-1)\to \mathcal {I} _{C_{t,k}}(t+k-1)\to 0
\end{equation}
We have $h^2(\mathcal {I} _{C_k}(x)) = h^1(\mathcal {O} _{C_k}(x))$ and the latter integer is zero for all integers $x\ge k-2$, because $k-3$ is the maximal integer $z$ with
$h^1(\Oo _{C_k}(z)) >0$. We have $h^1(\mathcal {I} _{C_k}(k-1))=0$, because $C_k$
is arithmetically Cohen-Macaulay. Taking $k$ instead of $t$ in (\ref{eqa6}) we get $h^0(\mathcal {I} _{C_k}(k-1)) =0$. Hence (\ref{eqa7++++}) gives
$h^i(\mathcal {I} _{C_{t,k}}(t+k-1)) =0$, $i=0,1,2$.
\end{proof} 

If $t>0$ and $k>0$ set $g_t:= p_a(C_t) =1+ t(t+1)(2t-5)/6$, $g_{t,k}:= g_t+g_k = h^1(\Oo _{C_{t,k}})$ and $d_{t,k}:= \deg (C_{t,k}) =t(t+1)+k(k+1)$.

 \begin{remark}\label{oo1}
 Let $D_0\subset \PP^3$ be a smooth curve such that $h^1(\Oo _{D_0}(1)) =0$. Fix distinct lines $D_i$, $1\le i\le k$, such that $D_0\cup D_1\cup \cdots \cup D_k$
 is nodal and $1\le \sharp ((D_0\cup \cdots \cup D_{i-1})\cap D_i)\le 2$ for each $i=1,\dots ,k$. Then $D_0\cup \cdots \cup D_k$ is smoothable (\cite{hh}, \cite{s}).
 A Mayer-Vietoris exact sequence gives $h^1(\Oo _{D_0\cup \cdots \cup D_k}(1)) =0$ and hence $D_0\cup \cdots \cup D_k$ may be deformed to a non-special smooth curve.
 \end{remark}

We fix integers $m\ge 5$, $\alpha >0$ and take positive integers $t, k$ such that $t+k\equiv m\pmod{2}$ and $t\ge k$. For all integers $s\ge t+k-1$ with $s \equiv t+k-1 \pmod{2}$ we define
the integers $a(s,t,k)_\alpha$, $b(s,t,k)_\alpha$ and $g(s,t,k)_\alpha$ in the following way.
Set $a(t+k-1,t,k)_\alpha = b(t+k-1,t,k)_\alpha=g(t+k-1,t,k)_\alpha =0$. Define the integers $a(t+k+1,t,k)_\alpha$ and $b(t+k+1,t,k)_\alpha$
 by the relations
 \begin{align}\label{eqo4}
 &(t+k+1)(d_{t,k}+a(t+k+1,t,k)_\alpha) +3 -g_{t,k} \notag \\
 &+b(t+k+1,t,k)_\alpha =\binom{t+k+4}{3}, \ 0\le b(t+k+1,t,k)_\alpha \le t+k
 \end{align}
 
 Set $g(t+k+1,t,k)_\alpha:= 0$. Hence if $s\in \{t+k-1,t+k+1\}$ the integers $a(s,t,k)_\alpha$, $b(s,t,k)_\alpha$ and $g(s,t,k)_\alpha$ do not depend on $\alpha$. Fix an integer $s\ge t+k+3$ with $s\equiv t+k+1\pmod{2}$ and assume defined the integers $a(s-2,t,k)_\alpha$, $b(s-2,t,k)_\alpha$ and $g(s-2,t,k)_\alpha$.
 Define
 the integers $a(s,t,k)_\alpha$ and $b(s,t,k)_\alpha$ by the relations
 \begin{align}\label{eqo4.1}
 &2(d_{t,k} + a(s-2,t,k)_\alpha) + (s-1)(a(s,t,k)_\alpha -a(s-2,t,k)_\alpha) +\alpha+ \notag \\
 & +b(s,t,k)_\alpha -b(s-2,t,k)_\alpha =(s+1)^2, \ 0\le b(s,t,k) \le s-2 
 \end{align}
 Set $g(s,t,k)_\alpha = g(s-2,t,k)_\alpha + a(s,t,k)_\alpha -a(s-2,t,k)_\alpha-\alpha$. We claim that
 \begin{equation}\label{eqo5}
s(d_{t,k} +a(s,t,k)_\alpha)+3 -g_{t,k} -g(s,t,k)_\alpha 
+b(s,t,k)_\alpha =\binom{s+3}{3}
 \end{equation}
 To prove the claim use induction on $s$; add the equation in (\ref{eqo4.1}) to the case $s'= s-2$ of (\ref{eqo5}); start the inductive assumption with the case $s=t+k+1$ true by (\ref{eqo4}), (\ref{eqoov1}) and (\ref{eqoov0})). We have $b(t+k+1,t,k)_\alpha \le t+k$, $b(s,t,k)_\alpha \le s-2$ if $s\ge t+k+3$ and $g(s,t,k)_\alpha = a(s,t,k)_\alpha -a(t+k+1,t,k)_\alpha -\alpha(s-t-k-1)/2$ for all $s\ge t+k+3$. 
 
 Later (from Lemma \ref{ov7} on),
 we will assume $\alpha =202$ and write $a(s,t,k)$, $b(s,t,k)$ and $g(s,t,k)$ instead
 of $a(s,t,k)_{202}$, $b(s,t,k)_{202}$ and $g(s,t,k)_{\alpha}$.

 All our constructions work for any $\alpha \ge 105$, but we would get far worst bounds in some key numerical lemma
 taking $\alpha =105$. The culprits are the numerical lemmas, which give upper bounds for a certain integer $e$. We always need to take $\alpha \ge e+1$.
 To get $e\le 104$ (resp. $e\le 201$) in Lemmas \ref{oov2.1} and  \ref{oov4} (resp. Lemmas \ref{oov2.1=} and \ref{oov4=}) we need $t+k\ge 1113636$ and $s \ge 1157520$ (resp. $t+k \ge  42040$ and $s \ge 42674$). Taking $\alpha =202$ we get far better bounds for all results stated in the introduction (e.g. if we used $\alpha =105$ in Theorem \ref{i1} we would need roughly $m\ge 5\cdot 10^6$), but it is certainly not an optimal
 choice. It is not necessary to quote \cite{bef} to obtain our results (e.g. a very weak form of \cite{be2} would suffice), but the bounds would be far worse (roughly you square
 all lower bounds assumptions); here the culprit is the last part of the proof of Theorem \ref{i2}.
 
  \begin{remark}\label{ov8}
 Under suitable assumptions on $t$, $k$ and $\alpha$ we have $g(s,t,k)_\alpha\ge 26$ for
 all $s\ge t+k+3$ with $s\equiv t+k-1 \pmod{2}$ (Lemma \ref{oov6}). Hence a general curve $Y$ of genus $g(s,t,k)_\alpha$ and degree $a(s,t,k)_\alpha$ satisfies $h^1(N_Y(-2)) =0$ (\cite[page 67, inequality $DP (g) \le g + 3$]{pe}). We have $g(t+k+1,t,k)_\alpha =0$. A general smooth rational space curve $T$ of degree $x\ge 3$
has balanced normal bundle, i.e. $N_T$ is a direct sum of two line bundles of degree $2x-1$ (\cite[Proposition 6]{ev}), i.e. $h^1(N_T(-2)) =0$. Fix integers $q$ and $z \ge q+3$ such that a general non-special smooth curve $C\subset \PP^3$ of degree $z$ and genus $q$ satisfies $h^1(N_C(-2)) =0$ and
 hence (since $\chi (N_C(-2)) =0$) $h^0(N_C(-2)) =0$. Fix a smooth quadric $Q\subset \PP^3$ and take a general $B\subset Q$ with $\sharp (B)=2z$. By \cite[Theorem 5.12]{pe} there
 is a non-special smooth curve $Y\subset \PP^3$ of degree $z$ and genus $q$ such that $B = Y\cap Q$. The assumption ``$g(s,t,k)_\alpha\ge 26$ '' was the assumption made
 in \cite{bef}. After \cite{bef} was completed E. Larson proved a far better result (\cite[Theorem 1.4]{l}, \cite[Theorem 1.4]{v}). Using it we could get slightly weaker numerical assumptions in all results in \cite{bef} (and hence in the results of this paper), but without lowering the bounds in the assumption by an order of magnitude.
 \end{remark}

\begin{remark}\label{3i1}
Lemma \ref{u3.0} shows that to carry over all our steps we cannot have $t\gg k$, say we need $t\le 200k$. Lemma \ref{u3} shows that to carry over the last few steps we also need
$t\ge 30k$.
\end{remark}

\section{Assertions $A(s,t,k)_\alpha$ and $A(s,t,k)$}\label{Sa}

 As in section \ref{Spr} we fix an integer $m\ge 5$, a positive integer $\alpha$ and take positive integers $t, k$ such that $t+k\equiv m\pmod{2}$.  
 
For any integer $s\ge t+k+1$ with $s\equiv t+k+1 \pmod{2}$ we define the following Assertion $A(s,t,k)_\alpha$ (we use the lemmas in  the next section  to make sense of it). We will prove $A(s,t,k)_\alpha$ for the quadruples $(\alpha,s,t,k)$ we need in section \ref{SprA}.

 Let $Q$ be a smooth quadric and $W\subset \PP^3$ a reduced curve such that $Q\cap W$ is formed by $2\deg (W)$ points and no line of $Q$ contains two or more points of $W$.
Let $E\subset Q$ be a finite set. Take positive integers $a$, $b$. An \emph{$(a,b)$-grid of $Q$ adapted to $(W,E)$} is a union $T\subset Q$ of $a+b$ distinct lines of $Q$, $a$ of them of bidegree $(1,0)$, $b$ of them of bidegree $(0,1)$, each line of $T$ contains a point of $W\cap Q$, no two lines of $T$ contain the same point  of $W\cap Q$ and $T\cap E=\emptyset$. Obviously the existence on an $(a,b)$-grid for $(W,E)$ implies $2\deg (W) \ge a+b$. We have $\mathrm{Sing}(T) =(a-1)(b-1)$. Since no two lines of $T$ contain the same point of $W\cap Q$, we have $\mathrm{Sing}(T) \cap W =\emptyset$.

\quad {\bf {Assertion}} $A(s,t,k)_\alpha$, $s\ge t+k+1$ with $s\equiv t+k+1 \pmod{2}$: Set $\delta := a(s+2,t,k)_\alpha
-a(s,t,k)_\alpha$. Let $e$ be the maximal positive integer such that $b(s,t,k)_\alpha > (e-1)(\delta-e-1)$ and $e\le \delta
/2$. Let $Q$ be a smooth quadric. Fix $C_{t,k}$ intersecting transversally $Q$ and such that $Q\cap C_{t,k}$ is formed by
$2d_{t,k}$ general
 points of $Q$. We call $A(s,t,k)_\alpha$ the existence of a triple $(Y,T,S)$ with the following properties:
 \begin{enumerate}
 \item $Y$ is a smooth and connected curve of degree $a(s,t,k)_\alpha$ and genus $g(s,t,k)_\alpha$ such
 that $Y\cap C_{t,k}=\emptyset$, $Y$ intersects transversally $Q$ and $(C_{t,k}\cup Y) \cap Q$ is general in $Q$. 
 \item $T$ is a grid of type $(e,\delta -e)$ for $(Y,E)$, where $E:= C_{t,k}\cap Q$. $S\subset \mathrm{Sing}(T)$.
 \item $h^i(\Ii _{Y\cup C_{t,k}\cup S}(s)) =0$, $i=0,1$. 
 \end{enumerate}
 
 In the next section we collect the numerical lemmas needed to prove $A(s,t,k)_\alpha$ for certain quadruples $(\alpha,s,t,k)$.
 
\medskip 
The proof is by induction on $s$. We may give a rough outline with the following pictures.

\begin{center}
\scalebox{.7}{\includegraphics{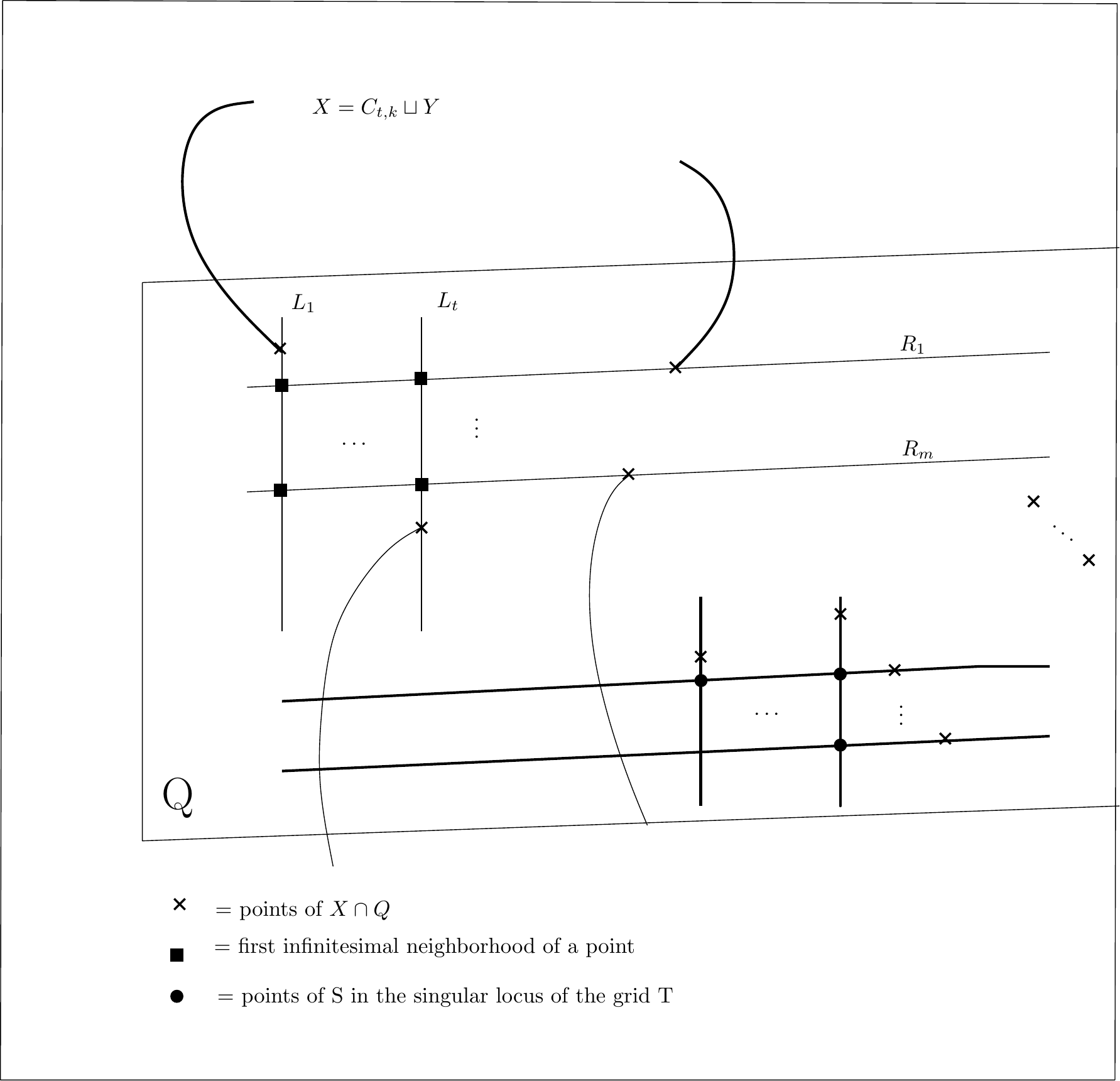}}
\end{center}
We consider $Z = X \cup (\cup _iL_i) \cup (\cup_jR_j)\cup \chi \cup S$. Here $\chi$ denotes the set of nilpotents (pictured with a black square). Assume $F$ is a form of degree $s+2$ vanishing on $Z$. We look at $F|Q$: it vanishes on $D = (\cup _iL_i) \cup (\cup_jR_j)$, on the points of $X\cap (Q\setminus D)$ and on the points of $S$. We check that this implies $F|Q=0$. It follows that $F=QG$, where $G$ is a form of degree $s$ vanishing on the residual scheme $Res_Q(Z)$. This residual scheme is:
\begin{center}
\scalebox{.7}{\includegraphics{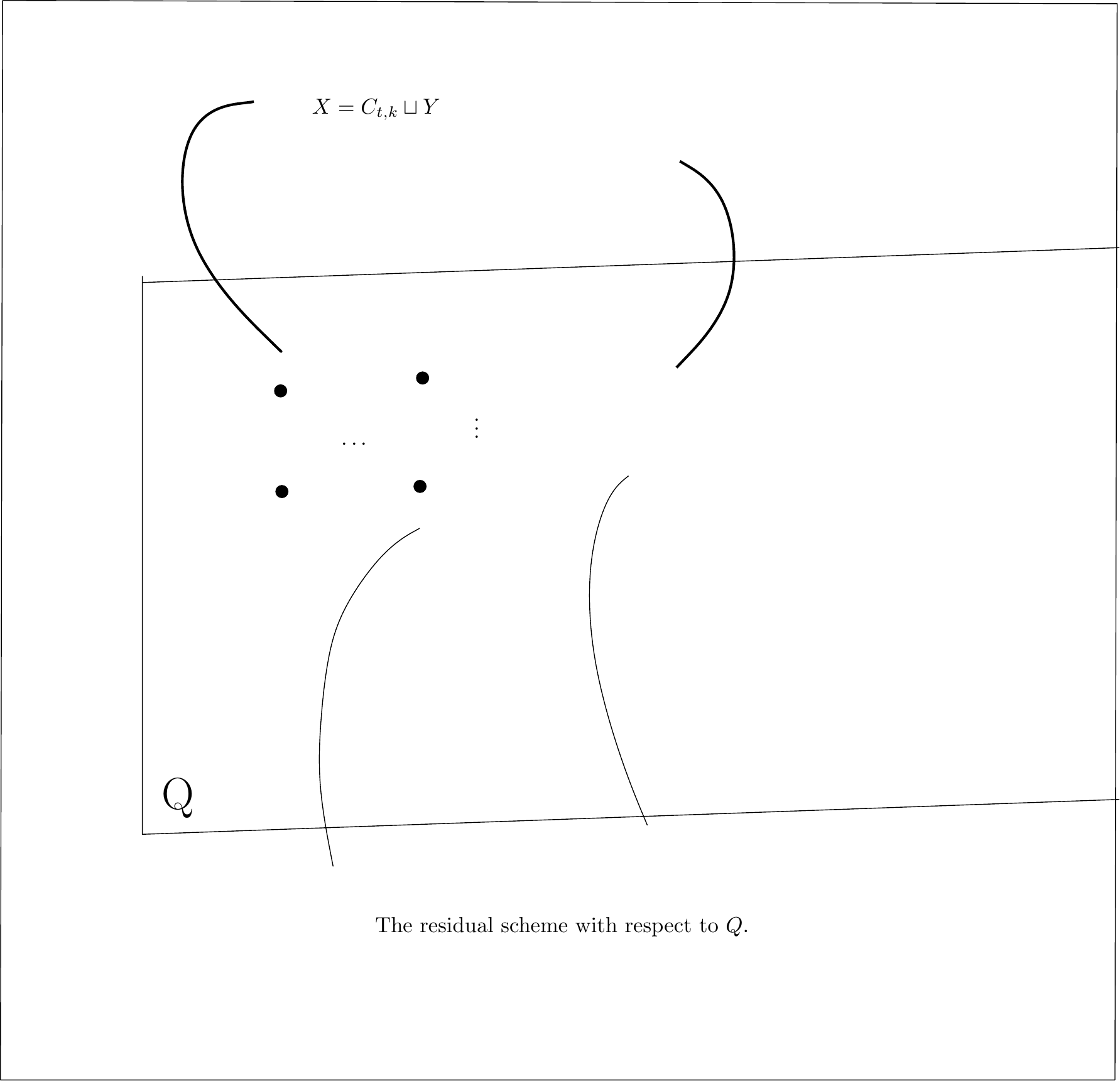}}
\end{center}
Observe that the set of points $\chi _{red}$ lies on a grid (build by the lines $L_i, R_j$) on $Q$. Since we have taken $X \cup \chi _{red}$ satisfying $A(s,t,k)$, we get $G=0$. Hence $F=0$. Then we show that $X \cup (\cup _iL_i) \cup (\cup_jR_j)\cup \chi$ is smoothable.  By semi-continuity this shows $A(s,t,k) \Rightarrow A(s+2,t,k)$.  

\medskip

We explain here the numerical restriction needed just to know that $A(s,t,k)_\alpha$ is well-defined (e.g. we need $\delta \ge
0$ and if
$b(s,t,k)_\alpha > 0$ we also need $e\le \delta/2$.
 
 \begin{enumerate}
 \item For $s\ge t+k+3$ we need the existence of a non-special curve $Y$ with degree $d(s,t,k)_\alpha$ and genus
$g(s,t,k)_\alpha$ such that $Y\cap Q$ is formed by $2a(s,t,k)_\alpha$ general points of $Q$. To get this property it is
sufficient to use that
$g(s,t,k)_\alpha \ge 26$ (Remark \ref{ov8}) and quote \cite[page 67, inequality $DP (g) \le g + 3$]{pe}). Since $(Y\cup
C_{t,k}\cap Q$
 is general in $Q$, no two points of it are contained in the same line of $Q$. 
 \item We need $\delta \ge 0$ and this is true by Lemma \ref{oov2}.
  \item When $b(s,t,k)_\alpha>0$ we need $e\le \delta/2$. For this we use Lemmas \ref{oov2},  \ref{oov2.1} and \ref{oov2.1=}.
  \item We need $2a(s,t,k)_\alpha \ge \delta$ to get a degree $\delta$ grid $T\subset Q$ such that each irreducible component
of $T$ contains a point of $T\cap Q$.
 \end{enumerate}
 
 Now we explain with examples why we need lower bounds on $k$ to have a chance that $A(s,t,k)_\alpha$ is true when $s-t-k$ is small. In Lemma
\ref{u3.0} we will show that it is important that $t$ is not too large with respect to $k$.
 
 \begin{example}\label{exk1}
 First take $k=1$. We have $d_{t,1} =d_t+2$ and $g_{t,1} =g_t$. Since $td_{t,1}+2-g_{t,1} =\binom{t+3}{3}$ and $\binom{t+5}{3}-\binom{t+3}{3} =(t+1)^2$,
(\ref{eqo4}) gives
 $2d_{t,1} +(t+2)a(t+2,t,1)_\alpha +b(t+2,t,1)_\alpha) = (t+3)^2$ and hence $a(t+2,t,1)_\alpha\le 5$, which is not enough to
attach any grid  to $Y\cap Q$ when $\deg (Y) =a(t+2,t,1)_\alpha$.
 
 Now take $k$ arbitrary. We show that if $t\gg k$ we cannot find a grid $T\subset Q$ with $\deg (T) =\delta$ and with $\sharp (\mathrm{Sing}(T)) \ge s-2$ and thus if $b(s,t,k)$ is large there is no $S\subseteq \mathrm{Sing}(T)$ with $\sharp (S) =b(s,t,k)$. The maximum integer $\sharp (\mathrm{Sing}(T))$ for grids of bidegree $(e,\delta -e)$
 is $\lfloor \delta/2\rfloor \lceil \delta/2\rceil$.
 We take $s=t+k+3$ and any fixed $\alpha$. For $t\gg k$ we get $a(t+k+1,t,k)_\alpha\sim 4k$ and $\delta:=
a(t+k+3,t,k)_\alpha-a(t+k+1,t,k)_\alpha
\sim 8k \ll s$.  There are other numerical problems if $t\gg k$, most of them arising in the part  in which $g$ is used (Lemma
\ref{u3.0}).
 \end{example}

 \section{Numerical lemmas, I}\label{SnI}

For any integer $s\ge t+k-1$ set $I_{t,k}(s):= h^0(\Ii _{C_{t,k}}(s))$. Since $\deg (C_{t,k})=d_{t,k}$, $h^1(\Oo _{C_{t,k}})=g_{t,k}$ and $h^1(\Ii _{C_{t,k}}(s)) =0$ for $s\ge t+k-1$, we have
\begin{equation}\label{eqoov1}
I_{t,k}(s) = \frac{(s-t-k+1)}{6}[(s-t-k+3)(s-t-k+2)+3(t+k)(s-t-k+2)+6kt]
\end{equation}
Note that $I_{t,k}(t+k-1)=0$. By (\ref{eqo4}) we have
\begin{equation}\label{eqoov0}
(s-1)a(s,t,k)_\alpha +b(s,t,k)_\alpha = I_{t,k}(s) -4 -\alpha (s-t-k-1)/2
\end{equation}
\begin{lemma}\label{oov1}
We have
$$I_{t,k}(s+2)-I_{t,k}(s) =(s+3)^2-(t^2+k^2+t+k)$$
and if $u\in \NN$ and $s =t+k+1+2u$ then
$$I_{t,k}(s) = \frac{4}{3}u^3+(6+2(t+k))u^2 +(\frac{26}{3} +5(t+k)+2tk)u +4 + 3(t+k)+2kt$$
\end{lemma}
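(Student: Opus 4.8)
Both identities are elementary consequences of the closed formula (\ref{eqoov1}), so the plan is simply to substitute and simplify; it is convenient to abbreviate $v:=s-t-k$, so that (\ref{eqoov1}) becomes
\[
I_{t,k}(s)=\frac{v+1}{6}\bigl[(v+3)(v+2)+3(t+k)(v+2)+6kt\bigr],
\]
and the congruence $s\equiv t+k-1\pmod 2$ just says $v$ is a non-negative odd integer. Nothing here is a genuine obstacle; the only thing to watch is the bookkeeping, in particular that the quantity $t^{2}+k^{2}+t+k=t(t+1)+k(k+1)$ appearing in the statement equals $2\deg(C_{t,k})$, not $\deg(C_{t,k})$.

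For the first identity I would compute $6\bigl(I_{t,k}(s+2)-I_{t,k}(s)\bigr)$ by plugging $v$ and $v+2$ into the bracket above. The difference of the two pure cubics in $v$ is $(v+3)\bigl[(v+5)(v+4)-(v+1)(v+2)\bigr]=6(v+3)^{2}$, the difference of the $(t+k)$-weighted terms is $3(t+k)\bigl[(v+3)(v+4)-(v+1)(v+2)\bigr]=6(t+k)(2v+5)$, and the two $6kt$ terms contribute $6kt\cdot 2=12kt$; dividing by $6$ gives
\[
I_{t,k}(s+2)-I_{t,k}(s)=(v+3)^{2}+(t+k)(2v+5)+2kt .
\]
Since $s+3=(v+3)+(t+k)$, expanding $(s+3)^{2}$ and using $(t+k)^{2}-t^{2}-k^{2}=2kt$ shows the right-hand side equals $(s+3)^{2}-(t^{2}+k^{2}+t+k)$, as asserted. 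One can also argue cohomologically: for $s\ge t+k-1$ we have $h^{1}(\Ii_{C_{t,k}}(s))=0$ by the reasoning of Lemma \ref{b1+++++}, and also $h^{1}(\Oo_{C_{t,k}}(s))=0$ since $\max(t,k)-2$ is the largest twist with nonzero $h^{1}$ on $C_{t,k}$; hence $I_{t,k}(s)=\binom{s+3}{3}-s\deg(C_{t,k})-2+g_{t,k}$, and subtracting the values at $s+2$ and $s$ cancels $g_{t,k}-2$ and leaves $\binom{s+5}{3}-\binom{s+3}{3}-2\deg(C_{t,k})$, which by Pascal's rule equals $\binom{s+4}{2}+\binom{s+3}{2}-(t(t+1)+k(k+1))=(s+3)^{2}-(t^{2}+k^{2}+t+k)$.

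For the second identity I would set $s=t+k+1+2u$ with $u\in\NN$, so that $v=2u+1$ and hence $v+1=2(u+1)$, $v+2=2u+3$, $v+3=2(u+2)$. Substituting into (\ref{eqoov1}) gives $I_{t,k}(s)=\frac{u+1}{3}\bigl[2(u+2)(2u+3)+3(t+k)(2u+3)+6kt\bigr]$, and expanding the bracket into $4u^{2}+(14+6(t+k))u+12+9(t+k)+6kt$, multiplying by $u+1$, and dividing by $3$ term by term produces exactly $\frac{4}{3}u^{3}+(6+2(t+k))u^{2}+(\frac{26}{3}+5(t+k)+2kt)u+(4+3(t+k)+2kt)$. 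As sanity checks, $u=0$ recovers $I_{t,k}(t+k+1)=4+3(t+k)+2kt$, which for $t=k=1$ reads $I_{1,1}(3)=12$, matching the space of cubics through two skew lines; alternatively one could telescope the first identity starting from $I_{t,k}(t+k-1)=0$, though direct substitution is shorter. The only real ``difficulty'' is carrying the algebra out without slips.
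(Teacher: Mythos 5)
Your computations are correct: both substitutions into (\ref{eqoov1}) check out (the difference $(v+3)^2+(t+k)(2v+5)+2kt$ does equal $(s+3)^2-(t^2+k^2+t+k)$ via $(t+k)^2-t^2-k^2=2kt$, and the expansion in $u$ reproduces the stated cubic exactly), and your identification of $t^2+k^2+t+k$ as $2\deg(C_{t,k})$ is the right reading of the paper's notation. Your route differs mildly from the paper's for the first identity: the paper does not expand (\ref{eqoov1}) at all, but restricts to a smooth quadric $Q$ and uses the exact sequence $0\to\Ii_C(s)\to\Ii_C(s+2)\to\Ii_{C\cap Q}(s+2)\to0$ together with $h^i(\Ii_C(s))=0$ for $i=1,2$, $s\ge t+k-1$, so that the difference is read off as $h^0(\Oo_Q(s+2))-2d_{t,k}=(s+3)^2-(t^2+t+k^2+k)$; your ``alternative'' Euler-characteristic argument is essentially this same idea in different clothing (you subtract Riemann--Roch on $C$ instead of restricting to $Q$), and your Pascal-rule bookkeeping there is also correct. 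For the second identity the paper itself only says ``direct computation or induction'' via the plane section sequence, and your substitution $v=2u+1$ is precisely that direct computation, with the useful sanity check $I_{1,1}(3)=12$. What each approach buys: the paper's restriction argument avoids any reliance on having expanded (\ref{eqoov1}) correctly and is the same Horace-style move used throughout the paper, while your purely arithmetic derivation is self-contained once (\ref{eqoov1}) is granted and simultaneously verifies the closed cubic form in $u$. One cosmetic slip: when $s\equiv t+k-1\pmod2$ and $s\ge t+k-1$, the value $v=-1$ (i.e. $s=t+k-1$) is also allowed, so ``non-negative odd'' should be ``odd and $\ge-1$''; this does not affect anything since your identities are polynomial in $v$.
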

\begin{proof}
Let $Q$ be a general quadric surface and consider the exact sequence:\\ $0 \to \Ii _{C}(s) \to \Ii _C(s+2) \to \Ii _{C\cap Q}(s+2) \to 0$ (where $C := C_{t,k}$). Since $h^i(\Ii _C(m))=0$, if $m \geq t+k-1, 1\leq i \leq 2$, we get: $I(s+2) - I(s) = h^0(\Ii _{C\cap Q}(s+2)) = h^0(\Oo _Q(s+2)) - 2d_{t,k} = (s+3)^2 - (t^2+t+k^2+k)$.

For the second assertion one can make a direct computation or proceed by induction using: $0 \to \Ii _{C}(s) \to \Ii _C(s+1) \to \Ii _{C\cap H}(s+1) \to 0$, where $H$ is a general plane, and taking into account that $h^0(\Ii _{C\cap H}(s+1)) = h^0(\Oo _H(s+1))-d_{t,k}$. 
\end{proof}
We fix an integer $\alpha \ge 0$. Write $s = t+k+1+2u$ with $u\in \NN$. By (\ref{eqoov0}) we have
\begin{align}&a(s,t,k)_\alpha(2u+t+k) + b(s,t,k)_\alpha = \notag\\
&\frac{4u^3}{3} + [6+2(t+k)]u^2 +[\frac{26}{3}+5(t+k) +2kt]u +4+3(t+k)+2kt -\alpha u \notag\end{align}
By (\ref{eqo4.1}) we have
\begin{equation}\label{eqoov2}
(s-1)a(s,t,k)_\alpha + b(s,t,k)_\alpha = I_{t,k}(s) -4 -\alpha (s-t-k-1)/2
\end{equation}
We compare $a(s,t,k)_\alpha$, $s=t+k+1+2u$, with the function
$$\psi (u):= 
\frac{2u^2}{3} + [\frac{2(t+k)}{3}+3]u + t+k$$
\begin{lemma}\label{oov3}
If $t \ge 4$ and $k\ge 4$, then $a(s,t,k)_\alpha \le \psi (u)$, i.e.
$$a(s,t,k)_\alpha \le \frac{s^2+7s}{6} -\frac{(t+k)^2}{6} -\frac{3(t+k)}{6} -\frac{8}{6}$$
\end{lemma}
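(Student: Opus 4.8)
The plan is to bound $a(s,t,k)_\alpha$ from above by discarding the nonnegative contributions in the defining identity (\ref{eqoov0}), and then to reduce everything to a comparison of cubic polynomials in the auxiliary variable $u$ given by $s=t+k+1+2u$.

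Concretely, I would first write $n:=t+k$, so that $s-1=n+2u$. For the base value $s=t+k-1$, i.e. $u=-1$, the inequality is trivial: $a(t+k-1,t,k)_\alpha=0$ whereas $\psi(-1)=(n-7)/3>0$ since $n\ge 8$. For $s\ge t+k+1$ one has $u\ge 0$, $s-1=n+2u\ge 8>0$, $s-t-k-1=2u\ge 0$ and $b(s,t,k)_\alpha\ge 0$, so (\ref{eqoov0}) together with $\alpha\ge 0$ gives
$$(s-1)\,a(s,t,k)_\alpha \;\le\; I_{t,k}(s)-4 ,$$
hence $a(s,t,k)_\alpha\le (I_{t,k}(s)-4)/(s-1)$. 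It therefore suffices to check that $I_{t,k}(s)-4\le (n+2u)\psi(u)$.

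The heart of the argument is then a routine expansion. Substituting the closed formula for $I_{t,k}(s)$ from Lemma \ref{oov1} and multiplying out $(n+2u)\psi(u)$, the $u^3$ and $u^2$ terms cancel exactly and one is left with
$$(n+2u)\psi(u)-\bigl(I_{t,k}(s)-4\bigr)=\frac{2\,(t^2-tk+k^2-13)}{3}\,u+\bigl(t^2+k^2-3t-3k\bigr).$$
I expect the only point worth verifying to be that this remainder is nonnegative, and this is precisely where the hypothesis $t,k\ge 4$ enters: under it $t^2-tk+k^2\ge 16>13$ (the minimum over $t,k\ge 4$ being attained at $t=k=4$, since for $t\ge k$ the expression is increasing in $t$) and $t^2+k^2-3t-3k=t(t-3)+k(k-3)\ge 8$. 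As $u\ge 0$, the remainder is $\ge 0$, which yields $a(s,t,k)_\alpha\le\psi(u)$.

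Finally, the equivalent closed form stated in the lemma is obtained by substituting $u=(s-t-k-1)/2$ into $\psi$ and expanding, which returns $\psi(u)=\frac{s^2+7s}{6}-\frac{(t+k)^2}{6}-\frac{3(t+k)}{6}-\frac{8}{6}$; this last step is pure algebra and presents no real difficulty, so the only substantive input is the elementary positivity check above.
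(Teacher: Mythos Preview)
Your proof is correct and follows essentially the same route as the paper: both discard the nonnegative terms $b(s,t,k)_\alpha$ and $\alpha u$ in the identity $(s-1)a(s,t,k)_\alpha+b(s,t,k)_\alpha=I_{t,k}(s)-4-\alpha u$, then compare $I_{t,k}(s)-4$ with $(2u+t+k)\psi(u)$ and verify that the resulting linear-in-$u$ remainder is nonnegative via the two elementary checks $t^2-tk+k^2\ge 13$ and $t(t-3)+k(k-3)\ge 0$. The paper phrases the first check as $(t-k)^2+t^2+k^2\ge 26$, which is the same inequality, and omits your separate treatment of $s=t+k-1$ (where the identity (\ref{eqoov0}) does not literally hold), but otherwise the arguments coincide.
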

\begin{proof}
Note that
$$(2u+t+k)\psi (u) =\frac{4u^3}{3} +[2(t+k)+6]u^2 + [\frac{2(t+k)^2}{3}+5(t+k)]u +(t+k)^2$$
Since $b(s,t,k)_\alpha \ge 0$ and $\alpha u\ge 0$, we get
\begin{equation}\label{eqoov3}
(2u+t+k)(\psi (u) -a(s,t,k)_\alpha \ge [\frac{2(t+k)^2}{3} -2kt -\frac{26}{3}]u +(t+k)^2-3(t+k)-2kt
\end{equation}
We have $\frac{2(t+k)^2}{3} -2kt -\frac{26}{3}\ge 0$ $\Leftrightarrow$  $2(t+k)^2\ge 6kt+26$ $\Leftrightarrow$   $(t-k)^2+t^2+k^2\ge 26$, which is satisfied
if $t\ge 4$ and $k\ge 4$. We have $(t+k)^2 -3(t+k)-2kt \ge 0$  $\Leftrightarrow$  $t^2+k^2-3(t+k) \ge 0$  $\Leftrightarrow$  $t(t-3)+k(k-3)\ge 0$,
which is true if $k\ge 3$ and $t\ge 3$. Thus the right hand side of (\ref{eqoov3}) is non-negative. For the last assertion we use that $\psi (u)
= \frac{(s-t-k-1)}{6}[(s-k-t-1)+2(t+k)+9] +(t+k) = \frac{(s-t-k-1)(s+t+k+8)}{6}+t+k$.
\end{proof}

\begin{lemma}\label{oov2}
Fix integers $\alpha \ge 0$, $u\ge 0$, and set $s:= t+k+1+2u$ and $\delta := a(s+2,t,k)_\alpha -a(s,t,k)_\alpha$. Assume $t\ge k \ge
t/200$ (resp. $t\ge k \ge t/3$ and $k\ge 4$). Then
$$\delta > \frac{s}{102} -\frac{\alpha}{s+1}$$
 (resp. $\delta > \frac{s}{3}-\frac{\alpha}{s+1})$. In particular, if $s+1 > \alpha$, then $\delta > -1 +s/102$ (resp. $\delta > -1 +s/3$).
\end{lemma}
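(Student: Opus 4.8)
The plan is to reduce the inequality for $\delta$ to an explicit lower bound on $a(s+2,t,k)_\alpha - a(s,t,k)_\alpha$ that follows directly from the defining recursion~(\ref{eqo4.1}). Writing $s = t+k+1+2u$ and $s' = s+2$, equation~(\ref{eqo4.1}) for the step from $s$ to $s+2$ reads
\begin{align*}
&2(d_{t,k} + a(s,t,k)_\alpha) + (s+1)\delta + \alpha + b(s+2,t,k)_\alpha - b(s,t,k)_\alpha = (s+3)^2,
\end{align*}
so, using $0 \le b(s+2,t,k)_\alpha \le s$ and $b(s,t,k)_\alpha \le s-2$ (or $\le t+k$ in the base case), we get
\begin{align*}
(s+1)\delta \ge (s+3)^2 - 2d_{t,k} - 2a(s,t,k)_\alpha - \alpha - s.
\end{align*}
Thus everything comes down to bounding $a(s,t,k)_\alpha$ from above, and that is exactly what Lemma~\ref{oov3} provides: $a(s,t,k)_\alpha \le \tfrac{s^2+7s}{6} - \tfrac{(t+k)^2}{6} - \tfrac{3(t+k)}{6} - \tfrac{8}{6}$, valid since $t \ge k \ge t/200 \ge 4$ forces $t,k\ge 4$ (and in the second regime $k\ge 4$ is assumed outright). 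Also $d_{t,k} = t^2+t+k^2+k = (t+k)^2 - 2tk + t + k$.

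Next I would substitute these into the displayed lower bound for $(s+1)\delta$. Plugging in the bound for $a(s,t,k)_\alpha$, the $s^2$-terms are $(s^2 + 6s + 9) - 2\cdot\tfrac{s^2+7s}{6} = \tfrac{2}{3}s^2 + \tfrac{4}{3}s + 9$, and collecting the $(t+k)$-terms from $-2d_{t,k}$ together with $+\tfrac{(t+k)^2}{3} + (t+k) + \tfrac{8}{3}$ from $-2a(s,t,k)_\alpha$, plus $-\alpha - s$, yields
\begin{align*}
(s+1)\delta \ge \tfrac{2}{3}s^2 + \tfrac{1}{3}s + 9 + \tfrac{8}{3} - 2(t+k)^2 + 4tk - 2(t+k) + \tfrac{(t+k)^2}{3} + (t+k) - \alpha.
\end{align*}
The key nonnegativity to extract is on the quadratic form in $t,k$: one has $4tk - \tfrac{5}{3}(t+k)^2 = -\tfrac{5}{3}(t-k)^2 + \tfrac{2}{3} tk$ up to bookkeeping, so the sign of the $t,k$-contribution is governed by how large $(t-k)^2$ is relative to $tk$. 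This is precisely where the hypothesis $k \ge t/200$ (resp. $k\ge t/3$) enters: it bounds $(t-k)^2/(tk)$ by a constant less than the threshold, guaranteeing the $t,k$-terms dominate a positive multiple of $s^2$ (recall $s \ge t+k$, so $(t+k)^2 \le s^2$). The cleanest route is to prove the intermediate inequality $(s+1)\delta \ge \tfrac{s^2}{102} + (\text{lower order}) - \alpha$ in the first regime and $\ge \tfrac{s^2}{3} + \cdots - \alpha$ in the second, then divide by $s+1 < s+1$ and absorb the lower-order slack; dividing by $s+1$ turns $\tfrac{s^2}{102}$ into roughly $\tfrac{s}{102} - \tfrac{1}{102}$, and one checks the remaining constants leave $\delta > \tfrac{s}{102} - \tfrac{\alpha}{s+1}$.

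The final clause ("if $s+1 > \alpha$ then $\delta > -1 + s/102$") is immediate: $\tfrac{\alpha}{s+1} < 1$ under that hypothesis. I expect the main obstacle to be purely bookkeeping: tracking the constants carefully enough that the claimed constants $\tfrac{1}{102}$ and $\tfrac13$ actually come out, and handling the base case $s = t+k+1$ separately (where $b(s,t,k)_\alpha \le t+k$ rather than $\le s-2$, and $a(t+k+1,t,k)_\alpha$ is defined by~(\ref{eqo4}) directly, so Lemma~\ref{oov3} still applies at $u=1$ but the residual-term bound is slightly different). A secondary subtlety is that $\tfrac{s}{102}$ is not tight — the true constant from $k\ge t/200$ would be closer to $\tfrac{1}{100}$ — so there is a small cushion, and I would choose the rounding so that the inequality is strict and clean rather than optimal. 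No deeper idea is needed; the lemma is a direct corollary of the recursion~(\ref{eqo4.1}) and the already-established upper bound Lemma~\ref{oov3}.
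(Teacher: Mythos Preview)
Your strategy is exactly the paper's: isolate $(s+1)\delta$ from the recursion, use $b(s+2,t,k)_\alpha\le s$ and $b(s,t,k)_\alpha\ge 0$, insert the upper bound for $a(s,t,k)_\alpha$ from Lemma~\ref{oov3}, and reduce to a quadratic inequality in $t,k$ that the hypothesis $t\le 200k$ (resp.\ $t\le 3k$) settles. Two slips in your sketch would derail the arithmetic if carried through. First, $d_{t,k}=\deg(C_{t,k})=t(t+1)/2+k(k+1)/2$, so it is $2d_{t,k}$, not $d_{t,k}$, that equals $t^2+t+k^2+k$; your displayed bound therefore has $-2(t+k)^2+4tk-2(t+k)$ where it should read $-(t+k)^2+2tk-(t+k)$, and this factor of two changes the quadratic form. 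With the correction one gets cleanly
\[
(s+1)\delta \;\ge\; \frac{s(s+1)}{3}-\frac{t^2+k^2-4tk}{3}+(\text{positive terms})-\alpha,
\]
and the condition $t^2+k^2-4tk\le \tfrac{33}{34}(t+k)^2$, equivalently $t^2+k^2\le 202tk$, holds precisely when $t/k\le 101+\sqrt{101^2-1}$, which covers $t\le 200k$ and produces the constant $1/102$. Second, ``$k\ge t/200\ge 4$'' does not follow from the hypotheses as stated; the appeal to Lemma~\ref{oov3} needs $t,k\ge 4$, which in the first regime is tacitly assumed from context (the paper does the same). Finally, no separate base case is needed: shifting $s\mapsto s+2$ in~(\ref{eqo4.1}) already covers $s=t+k+1$, and Lemma~\ref{oov3} applies at $u=0$.
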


\begin{proof}
Taking the difference of (\ref{eqoov2}) and the same equation for the integer $s'=s+2$, we get
\begin{equation}\label{eqoov4}
(s+1)\delta + 2a(s,t,k)_\alpha +b(s+2,t,k)_\alpha -b(s,t,k)_\alpha = I_{t,k}(s+2) -I_{t,k}(s) -\alpha
\end{equation}
By Lemma \ref{oov1} we have
$$(s+1)\delta +2a(s,t,k)_\alpha + b(s+2,t,k)_\alpha -b(s,t,k)_\alpha =(s+3)^2-(t^2+k^2+t+k) -\alpha$$
We have $b(s,t,k)_\alpha \ge 0$, $b(s+2,t,k)_\alpha \le s$ and $2a(s,t,k)_\alpha \le [(s^2+7s)-(t+k)^2-3(t+k)-8]/3$ (Lemma \ref{oov3}). Thus
$$ (s+1)\delta \ge (s+3)^2 -(t^2+k^2+t+k) -2(s^2+7s)/3 + 2(t+k)^2/3 +2(t+k) +16/3 -\alpha$$, i.e.
$$(s+1)\delta \ge \frac{(s^2+s)}{3} -\frac{(t^2+k^2-4tk -3t-3k-1)}{3} -\alpha.$$ 
We obviously have $3t+3k+1 \ge 0$. If $t\le 3k$, then $(t^2+k^2-4tk -3t-3k-1) <0$ (more precisely, it is sufficient to
assume $t < \frac{4k+3+\sqrt{12k^2+36k+73}}{2}$) and hence we conclude in this case. Now assume only $t\le 200k$. Since
$s\ge t+k$, it is sufficient to prove that $t^2+k^2-4tk \le \frac{33}{34}(t+k)^2$, i.e.
$t^2+k^2 -202tk\le 0$. This is true if $t\ge k>0$ and $t/k \le 101 +\sqrt{101^2-1}$.
\end{proof}

Note that $a(t+k+1,t,k)_\alpha$ and $b(t+k+1,t,k)_\alpha$ do not depend from $\alpha$.

\begin{lemma}
\label{L-recI}
Let $A$ and $B$ be positive rational numbers. Assume $t+k \geq 4A + 2B$ and $(s-1)d_{t,k} + I_{t,k}(s) \geq As^2 + Bs$, for some $s \geq t+k+1$. Then $(s+1)d_{t,k} + I_{t,k}(s+2) \geq A(s+2)^2 + B(s+2)$.
\end{lemma}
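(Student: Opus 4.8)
The plan is to reduce the claimed inequality to an elementary numerical estimate by differencing. It suffices to subtract the hypothesis $(s-1)d_{t,k} + I_{t,k}(s) \geq As^2 + Bs$ from the desired conclusion, i.e. to prove
$$\bigl[(s+1)d_{t,k} + I_{t,k}(s+2)\bigr] - \bigl[(s-1)d_{t,k} + I_{t,k}(s)\bigr] \ \geq\ \bigl[A(s+2)^2 + B(s+2)\bigr] - \bigl[As^2 + Bs\bigr].$$

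First I would evaluate the left-hand side. The $d_{t,k}$-terms contribute $2d_{t,k}$, and Lemma \ref{oov1} gives $I_{t,k}(s+2) - I_{t,k}(s) = (s+3)^2 - (t^2+k^2+t+k)$. Since $d_{t,k} = t(t+1)+k(k+1) = t^2+k^2+t+k$, the two quadratic-in-$(t,k)$ pieces combine, leaving exactly $2d_{t,k} - (t^2+k^2+t+k) = d_{t,k}$, so the left-hand side equals $d_{t,k} + (s+3)^2$. The right-hand side is the trivial difference $A\bigl((s+2)^2-s^2\bigr) + B\bigl((s+2)-s\bigr) = 4As + 4A + 2B$. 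Hence everything boils down to the inequality $d_{t,k} + (s+3)^2 \geq 4As + 4A + 2B$.

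For the last step I would use the hypothesis $t+k \geq 4A+2B$ together with $s \geq t+k+1$: these give $4A+2B \leq t+k \leq s-1$, hence in particular $4A \leq s-1$ and so $4A+1 \leq s$. Therefore $4As+4A+2B \leq 4As + (s-1) = (4A+1)s - 1 \leq s^2 - 1 < (s+3)^2 \leq d_{t,k} + (s+3)^2$, using $d_{t,k} \geq 0$. This proves the required inequality and hence the lemma.

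There is no serious obstacle here; the lemma is a bookkeeping device ensuring that the linear "budget'' $4A+2B$ absorbed at each step of the induction stays below $s$, and its only genuine content is the identity $2d_{t,k} - (t^2+k^2+t+k) = d_{t,k}$ combined with the crude bound $(s+3)^2 > s^2$. The one point to watch is that $A$ and $B$ are only rational, so I would phrase the whole computation as an inequality between rationals and avoid any floor/ceiling manipulations.
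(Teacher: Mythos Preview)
Your approach is the same as the paper's: difference the two inequalities, invoke Lemma \ref{oov1} for $I_{t,k}(s+2)-I_{t,k}(s)$, and reduce everything to the elementary bound $(s+3)^2 \ge 4As+4A+2B$, which follows from $s \ge t+k+1 > 4A+2B$.

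There is, however, one arithmetic slip. You write $d_{t,k} = t(t+1)+k(k+1)$, but in fact $d_{t,k}=\deg(C_{t,k})=t(t+1)/2+k(k+1)/2$ (there is a typo in one line of the paper's preliminaries, but the correct value is used consistently in all computations, including the proof of Lemma \ref{oov1} where $2d_{t,k}=t^2+k^2+t+k$). Consequently $2d_{t,k}-(t^2+k^2+t+k)=0$, not $d_{t,k}$, and the differenced left-hand side is exactly $(s+3)^2$, just as in the paper. This does not damage your argument: your chain $4As+4A+2B \le (4A+1)s-1 \le s^2-1 < (s+3)^2$ already proves the needed inequality before you tack on the superfluous ``$\le d_{t,k}+(s+3)^2$''. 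Simply delete that last step and correct the value of $d_{t,k}$, and your proof coincides with the paper's.
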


\begin{proof} By Lemma \ref{oov1}, we have $I(s+2) - I(s) = (s+3)^2 -(t^2+k^2 +t +k)$ (where $I(m):= I_{t,k}(m)$). It follows that:
\begin{equation}
\begin{split}
(s+1)d_{t,k} + I(s+2) = (s-1)d_{t,k} + I(s) + 2d_{t,k} + (s+3)^2 -(t^2 +k^2 +t+k)\\ 
\geq As^2 +Bs +  2d_{t,k} + (s+3)^2 -(t^2 +k^2 +t+k)
\end{split}
\end{equation}
So it is enough to prove:
\begin{equation}
 2d_{t,k} + (s+3)^2 -(t^2 +k^2 +t+k) = (s+3)^2 \geq 4As + 4A +2B
 \end{equation}
 But $s^2 + 6s \geq 4As +4A +2B \Leftrightarrow s+6 \geq 4A +(4A +2B)/s$ and this last inequality follows from: $s +6 > s \geq 4A + (4A+2B)/s$, since $s \geq t+k+1 > 4A +2B$. 

\end{proof}

\begin{lemma}
\label{ineq(a(s))d} 
Let $C > 0$ and $D$ be rational numbers. Assume $t \geq k \geq 2C$ and $t+k \geq 4C + 2|D-C+\alpha +2|$. Then we have $a(s,t,k) + d_{t,k} \geq Cs + D$ for any $s \geq t+k+1$, $s \equiv t+k+1 \pmod{2}$.
\end{lemma}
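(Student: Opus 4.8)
The plan is to derive the bound $a(s,t,k)+d_{t,k}\ge Cs+D$ by induction on $s$ (over integers $s\equiv t+k+1\pmod 2$, $s\ge t+k+1$), with the inductive engine being Lemma~\ref{L-recI} applied to the quantity $(s-1)d_{t,k}+I_{t,k}(s)$. The point is that, up to the explicit correction terms $b(s,t,k)$ and $\alpha(s-t-k-1)/2$ recorded in \eqref{eqoov0}, the integer $(s-1)a(s,t,k)$ is essentially $(s-1)d_{t,k}+I_{t,k}(s)$ shifted by a bounded amount; so a lower bound of the shape $As^2+Bs$ on the latter translates into a lower bound of the shape $\frac{A}{s-1}s^2+\cdots$ on $a(s,t,k)+d_{t,k}$, which in turn is $\ge Cs+D$ once we choose $A,B$ appropriately and absorb the lower-order slack into the hypotheses on $t+k$.

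Concretely, first I would rewrite \eqref{eqoov0} as
\begin{equation}
(s-1)\bigl(a(s,t,k)+d_{t,k}\bigr) = (s-1)d_{t,k} + I_{t,k}(s) - 4 - \tfrac{\alpha}{2}(s-t-k-1) - b(s,t,k),
\end{equation}
and use $0\le b(s,t,k)\le s-2$ together with $s-t-k-1\le s$ to get the clean estimate
\begin{equation}
(s-1)\bigl(a(s,t,k)+d_{t,k}\bigr) \ge (s-1)d_{t,k} + I_{t,k}(s) - (s-2) - 4 - \tfrac{\alpha}{2}s.
\end{equation}
So it suffices to show $(s-1)d_{t,k}+I_{t,k}(s) \ge Cs^2 + (C+D-1+\tfrac{\alpha}{2})s + (\text{const})$, i.e. to run Lemma~\ref{L-recI} with $A:=C$ and $B$ a rational number of size roughly $D-C+\alpha+2$; dividing through by $s-1$ and bounding $\frac{Cs^2}{s-1}\ge Cs$ then yields $a(s,t,k)+d_{t,k}\ge Cs+D$. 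The hypotheses $t+k\ge 4C+2|D-C+\alpha+2|$ are exactly what Lemma~\ref{L-recI} needs for the chosen $A=C$ and $B=|D-C+\alpha+2|$ (or its relevant signed version).

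For the base case $s=t+k+1$ I would use \eqref{eqo4}, which pins down $a(t+k+1,t,k)+d_{t,k}$ exactly: since $(t+k+1)(d_{t,k}+a(t+k+1,t,k))+3-g_{t,k}+b(t+k+1,t,k)=\binom{t+k+4}{3}$ and $(t+k)d_{t,k}+3-g_{t,k}=\binom{t+k+3}{3}$ (the arithmetic genus identity for $C_{t,k}$ rewritten via Lemma~\ref{b1+++++}), subtracting gives $(t+k+1)(a(t+k+1,t,k)+d_{t,k}) = \binom{t+k+4}{3}-\binom{t+k+3}{3}+d_{t,k}-b(t+k+1,t,k) = (t+k+1)^2+d_{t,k}-b$, so $a(t+k+1,t,k)+d_{t,k}\ge (t+k+1) + \frac{d_{t,k}-(t+k)}{t+k+1}$, which comfortably exceeds $C(t+k+1)+D$ given the lower bound $t+k\ge 4C+2|D-C+\alpha+2|$ and $k\ge 2C$ (the term $d_{t,k}=t(t+1)+k(k+1)$ is quadratic and dominates everything). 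The assumption $t\ge k\ge 2C$ also guarantees $C>0$ does not force $Cs+D$ past what the quadratic growth of $d_{t,k}$ can supply.

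The main obstacle I anticipate is purely bookkeeping: tracking the exact rational value of $B$ needed in Lemma~\ref{L-recI} so that the induction closes with the stated hypothesis $t+k\ge 4C+2|D-C+\alpha+2|$ rather than something slightly larger. One has to be careful that the $-(s-2)-4-\frac{\alpha}{2}s$ loss from the $b$-term and the $\alpha$-term gets folded correctly into $B$, that the division by $s-1$ doesn't erode a constant below $D$ (here using $s\ge t+k+1$ is essential, so the $\frac{C s^2}{s-1}-Cs = \frac{Cs}{s-1}$ surplus covers the leftover negative constants), and that the signed vs.\ absolute-value version of $D-C+\alpha+2$ is chosen so that Lemma~\ref{L-recI}'s requirement $t+k\ge 4A+2B$ with $A=C$, $B\ge |D-C+\alpha+2|$ is literally what is assumed. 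None of these steps is deep; the care is in the constants.
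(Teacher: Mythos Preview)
Your approach is essentially the paper's: use \eqref{eqoov0} with $b(s,t,k)\le s-2$ to reduce the desired bound to an inequality of the form $(s-1)d_{t,k}+I_{t,k}(s)\ge As^2+|B|s$ with $A=C$ and $B=D-C+\alpha+2$, then invoke Lemma~\ref{L-recI} for the inductive step and check the base $s=t+k+1$ by hand. One structural point to fix: the base case you must verify is that of the $I$-inequality at $s=t+k+1$ (the paper does this via $I_{t,k}(t+k+1)=2tk+3(t+k)+4$ and the hypotheses $t\ge k\ge 2C$), not the conclusion $a+d_{t,k}\ge Cs+D$ itself, since Lemma~\ref{L-recI} propagates the former and the latter does not feed back into it. Your auxiliary identity $(t+k)d_{t,k}+3-g_{t,k}=\binom{t+k+3}{3}$ is also off (the correct vanishing is $(t+k-1)d_{t,k}+2-g_{t,k}=\binom{t+k+2}{3}$, coming from $h^i(\Ii_{C_{t,k}}(t+k-1))=0$), but as you note this is pure bookkeeping.
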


\begin{proof} By definition $a(s)(s-1) + b(s) = I(s) -4-\alpha (s-t-k-1)/2$, with $0 \leq b(s) \leq s-2$ (we drop the indices $t,k$ in $a, b, I$). Using $b(s) \leq s-2$, we get:
\begin{equation}
\begin{split}
a(s)(s-1) \geq I(s) -2 -\alpha (s-t-k-1)/2 -s\\
\geq I(s) -s(\alpha +2)/2
\end{split}
\end{equation}
By dividing by $s-1$ and using $s(\alpha +2)/2(s-1) \leq \alpha +2$, we get: 
$$a(s) \geq I(s)/(s-1) -(\alpha +2)$$
So it is enough to show: $d_{t,k} + I(s)/(s-1) \geq Cs + D +\alpha +2$, which is equivalent to:
$$(s-1)d_{t,k} + I(s) \geq As^2 +Bs \,\,\,\,(*)$$
where $A = C$ and where $B = D-C+\alpha +2$. Note that $B$ can be negative so we will consider the following inequality:
$$(s-1)d_{t,k} + I(s) \geq As^2 +|B|s \,\,\,\,(**).$$
Clearly $(**)$ implies $(*)$.
According to Lemma \ref{L-recI}, if $t+k \geq 4A +2|B|$, it is enough to prove this inequality for $s = t+k+1$. Since $I(t+k+1) = 2tk + 3(t+k)+4$, for $s=t+k+1$ $(**)$ reads like:
\begin{equation}
\begin{split}
t^2(t+1)/2 + k^2(k+1)/2 + tk(t+k+2)/2 + 2tk +3(t+k)+4 \\ \geq  At^2 +Ak^2 + 2Atk +(t+k+1)(2A+|B|) -A
\end{split}
\end{equation}
We have $t^2(t/2) \geq At^2$, $k^2(k/2) \geq Ak^2$ (because $t \geq k \geq 2A$). For the same reason, we also have $tk(t+k+2)/2 \geq tk(2A+1)$. It remains to show that $(t^2 + k^2)/2 + 3tk + 3(t+k+1)+1 \geq (t+k+1)(2A+|B|) -A$. We have:
$$(t^2+k^2)/2 + 3tk +3(t+k+1) +1 = (t+k+1)^2/2 + 2tk + 2t +2k +7/2$$
Since $(t+k+1)/2 \geq 2A + |B| + 1/2$ by assumption, we get inequality $(**)$, hence also inequality $(*)$.  
\end{proof}

\begin{corollary}
\label{C-boundDelta}
Let $M$ be a positive rational number. Assume $t \geq k \geq M +6$ and $t+k \geq 2M +\alpha +18$, then $\delta := a(s+2) - a(s) < s -M$, $\forall s \geq t+k+1$, $s \equiv t+k+1 \pmod{2}$.
\end{corollary}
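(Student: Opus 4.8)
The plan is to reduce the desired estimate $\delta = a(s+2)-a(s) < s-M$ to the lower bound for $a(s,t,k)+d_{t,k}$ furnished by Lemma \ref{ineq(a(s))d}, applied with a suitable choice of the constants $C$ and $D$. The starting point is equation (\ref{eqoov4}) in the proof of Lemma \ref{oov2}, which via Lemma \ref{oov1} reads
\begin{equation*}
(s+1)\delta + 2a(s,t,k) + b(s+2,t,k) - b(s,t,k) = (s+3)^2 - (t^2+k^2+t+k) - \alpha .
\end{equation*}
Since $b(s,t,k)\ge 0$ and $b(s+2,t,k)\le s$, this gives $(s+1)\delta \le (s+3)^2 - (t^2+k^2+t+k) - \alpha - 2a(s,t,k)$. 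Hence the task is to bound $2a(s,t,k)$ from below strongly enough that the right-hand side becomes at most $(s+1)(s-M)$.

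First I would feed into Lemma \ref{ineq(a(s))d} the choice $C := 1/2$ and $D$ chosen so that $2(Cs+D) - 2d_{t,k}$ cancels the terms $(s+3)^2 - (t^2+k^2+t+k)$ up to a controlled remainder; concretely, $2a(s,t,k) \ge 2Cs + 2D - 2d_{t,k} = s + 2D - (t^2+k^2+t+k)$ if we unwind $d_{t,k}=t^2+k^2+t+k$. Plugging this in yields
\begin{equation*}
(s+1)\delta \le (s+3)^2 - \alpha - s - 2D = s^2 + 5s + 9 - \alpha - 2D,
\end{equation*}
and I want this to be $< (s+1)(s-M) = s^2 + (1-M)s - M$. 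Comparing coefficients, this reduces to $(4+M)s + 9 - \alpha - 2D < -M$, i.e. $2D > (4+M)s + 9 - \alpha + M$; but $D$ must be a constant, so instead I would take $C$ slightly larger than $1/2$, say $C := 1/2 + \varepsilon$ with $\varepsilon$ of order $(M+4)/s$—which forces a genuine dependence on the lower bounds for $t,k$. The cleanest route is to run Lemma \ref{ineq(a(s))d} with $C$ a fixed rational just above $1/2$ (the precise value dictated by how large $t+k$ is allowed to be) together with the hypotheses $t\ge k\ge M+6$ and $t+k\ge 2M+\alpha+18$, verify that these imply the hypotheses $t\ge k\ge 2C$ and $t+k\ge 4C+2|D-C+\alpha+2|$ of that lemma for the bookkeeping value of $D$, and then carry the resulting linear-in-$s$ lower bound for $a(s,t,k)$ through the displayed inequality for $(s+1)\delta$.

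The main obstacle is the bookkeeping of constants: one has to pick $C$ and $D$ so that (i) the strengthened lower bound on $a(s,t,k)$ beats $\tfrac12\big((s+3)^2-(t^2+k^2+t+k)-\alpha-(s+1)(s-M)\big)$ for every $s\ge t+k+1$, and simultaneously (ii) the side conditions $t\ge k\ge 2C$ and $t+k\ge 4C+2|D-C+\alpha+2|$ are consequences of the stated hypotheses $t\ge k\ge M+6$, $t+k\ge 2M+\alpha+18$. Because $a(s,t,k)$ is only controlled up to an additive $O(1)$ error (the $\alpha+2$ slack in Lemma \ref{ineq(a(s))d}) and a $b(s)\le s-2$ slack, there is essentially one unit of room per unknown, so the inequalities are tight and must be arranged carefully; this is a routine but delicate calculation rather than a conceptual difficulty. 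Once the constants are fixed, the proof is a two-line substitution: apply Lemma \ref{ineq(a(s))d} to get $a(s,t,k)+d_{t,k}\ge Cs+D$, substitute into $(s+1)\delta \le (s+3)^2 - (t^2+k^2+t+k) - \alpha - 2a(s,t,k)$, and simplify to obtain $(s+1)\delta < (s+1)(s-M)$, hence $\delta < s-M$.
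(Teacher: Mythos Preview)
Your overall plan---combine the recursion \eqref{eqo4.1} with Lemma~\ref{ineq(a(s))d}---is exactly the paper's approach, but the execution has a real gap in the choice of constants.

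First a minor slip: to bound $(s+1)\delta$ from \emph{above} you need $b(s)\le s-2$ and $b(s+2)\ge 0$, not the reverse; this adds a term $+(s-2)$ to your displayed upper bound.

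The substantive problem is your value of $C$. After the correction above, the upper bound reads
\[
(s+1)\delta \;\le\; (s+3)^2 - 2\bigl(d_{t,k}+a(s)\bigr) - \alpha + s-2,
\]
and plugging in $d_{t,k}+a(s)\ge Cs+D$ gives a bound whose linear coefficient in $s$ is $7-2C$. For this to be at most the coefficient $1-M$ in $(s+1)(s-M)$ you need $C\ge M/2+3$, not $C$ ``just above $1/2$''. With any fixed $C<M/2+3$ the inequality fails for all large $s$, and no choice of $D$ (nor an $s$-dependent $\varepsilon$, which Lemma~\ref{ineq(a(s))d} does not allow) can repair it. The paper takes precisely $C=M/2+3$ and $D=(M-\alpha)/2+4$; then the hypotheses $k\ge M+6$ and $t+k\ge 2M+\alpha+18$ are \emph{exactly} the conditions $k\ge 2C$ and $t+k\ge 4C+2|D-C+\alpha+2|$ of Lemma~\ref{ineq(a(s))d} (note $D-C+\alpha+2=\alpha/2+3>0$). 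With these values the substitution yields $(s+1)\delta\le (s+1)(s-M)-1$, giving the strict inequality. The paper packages this contrapositively: assume $\delta\ge s-M$, deduce $2(d_{t,k}+a(s))\le (M+6)s+M-\alpha+7$, and contradict the lemma's lower bound $(M+6)s+M-\alpha+8$.
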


\begin{proof} We have: $2(d_{t,k} + a(s)) + (s+1)\delta + \alpha + b(s+2) -b(s) = (s+3)^2$ by (\ref{eqo4.1}). If $\delta \geq s-M$, since $b(s+2) \geq 0$ and $b(s) \leq s-2$, we get: $(s+3)^2 \geq 2(d_{t,k} + a(s)) + (s+1)(s-M) + \alpha -s+2$, i.e. $s(M+6) + M-\alpha +7 \geq 2(d_{t,k} + a(s))$. Using Lemma \ref{ineq(a(s))d} with $C = M/2 +3$ and $D = (M-\alpha )/2 +4$, we get a contradiction. 
\end{proof}


\begin{lemma}\label{oov2.1}
Assume $t\ge k\ge 4$ and $t\le 200k$ (resp. $t\le 3k$). Then $a(t+k+1,t,k)_\alpha \ge (t+k)/102$ (resp. $a(t+k+1,t,k)_\alpha
\ge (t+k)/3$). If $\alpha \le -1+(t+k)/102$ (resp. $\alpha \le -1+(t+k)/3$), then $a(t+k+3,t,k)_\alpha -a(t+k+1,t,k)_\alpha
\le 1+2a(t+k+1,t,k)_\alpha$. Let $e$ be the minimal integer $x$ such that $1\le x \le \delta:= a(t+k+3,t,k)_\alpha
-a(t+k+1,t,k)_\alpha$ and
$b(t+k+1,t,k)_\alpha
\le (x-1)(\delta -x-1)$. Assume $\alpha \le -1+(t+k)/102$ and $t+k\ge 1113636$ (resp. $\alpha \le -1+(t+k)/3$ and $t+k>78$). We have $e\le 104$ (resp. $e\le 5$).
\end{lemma}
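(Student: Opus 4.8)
The plan is to unwind everything from the two defining relations~(\ref{eqo4}) and~(\ref{eqo4.1}), evaluated at $s=t+k+1$ and $s=t+k+3$, using only the a priori bounds $0\le b(t+k+1,t,k)_\alpha\le t+k$ and $b(t+k+3,t,k)_\alpha\ge 0$, the value $I_{t,k}(t+k+1)=2tk+3(t+k)+4$ coming from~(\ref{eqoov1}), and the lower bound for $\delta$ provided by Lemma~\ref{oov2} (whose hypotheses are implied by those here); recall that $a(t+k+1,t,k)_\alpha$ and $b(t+k+1,t,k)_\alpha$ do not depend on $\alpha$. For the first assertion, specialize~(\ref{eqoov0}) to $s=t+k+1$: the term $\alpha(s-t-k-1)/2$ vanishes, so $(t+k)\,a(t+k+1,t,k)_\alpha+b(t+k+1,t,k)_\alpha=2tk+3(t+k)$ and hence $a(t+k+1,t,k)_\alpha\ge 2+\frac{2tk}{t+k}$. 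Now $2+\frac{2tk}{t+k}\ge\frac{t+k}{102}$ clears to $202tk+204(t+k)\ge t^2+k^2$, which holds because $t\le 200k$ forces $t^2\le 200tk$ and $k\le t$ forces $k^2\le tk$; the resp.\ case is identical with $3,4$ in place of $102,202$, using $t\le 3k$. Since moreover $t\ge k\ge 4$ gives $(t-2)(k-2)\ge 4$, i.e.\ $tk\ge 2(t+k)$, we also get the cheap bound $a(t+k+1,t,k)_\alpha\ge 6$, used below.

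For the inequality $\delta\le 1+2a(t+k+1,t,k)_\alpha$, write~(\ref{eqo4.1}) at $s=t+k+3$:
\[
2\big(d_{t,k}+a(t+k+1,t,k)_\alpha\big)+(t+k+2)\delta+\alpha+b(t+k+3,t,k)_\alpha-b(t+k+1,t,k)_\alpha=(t+k+4)^2.
\]
Using $b(t+k+3,t,k)_\alpha\ge 0$, $b(t+k+1,t,k)_\alpha\le t+k$, $\alpha\ge 0$, and the identity $(t+k+4)^2-2d_{t,k}=-(t-k)^2+6(t+k)+16\le 6(t+k)+16$, this gives $(t+k+2)\delta\le 7(t+k)+16-2a(t+k+1,t,k)_\alpha$. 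On the other hand the claim is equivalent to $(t+k+2)\delta\le(t+k+2)+(2(t+k)+4)a(t+k+1,t,k)_\alpha$, so it suffices to have $6(t+k)+14\le(2(t+k)+6)a(t+k+1,t,k)_\alpha$, which follows immediately from $a(t+k+1,t,k)_\alpha\ge 6$.

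For the bound on $e$, set $f(x):=(x-1)(\delta-x-1)$; since $f(1)=0$, it is enough to exhibit one integer $x$ with $1\le x\le\delta$ and $f(x)\ge b(t+k+1,t,k)_\alpha$, for then the minimal such integer is at most $x$. The hypothesis $\alpha\le -1+(t+k)/102$ (resp.\ $\alpha\le -1+(t+k)/3$) forces $\alpha<t+k+2=s+1$ with $s=t+k+1$, so Lemma~\ref{oov2} applies and yields $\delta>(t+k)/102-1$ (resp.\ $\delta>(t+k-2)/3$). Together with $b(t+k+1,t,k)_\alpha\le t+k$, we take $x=104$ (resp.\ $x=5$): then $f(104)=103(\delta-105)>103\big(\frac{t+k}{102}-106\big)$, which is $\ge t+k$ as soon as $t+k\ge 102\cdot 103\cdot 106=1113636$; and $f(5)=4(\delta-6)>4\big(\frac{t+k-2}{3}-6\big)$, which is $\ge t+k$ for $t+k\ge 80$, the single value $t+k=79$ being checked by hand using that $\delta$ is an integer (there $\delta\ge 26$, so $f(5)\ge 80$). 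Finally $\delta>10917$ (resp.\ $\delta>25$) guarantees $1\le x\le\delta$, so $x$ is admissible and $e\le x$.

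The argument is elementary throughout; the only thing to watch is that the $\alpha$-dependent hypotheses are exactly what lets one invoke Lemma~\ref{oov2} in the ``$s+1>\alpha$'' form, and that the explicit constants are tracked so that the threshold on $t+k$ comes out as stated. The hard part, such as it is, is purely this bookkeeping: the lemma is a numerical preparation for the grid appearing in Assertion $A(s,t,k)_\alpha$, and no conceptual difficulty arises.
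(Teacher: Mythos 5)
Your overall route is the same elementary bookkeeping the paper uses, and two of the three assertions are handled correctly: the first one (your bound $a(t+k+1,t,k)_\alpha\gtrsim 2tk/(t+k)$ from the relation at $s=t+k+1$ plus $b(t+k+1,t,k)_\alpha\le t+k$ is exactly the paper's argument, which derives $(t+k+1)a(t+k+1,t,k)_\alpha+1+b(t+k+1,t,k)_\alpha=2tk+3t+3k+4$ from (\ref{eqo4}) and Lemma \ref{oov1}), and the last one (exhibit $x=104$, resp.\ $x=5$, using the lower bound on $\delta$ from Lemma \ref{oov2} and $b(t+k+1,t,k)_\alpha\le t+k$; same threshold $102\cdot103\cdot106$ as in the paper). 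One caveat on the first part: equation (\ref{eqoov0}) as printed is not consistent with (\ref{eqo4})--(\ref{eqo5}) (the correct specialization at $s=t+k+1$ is $(t+k+1)a+b=2tk+3(t+k)+3$, not $(t+k)a+b=2tk+3(t+k)$), so your stated intermediate bound $a\ge 2+\frac{2tk}{t+k}$ is slightly too strong; the target $a\ge (t+k)/102$ still follows with ample slack, so this is harmless.

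The genuine gap is in your proof of the middle assertion $\delta\le 1+2a(t+k+1,t,k)_\alpha$. You take $d_{t,k}=t(t+1)+k(k+1)$ literally, whereas throughout the numerical scheme $d_{t,k}$ is the degree $\frac{t(t+1)+k(k+1)}{2}$ (this is forced by (\ref{eqo4}), by Lemma \ref{oov1}, and is explicit in the proof of Lemma \ref{L-recI}, where $2d_{t,k}=t^2+k^2+t+k$; the displayed ``$=t(t+1)+k(k+1)$'' is a misprint, contradicting the sentence just above it giving $\deg(C_{t,k})=t(t+1)/2+k(k+1)/2$). With the correct normalization your key identity becomes $(t+k+4)^2-2d_{t,k}=2tk+7(t+k)+16$, not $-(t-k)^2+6(t+k)+16$: the $2tk$ term does not cancel. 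Consequently your bound $(t+k+2)\delta\le 7(t+k)+16-2a$ is false (it would force $\delta\le 6$ for large $t+k$, contradicting the Lemma \ref{oov2} bound $\delta\gtrsim (t+k)/102$ that you invoke two lines later), and the final reduction ``it suffices that $6(t+k)+14\le(2(t+k)+6)a$, which follows from $a\ge 6$'' no longer proves anything: one must absorb a term of size $2tk$. The argument is repairable along your own lines: from $(t+k+2)\delta\le 2tk+8(t+k)+16-2a$ the claim reduces to $2tk+7(t+k)+14\le(2(t+k)+6)a(t+k+1,t,k)_\alpha$, which follows from the corrected first-part bound $(t+k+1)a(t+k+1,t,k)_\alpha\ge 2tk+2(t+k)+3$ together with $2tk\ge 3(t+k)+8$ (true for $t\ge k\ge 4$). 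As written, however, the middle assertion is not proved.
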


\begin{proof}
Since
$(t+k+2)^2 -(t^2+k^2+t+k) = 2tk +3t+3k+4$, Lemma
\ref{oov1} for
$s=t+k-1$ gives
$(t+k+1)a(t+k+1,t,k)_\alpha+1+b(t+k+1,t,k)_\alpha = 2tk+3t+3k+4$.
Since $b(t+k+1,t,k)_\alpha \le t+k$, we get
\begin{equation}\label{eq==o1}
(t+k+1)a(t+k+1,t,k)_\alpha \ge 2tk+2t+2k+3
\end{equation} If $a(t+k+1,t,k)_\alpha \le (t+k-1)/102$ (resp. $a(t+k+1,t,k)_\alpha  \le
(t+k-1)/3$, then $(t+k+1)(t+k-1) \ge 102(2tk+2t+2k+3)$
(resp. $(t+k+1)(t+k-1) \ge 6tk+12t+12k+18$), which is false for all positive $k$ if $t\le 200k$ (resp. $t\le 3k$).

Now assume $\alpha \le -1+(t+k)/102$ (resp. $\alpha \le -1+(t+k)/3$). To prove the second assertion of the lemma it is sufficient to prove
that $a(k+t+3,t,k)_\alpha +1 \le 3a(k+t+1,t,k)_\alpha$. We have $I_{t,k}(t+k+3) =
\frac{4}{6}[(30+15(t+k)+6kt] = 20+10(t+k)+4kt$ and $a(t+k+3,t,k)_\alpha \le (I_{t,k}(t+k+3)+4)/(t+k+2)
=(20+10(t+k)+4kt)/(t+k+2)$, while $a(t+k+1)_\alpha \le (I_{t,k}(t+k+1) -1-t-k)/(t+k+1) = 2(t+k+tk)/(t+k+1)$.

Since
$b(t+k+1,t,k)_\alpha
\le t+k$, to prove the last assertion of the lemma it is sufficient to observe that $103(\delta -202)
\ge t+k$ (resp. $4\delta \ge
24+t+k$) by the case $s=t+k+1$ of Lemma
\ref{oov2} and the assumption $t+k\ge 1113636=106\cdot 103\cdot 102$, which gives that $103(-1+\frac{t+k+1}{102} -202) \ge
t+k$.\end{proof}

\begin{lemma}\label{oov2.1=}
Assume $200k\ge t\ge k\ge 4$, $\alpha \le -1+(t+k)/102$ and $t+k \ge  42040$.  Let $e$ be the minimal integer $x$ such that
$1\le x
\le \delta:= a(t+k+3,t,k)_\alpha -a(t+k+1,t,k)_\alpha$ and
$b(t+k+1,t,k)_\alpha
\le (x-1)(\delta -x-1)$. Then $e\le 201$.
\end{lemma}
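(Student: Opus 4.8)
The plan is to establish $e \le 201$ by simply checking that the value $x = 201$ satisfies the requirements in the definition of $e$; since $e$ is the \emph{minimal} such $x$, this both shows that $e$ is well defined and gives $e \le 201$ directly. Thus I would verify the two conditions $1 \le 201 \le \delta$ and $b(t+k+1,t,k)_\alpha \le (201-1)(\delta - 201 - 1) = 200(\delta - 202)$. Since $b(t+k+1,t,k)_\alpha \le t+k$ (recorded in Section \ref{Spr}), both will follow from a lower bound on $\delta$ strong enough to force $\delta > 202$ and $200(\delta - 202) \ge t+k$.

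To get such a bound I would quote Lemma \ref{oov2}: the assumption $200k \ge t \ge k$ says $t \ge k \ge t/200$, so the first alternative of Lemma \ref{oov2} applies with $u = 0$ (hence $s = t+k+1$), giving $\delta > \frac{t+k+1}{102} - \frac{\alpha}{t+k+2}$. The hypothesis $\alpha \le -1 + (t+k)/102$ yields $102\alpha \le (t+k) - 102 < (t+k)+2$, so $\frac{\alpha}{t+k+2} < \frac{1}{102}$ and therefore $\delta > \frac{t+k}{102}$. Since $\delta$ is an integer, this upgrades to $\delta \ge M$, where $M := \lfloor (t+k)/102 \rfloor + 1$; note also that $102M > t+k$, so $t+k \le 102M - 1$.

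The last step is elementary arithmetic. From $t+k \ge 42040 > 412 \cdot 102$ we get $\lfloor (t+k)/102 \rfloor \ge 412$, hence $M \ge 413$, hence $98M \ge 98 \cdot 413 = 40474 > 40399$; this is equivalent to $200(M-202) \ge 102M - 1$, and combined with $102M - 1 \ge t+k$ it gives $200(M-202) \ge t+k$. In particular $\delta \ge M \ge 413 > 202$, so $1 \le 201 \le \delta$, and $200(\delta-202) \ge 200(M-202) \ge t+k \ge b(t+k+1,t,k)_\alpha$; hence $x=201$ meets the requirements and $e \le 201$. This is exactly parallel to the final part of the proof of Lemma \ref{oov2.1}, now run with $x = 201$ in place of $x = 104$. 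I do not expect a genuine obstacle here: the only point to watch is that one must pass to the integer bound $\delta \ge \lfloor (t+k)/102 \rfloor + 1$ before doing the arithmetic — the real-valued estimate $\delta > (t+k)/102$ alone would need $t+k$ slightly above $42040$, while the integer version leaves a comfortable margin.
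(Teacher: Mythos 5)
Your proof is correct and follows essentially the same route as the paper: both arguments rest on the bound $b(t+k+1,t,k)_\alpha \le t+k$ combined with the lower bound on $\delta$ coming from Lemma \ref{oov2} (the paper argues by contradiction from $e\ge 202$, while you verify directly that $x=201$ satisfies the defining conditions). Your passage to the integer bound $\delta \ge \lfloor (t+k)/102\rfloor +1$ is a worthwhile refinement rather than a cosmetic one: the paper's printed computation works only with the real-valued estimate (and moreover mis-cites Lemma \ref{oov2.1} for it and prints ``$\delta \le -1+(t+k)/102$'' with the wrong inequality sign), and, as you note, that cruder estimate alone would require $t+k$ slightly above $42040$, so your integrality step is what actually secures the lemma with the stated constant.
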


\begin{proof}
Assume $e\ge 202$. Since $b(t+k+1,t,k)_\alpha \le t+k$, we get $201(\delta -203) \\e k+t-1$. Since $\delta \le -1 + (t+k)/102$
by Lemma \ref{oov2.1}, we get $99(k+t) \le 203\cdot 201\cdot 102 -102$, which is false if $k+t\ge 42040$.
\end{proof}

\begin{lemma}\label{oov4}
Fix integers $\alpha \ge 0$, $u\ge 0$, and set $s:= t+k+1+2u$ and $\delta := a(s+2,t,k)_\alpha -a(s,t,k)_\alpha$. Assume $s+1 >\alpha$. Assume $t\ge
k \ge t/200$ and $k\ge 4$  and $s \ge 1157520$ (resp. $s >78$, $t\ge k \ge t/3$ and $k\ge 4$). Let $e$ be the minimal integer $x$ such that $1\le x \le
\delta$ and
$b(s,t,k)_\alpha \le (x-1)(\delta -x-1)$. Then $e\le 104$ (resp. $e\le 5)$.
\end{lemma}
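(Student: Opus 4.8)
The plan is to convert the statement into a couple of elementary comparisons of quantities that are linear in $s$, playing the crude lower bound for $\delta$ furnished by Lemma \ref{oov2} against the a priori bound $b(s,t,k)_\alpha\le s-2$ contained in (\ref{eqo4.1}).

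First I would dispose of the easy case: by definition $e\le\delta$, so if $\delta\le 104$ (resp.\ $\delta\le 5$) there is nothing to prove, and I may assume $\delta\ge 105$ (resp.\ $\delta\ge 6$). Next, since $s+1>\alpha$ and $s\ge 1157520$ (resp.\ $s>78$), Lemma \ref{oov2} gives $\delta>-1+s/102$ (resp.\ $\delta>-1+s/3$); this is also what makes $e$ well defined, because $\max_{1\le x\le\delta}(x-1)(\delta-x-1)$ is of order $\delta^2$, hence (by the size of $s$) much larger than the bound $b(s,t,k)_\alpha\le s-2$, so at least one admissible $x$ exists.

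The core step is the observation that to prove $e\le 104$ it suffices to exhibit a single admissible value $x\le 104$; I would simply test $x=104$, so the task reduces to verifying $b(s,t,k)_\alpha\le(104-1)(\delta-104-1)=103(\delta-105)$. Here I would split according to $u$: if $u\ge 1$ then $s\ge t+k+3$ and (\ref{eqo4.1}) gives $b(s,t,k)_\alpha\le s-2$; if $u=0$ then $s=t+k+1$ and (\ref{eqo4}) gives $b(t+k+1,t,k)_\alpha\le t+k=s-1$, a case which is really nothing but Lemma \ref{oov2.1}. In all cases $b(s,t,k)_\alpha\le s-1$, so it is enough to have $s-1\le 103(\delta-105)$, i.e.\ $\delta\ge 105+(s-1)/103$; substituting $\delta>-1+s/102$ this amounts to the single inequality $s\bigl(\tfrac1{102}-\tfrac1{103}\bigr)\ge 106-\tfrac1{103}$, i.e.\ $s\ge 10506\cdot 106-102=1113534$, which holds comfortably since $s\ge 1157520$. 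The ``resp.'' assertion is the identical computation with $(104,103,102)$ replaced by $(5,4,3)$: $x=5$ is admissible as soon as $\delta\ge 6+(s-2)/4$ (using $b\le s-2$ when $u\ge1$, and Lemma \ref{oov2.1} for $u=0$), and $\delta>-1+s/3$ together with $s>78$ yields this.

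The only real difficulty is the bookkeeping with the explicit constants. One must be careful to invoke the sharper bound $b(s,t,k)_\alpha\le s-2$ only when $u\ge 1$, reverting to $b(t+k+1,t,k)_\alpha\le t+k$ (equivalently, to Lemma \ref{oov2.1}) when $u=0$, and then to check that the large-$s$ hypothesis indeed dominates the constant coming out of the Lemma \ref{oov2} estimate. There is no geometric content in this lemma; every inequality above is a one-line calculation.
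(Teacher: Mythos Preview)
Your argument is correct and is essentially the same as the paper's: both reduce to the single test value $x=104$ (resp.\ $x=5$), bound $b(s,t,k)_\alpha$ by roughly $s$, and then feed in the lower bound $\delta>-1+s/102$ (resp.\ $\delta>-1+s/3$) from Lemma~\ref{oov2}. If anything you are slightly more careful than the paper, since you separate the case $u=0$ (where the bound on $b$ is $t+k$ rather than $s-2$ and one really falls back on Lemma~\ref{oov2.1}) and you use the exact constant $103(\delta-105)$ instead of the paper's looser $103(\delta-202)$.
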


\begin{proof}
The integer $e$ is defined by Lemma \ref{oov2}. Since $b(s,t,k)_\alpha \le s-2$, it is enough to check that $s-2 \le
103(\delta -202)$ (resp. $s-2
\le 4(\delta -6)$, which is true, because Lemma \ref{oov2} gives $\delta \ge -1 +\frac{s}{102}$ (resp. $\delta >-1 +\frac{s}{3}$)
and we assumed $s \ge (2+103\cdot 106)\cdot 106 =1157520$ (resp. $s>78$).
\end{proof}

\begin{lemma}\label{oov4=}
Fix integers $\alpha \ge 0$, $u\ge 0$, and set $s:= t+k+1+2u$ and $\delta := a(s+2,t,k)_\alpha -a(s,t,k)_\alpha$. Assume $s+1 >\alpha$ and $t\ge
k \ge t/200$,  $k\ge 4$  and $s \ge 42674$. Let $e$ be the minimal integer $x$
such that $1\le x \le
\delta$ and
$b(s,t,k)_\alpha \le (x-1)(\delta -x-1)$. Then $e\le 201$.
\end{lemma}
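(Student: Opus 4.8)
The plan is to run exactly the argument of Lemma \ref{oov4}, only now targeting the weaker bound $e\le 201$ instead of $e\le 104$; this is precisely what permits the far weaker hypothesis $s\ge 42674$ in place of $s\ge 1157520$. First I would recall that, by definition, $e$ is the least integer $x$ with $1\le x\le\delta$ and $b(s,t,k)_\alpha\le (x-1)(\delta-x-1)$, and that this is meaningful because Lemma \ref{oov2}, applied under the standing hypotheses $t\ge k\ge t/200$, $k\ge 4$ and $s+1>\alpha$, gives $\delta>-1+s/102$; in particular $\delta$ is large (certainly $\delta\ge 202$) once $s\ge 42674$, so that $x=201$ is an admissible value of $x$.

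Next, by minimality of $e$ it is enough to exhibit one admissible $x\le 201$, and the natural choice is $x=201$, for which $(x-1)(\delta-x-1)=200(\delta-202)$. Thus I would reduce the assertion $e\le 201$ to the single inequality $b(s,t,k)_\alpha\le 200(\delta-202)$. Since $b(s,t,k)_\alpha\le s-2$ by (\ref{eqo4.1}) (the case $u=0$, where instead $b(t+k+1,t,k)_\alpha\le t+k$, being already covered by Lemma \ref{oov2.1=}), it suffices to prove the purely numerical statement $s-2\le 200(\delta-202)$; note this inequality in turn forces $\delta\ge 202$, so the admissibility condition $201\le\delta$ comes for free.

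The final step is then a one-line computation: the bound $\delta>-1+s/102$ of Lemma \ref{oov2} gives $200(\delta-202)>200(s/102-203)=\tfrac{100}{51}s-40600$, and the inequality $\tfrac{100}{51}s-40600\ge s-2$ is equivalent to $\tfrac{49}{51}s\ge 40598$, i.e. to $s\ge 40598\cdot 51/49\approx 42255$. Since we assume $s\ge 42674$ this holds with room to spare; as usual the threshold $42674$ is not optimal, but it is the value fixed once and for all so as to work uniformly with $\alpha=202$ (and to match the thresholds in Lemmas \ref{oov2.1=} and \ref{oov4=}).

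I expect the only real ``obstacle'' here to be bookkeeping: carrying the constants $200\cdot 203$ and $\tfrac{49}{51}$ through correctly, and invoking integrality of $\delta$ and of $b(s,t,k)_\alpha$ at the appropriate moments (the latter via $b(s,t,k)_\alpha\le s-2$). There is no geometric input whatsoever beyond the estimate of Lemma \ref{oov2}, exactly as in the proof of Lemma \ref{oov4}.
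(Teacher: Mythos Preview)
Your argument is correct and is essentially the paper's own proof: reduce to the single inequality $b(s,t,k)_\alpha\le 200(\delta-202)$ via minimality of $e$, bound $b(s,t,k)_\alpha$ by $s-2$, and plug in $\delta>-1+s/102$ from Lemma~\ref{oov2}. Your numerics ($\tfrac{49}{51}s\ge 40598$, i.e.\ $s\gtrsim 42255$) are in fact cleaner than what is printed in the paper, which contains an evident typo (``$s-2\le 98(\delta-203)$'' in place of ``$s-2\le 200(\delta-202)$''); your version is the intended one, and your extra remark separating off the case $u=0$ via Lemma~\ref{oov2.1=} is a welcome bit of care that the paper leaves implicit.
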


\begin{proof}
The integer $e$ is defined by Lemma \ref{oov2}. Since $b(s,t,k)_\alpha \le s-2$, it is enough to check that $s-2 \le
98(\delta -203)$, which is true, because Lemma \ref{oov2} gives $\delta \ge -1 +\frac{s}{102}$ and we assumed $s \ge 42674$.
\end{proof}

\begin{lemma}\label{==o1}
If $t\ge k$ and $k^2\ge (\alpha +5)(k+2)$, then $2tk \ge (\alpha +5)(t+k+4)$.
\end{lemma}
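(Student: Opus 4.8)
The plan is to isolate the combination $\beta:=\alpha+5$ and to read the desired inequality $2tk\ge\beta(t+k+4)$ as a statement about the linear function $t\mapsto 2tk-\beta(t+k+4)=t(2k-\beta)-\beta(k+4)$. If its leading coefficient $2k-\beta$ is positive, then this function is nondecreasing on the range $t\ge k$, so its minimum there is attained at $t=k$, and at $t=k$ the inequality will turn out to be exactly the hypothesis. Thus the whole lemma should reduce to checking one sign and then substituting $t=k$.

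First I would verify $2k-\beta>0$ using only the hypothesis. Indeed $k^2\ge\beta(k+2)$ forces $k>0$ and $\beta\le k^2/(k+2)$, and $k^2/(k+2)<2k$ because $k^2<2k(k+2)=2k^2+4k$; hence $2k-\beta>0$. Then, using $t\ge k$ together with $2k-\beta>0$,
\begin{equation*}
2tk-\beta(t+k+4)=t(2k-\beta)-\beta(k+4)\ge k(2k-\beta)-\beta(k+4)=2k^2-2\beta k-4\beta=2\bigl(k^2-\beta(k+2)\bigr)\ge 0,
\end{equation*}
the last inequality being precisely the assumption $k^2\ge(\alpha+5)(k+2)$. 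Rewriting $\beta=\alpha+5$ gives $2tk\ge(\alpha+5)(t+k+4)$, as desired.

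I do not anticipate any real obstacle here; the only point that deserves a moment's care is that the sign of $2k-\beta$ must be extracted from the hypothesis itself rather than from an auxiliary lower bound on $k$, and that is immediate from $k^2\ge\beta(k+2)$. As a sanity check one may also note that the hypothesis is vacuous unless $k$ is moderately large — since $\alpha\ge 0$ forces $\beta\ge 5$, and $k^2\ge 5(k+2)$ requires $k\ge 7$ — but this remark is not used in the argument.
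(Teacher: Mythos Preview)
Your proof is correct and follows essentially the same approach as the paper's: the paper sets $\gamma(t,k):=2tk-(\alpha+5)(t+k+4)$, checks that $\gamma(k,k)\ge 0$ is exactly the hypothesis, and then uses $\partial_t\gamma(t,k)=2k-\alpha-5$ to conclude. Your write-up is in fact a bit more careful, since you explicitly derive the positivity of $2k-\beta$ from the hypothesis $k^2\ge\beta(k+2)$, whereas the paper leaves that step implicit.
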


\begin{proof}
Set $\gamma (t,k):= 2tk-(\alpha +5)(t+k+4)$. We have $\gamma (k,k) \ge 0$ if $k^2\ge (\alpha +5)(k+2)$. To get the lemma use that $\partial _t \gamma (t,k)
= 2k -\alpha -5$.
\end{proof}

\begin{lemma}\label{oov5}
Assume $ k\le t \le 200k$, $2tk \ge (\alpha +5)(t+k+4)$, $\alpha \le -1+(t+k)/102$ and take an integer $s\ge t+k+1$ with 
$s\equiv t+k+1 \pmod{2}$. Then $\delta := a(s+2,t,k)_\alpha-a(s,t,k)_\alpha \le 2a(s,t,k)_\alpha
-\alpha$ and $\tau := a(s+4,t,k)_\alpha -a(s+2)_\alpha \leq
2a(s,t,k)_\alpha +
\alpha -1$.
\end{lemma}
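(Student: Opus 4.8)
The plan is to reduce each of the two inequalities, by means of the recursion (\ref{eqo4})--(\ref{eqo4.1}) and the identity (\ref{eqoov0}), to an explicit lower bound for $a(s,t,k)_\alpha$, and then to verify that bound using the closed formula for $I_{t,k}$ from Lemma \ref{oov1}; only the case $s=t+k+1$ turns out to be tight, and that is exactly where the hypothesis $2tk\ge(\alpha+5)(t+k+4)$ is consumed. Concretely, $\delta=a(s+2,t,k)_\alpha-a(s,t,k)_\alpha\le 2a(s,t,k)_\alpha-\alpha$ is equivalent to $a(s+2,t,k)_\alpha+\alpha\le 3a(s,t,k)_\alpha$, so from (\ref{eqoov0}) together with $0\le b(\cdot,t,k)_\alpha\le \cdot-1$ I would take $b(s+2,t,k)_\alpha\ge 0$ to bound $a(s+2,t,k)_\alpha$ above and $b(s,t,k)_\alpha\le s-1$ to bound $a(s,t,k)_\alpha$ below; writing $s=t+k+1+2u$ and substituting the cubic (in $u$) expression for $I_{t,k}(s)$, the claim becomes a polynomial inequality in $u,t,k,\alpha$ after clearing the positive denominators. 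The inequality $\tau=a(s+4,t,k)_\alpha-a(s+2,t,k)_\alpha\le 2a(s,t,k)_\alpha+\alpha-1$ is treated identically, bounding $a(s+4,t,k)_\alpha$ above and $a(s+2,t,k)_\alpha$, $a(s,t,k)_\alpha$ below.

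For $u\ge 1$ these polynomial inequalities should hold with room to spare. Since $I_{t,k}(s)$ has leading term $\tfrac{4}{3}u^{3}$ in $u$, the quantity $a(s,t,k)_\alpha$ grows like $u^{2}$ while the one-step difference $a(s+2,t,k)_\alpha-a(s,t,k)_\alpha$ grows only like $u$; moreover, whenever the right-hand side of the polynomial inequality is non-positive — which happens as soon as $d_{t,k}$ is large compared with $s^{2}$, in particular when $t$ is much larger than $k$ — there is nothing to prove, and otherwise $k\le t\le 200k$ forces $d_{t,k}=O((t+k)^{2})$, so the cubic growth of $I_{t,k}$ dominates; the correction $\alpha(s-t-k-1)/2$ is harmless because $\alpha\le -1+(t+k)/102\le s/102$. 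A short telescoping argument in the spirit of Lemma \ref{L-recI} should make this precise, reducing the range $u\ge 1$ to checking finitely many small values of $u$ uniformly in $t,k$, all of which are again consequences of the same cubic dominance.

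The decisive case is $s=t+k+1$ (i.e. $u=0$). Here $I_{t,k}(t+k+1)=2tk+3(t+k)+4$ and (\ref{eqo4}) gives $(t+k+1)a(t+k+1,t,k)_\alpha+b(t+k+1,t,k)_\alpha=2tk+3(t+k)+3$ with $0\le b(t+k+1,t,k)_\alpha\le t+k$, whence $a(t+k+1,t,k)_\alpha\ge(2tk+2(t+k)+3)/(t+k+1)$, while (\ref{eqoov0}) at $s=t+k+3$ gives $a(t+k+3,t,k)_\alpha\le(16+10(t+k)+4tk-\alpha)/(t+k+2)$. Inserting these into $a(t+k+3,t,k)_\alpha+\alpha\le 3a(t+k+1,t,k)_\alpha$, clearing denominators and writing $w=t+k$, the claim becomes
$$(w+1)\bigl(16+10w+4tk+\alpha(w+1)\bigr)\ \le\ (w+2)\bigl(6tk+6w+9\bigr),$$
and after substituting $2tk\ge(\alpha+5)(w+4)$ the excess of the right side over the left is at least $(\alpha+5)(w+4)^{2}-\alpha(w+1)^{2}-4w^{2}-5w+2=\alpha(6w+15)+w^{2}+35w+82>0$, which closes this case. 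The parallel reduction for $\tau$ at $s=t+k+1$ is slacker, ending with an inequality of the type $16tk+w^{3}-7w^{2}-27w-11\ge 0$, true since $\alpha\ge 0$ forces $w=t+k\ge 102$.

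I expect the only real difficulty to be bookkeeping: keeping the bounds $0\le b(\cdot,t,k)_\alpha\le\cdot-1$ and the various $O(1)$ additive constants coherent when passing between (\ref{eqo4}), (\ref{eqo4.1}) and (\ref{eqoov0}) (which differ by a small shift near $s=t+k+1$), and running the ``finitely many small $u$'' verification of the range $u\ge 1$ with estimates uniform in $t$ and $k$; this last point is where $t\le 200k$ is genuinely needed, to bound $d_{t,k}$ above by $O((t+k)^{2})$. No geometry enters the argument: everything rests on the recursion for $a$ and $b$, the explicit formula for $I_{t,k}$ in Lemma \ref{oov1}, and Lemma \ref{oov2}.
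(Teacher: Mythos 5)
Your overall strategy is essentially the paper's: both proofs are purely numerical, both consume the hypothesis $2tk\ge(\alpha+5)(t+k+4)$ exactly at the initial value $s=t+k+1$, and both must then propagate the inequality to all larger $s$. Your worked base case for $\delta$ (the reduction to $(w+1)(16+10w+4tk+\alpha(w+1))\le(w+2)(6tk+6w+9)$ and the excess $\alpha(6w+15)+w^{2}+35w+82>0$) is correct and is the analogue of the paper's contradiction at $s=t+k+1$. The difference is in how the case $s\ge t+k+3$ is handled: the paper assumes the inequality fails at $s$, shows the resulting estimate is false at $s=t+k+1$, and then runs an induction comparing the increments of the two sides, using Lemmas \ref{oov2} and \ref{oov2.1} to bound $a(s,t,k)_\alpha-a(s-2,t,k)_\alpha$ and $a(s-2,t,k)_\alpha$ from below; you instead propose direct crude bounds (take $b\ge 0$ above, $b\le s-1$ below) and a polynomial verification.

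The genuine gap is that this propagation step is never carried out: for $u\ge 1$ you only assert that the polynomial inequality ``should hold with room to spare'' by ``cubic dominance,'' and for the second inequality (the bound on $\tau$) even the base case is left as ``an inequality of the type $16tk+w^{3}-7w^{2}-27w-11\ge 0$.'' This is precisely where the paper does its real work, and it is not mere bookkeeping. Writing $s=t+k+1+2u$, $w=t+k$, the leading-order form of your reduced inequality in the regime $u$ bounded and $w$ large is $2tk(2u+1)\gtrsim (t+k)^{2}/102$; with $t=200k$ this has slack only of a bounded factor (about $2u+1$ against $\approx 0.99$), so the lower-order terms (the $b$-defects, the $\alpha$-corrections with $\alpha$ as large as $w/102$) cannot be waved away and must be controlled uniformly in $t,k$, either by an honest telescoping of increments as in the paper or by an exact polynomial computation. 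Moreover your supporting heuristic is backwards: within $k\le t\le 200k$ the inequality does \emph{not} become easier when $t$ is much larger than $k$ — the factor $tk/(t+k)^{2}$ shrinks as $t/k$ grows, so $t/k$ near $200$ and $u$ small is the tight regime, not a trivial one — and there is no a priori reduction to ``finitely many small values of $u$'' without actually proving the increment comparison. Until that induction (and the explicit $\tau$ base case) is written out, the lemma is not proved; the approach itself, however, is viable and would succeed with the same kind of increment estimates the paper uses.
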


\begin{proof}
Assume $\delta \ge 2a(s,t,k)_\alpha -\alpha +2$. Since $b(s+2,t,k)_\alpha \ge 0$ and $b(s,t,k)_\alpha \le s-1$, (\ref{eqoov4}) gives
\begin{equation}\label{eqoov5} 2d_{t,k} + (2s+4)a(s,t,k)_\alpha  \le (s+3)^2+\alpha (s+1) -s-3\end{equation}
First assume $s=t+k+1$. In this case (\ref{eqoov5}) is equivalent to
\begin{equation}\label{eqoov5.1}
(2t+2k+6)a(t+k+1,t,k)_\alpha \le \alpha (t+k+2) +2tk +6t+6k+12
\end{equation}
From (\ref{eq==o1}) we get $4a(t+k+1,t,k)_\alpha +2tk\le \alpha (t+k+2) +2t+2k+6$, which is false if $2tk \ge (\alpha +2)(t+k+2)$.

Now assume $s\ge t+k+3$ and that (\ref{eqoov5}) is false for the integer $s-2$. Since $(s+3)^2+\alpha (s+1) -(s+1)^2 -\alpha (s-1) = 4s+2\alpha +8$, it
is sufficient to use that $(2s+4)a(s,t,k)_\alpha -2s\cdot a(s-2,t,k)_\alpha = 4a(s-2,t,k)_\alpha +(2s+4)(a(s,t,k)_\alpha-a(s-2,t,k)_\alpha) \ge 4(k+3) +4s$ by Lemmas \ref{oov2.1} and \ref{oov2}.

Now assume $\tau \ge  2a(s,t,k)_\alpha +2-\alpha$. From (\ref{eqo4.1}) for the integer $s'= s+4$ and using that $b(s+4,t,k)_\alpha \le s+2$
and $b(s+2,t,k)_\alpha \ge 0$ and $2a(s+2,t,k) \ge 2a(s,t,k) -2+s/51$, we get
\begin{equation}\label{eqoov4}
2d_{t,k} -2+s/51+ (2s+8)a(s,t,k)_\alpha +2s+6 -(s+3)\alpha \le (s+5)^2
\end{equation}
First assume $s=t+k+1$. In this case (\ref{eqoov4}) is equivalent to
\begin{equation}\label{eqoov4.1}
(2t+2k+10)a(t+k+1,t,k)_\alpha \le \alpha (t+k+4) +2tk +9t+9k+30 -(t+k+1)/51
\end{equation}
From (\ref{eqoov4.1}) we get $8a(t+k+1,t,k)_\alpha + 2tk \le \alpha (t+k+4) +5t+5k+24-(t+k+1)/51$, which is false if $2tk \ge (\alpha +5)(t+k+4)$.

Now assume $s\ge t+k+3$ and that (\ref{eqoov4}) is false for the integer $s-2$. Since $(s+5)^2 -(s+3)^2 = 4s+16$ and $2s+8-2(s-2)-8=4$, to get that  (\ref{eqoov4}) is false for the integer $s$
it is sufficient to use that $a(s,t,k)_\alpha -a(s-2,t,k)_\alpha \ge 2+\alpha$, which is true by Lemma \ref{oov2} and the assumption $s\ge t+k+3$, our assumptions on $t$, $k$ and $\alpha$.
\end{proof}

\begin{lemma}\label{oov6}
Assume $t \ge k\ge t/200$, $t+k\ge 102 (\alpha +27)$. Then
$g(s,t,k)_\alpha
\ge 26$.
\end{lemma}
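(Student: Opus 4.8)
The plan is to reduce the claim to the one-step lower bound on the differences of $a(s,t,k)_\alpha$ already proved in Lemma \ref{oov2}. Recall that for $s\ge t+k+3$ one has, writing $s=t+k+1+2u$ with $u\ge 1$, $s_j:=t+k+1+2j$ and $\delta_j:=a(s_j+2,t,k)_\alpha-a(s_j,t,k)_\alpha$ for $0\le j\le u-1$, the telescoped identity $g(s,t,k)_\alpha=\sum_{j=0}^{u-1}\bigl(\delta_j-\alpha\bigr)$; this is immediate from $g(t+k+1,t,k)_\alpha=0$ together with the recursion $g(s,t,k)_\alpha=g(s-2,t,k)_\alpha+a(s,t,k)_\alpha-a(s-2,t,k)_\alpha-\alpha$ (and agrees with the closed form $g(s,t,k)_\alpha=a(s,t,k)_\alpha-a(t+k+1,t,k)_\alpha-\alpha(s-t-k-1)/2$). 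Hence it suffices to show $\delta_j-\alpha\ge 26$ for each $j$, because then $g(s,t,k)_\alpha\ge 26u\ge 26$.

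First I would check the hypotheses needed to apply Lemma \ref{oov2} to each $s_j$. From $t\ge k\ge t/200$ we get $t+k\le 201k$, so the standing assumption $t+k\ge 102(\alpha+27)$ forces $k\ge 14$, and in particular $t\ge k\ge 4$; moreover $s_j\ge t+k+1>t+k\ge 102\alpha\ge\alpha$, so $s_j+1>\alpha$. Thus the first (the ``$t/200$'') case of Lemma \ref{oov2} applies and gives $\delta_j>-1+s_j/102$.

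Then, using $s_j\ge t+k+1>t+k\ge 102(\alpha+27)$, I obtain $\delta_j>-1+(\alpha+27)=\alpha+26$; since $\delta_j$ and $\alpha$ are integers this upgrades to $\delta_j\ge\alpha+27$, i.e.\ $\delta_j-\alpha\ge 27>26$. Summing over the $u\ge 1$ steps yields $g(s,t,k)_\alpha\ge 27u\ge 27\ge 26$ for every $s\ge t+k+3$ with $s\equiv t+k+1\pmod 2$, which is exactly the range in which the assertion is used (for $s\in\{t+k-1,t+k+1\}$ one has $g(s,t,k)_\alpha=0$). I do not expect a genuine difficulty here: the only points that require care are confirming that $t+k\ge 102(\alpha+27)$ is strong enough both to guarantee $s_j+1>\alpha$ and to meet the ``$k\ge 4$'' requirement implicitly used in the proof of Lemma \ref{oov2} (through Lemma \ref{oov3}), and keeping track of strict versus non-strict inequalities so that the integrality of $\delta_j$ can be exploited. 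Everything else is a one-line telescoping estimate.
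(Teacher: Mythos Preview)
Your proposal is correct and follows essentially the same route as the paper's proof: both reduce to the one-step bound $\delta_j>-1+s_j/102$ from Lemma \ref{oov2} and then use $t+k\ge 102(\alpha+27)$ to force $\delta_j-\alpha\ge 27$. The paper phrases this as a base case $s=t+k+3$ together with the monotonicity $g(s,t,k)_\alpha\ge g(s-2,t,k)_\alpha$, whereas you telescope; the content is identical. You are in fact more explicit than the paper in verifying the side conditions needed to invoke Lemma \ref{oov2} (notably $k\ge 4$ via $t+k\le 201k$, and $s_j+1>\alpha$), which is a welcome bit of care.
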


\begin{proof}
We first do the case $s=t+k+3$. We have $g(t+k+3,t,k)_\alpha = a(t+k+3,t,k)_\alpha -a(t+k+1,t,k)_\alpha -\alpha$. Lemma
\ref{oov2} gives $a(t+k+3,t,k)_\alpha -a(t+k+1,t,k)_\alpha > -1 +(t+k+1)/102$ (resp. $a(t+k+3,t,k)_\alpha
-a(t+k+1,t,k)_\alpha > -1 +(t+k+1)/3$). Now assume $s\ge t+k+5$. By induction on
$s$ it is sufficient to prove that $g(s,t,k)_\alpha  \ge g(s-2,t,k)_\alpha$,
i.e. $a(s,t,k)_\alpha -a(s-2,t,k)_\alpha \ge
\alpha$, which is true by Lemma
\ref{oov2}.
\end{proof}

\begin{lemma}\label{oov2.1}
Assume $t\ge k\ge 4$ and $t\le 200k$. Then $a(t+k+1,t,k)_\alpha \ge (t+k)/102$. If $\alpha \le -1+(t+k)/102$, then $a(t+k+3,t,k)_\alpha -a(t+k+1,t,k)_\alpha
\le 1+2a(t+k+1,t,k)_\alpha$. Let $e$ be the minimal integer $x$ such that $1\le x \le \delta:= a(t+k+3,t,k)_\alpha
-a(t+k+1,t,k)_\alpha$ and
$b(t+k+1,t,k)_\alpha
\le (x-1)(\delta -x-1)$. Assume $\alpha \le -1+(t+k)/102$ and $t+k\ge 1113636$. We have $e\le 104$.
\end{lemma}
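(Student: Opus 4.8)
The plan is to establish the three assertions in order, each time reducing the recursive definitions of $a(\cdot)$ and $b(\cdot)$ near the initial level $s=t+k+1$ to an explicit polynomial inequality in $t,k$ and then checking it on the cone $k\le t\le 200k$. For the first assertion, note that $I_{t,k}(t+k-1)=0$, so Lemma \ref{oov1} gives $I_{t,k}(t+k+1)=(t+k+2)^2-(t^2+k^2+t+k)=2tk+3(t+k)+4$; inserting this into relation (\ref{eqo4}) at level $t+k+1$ (the $\alpha$-correction vanishing there, as $s-t-k-1=0$) and using $0\le b(t+k+1,t,k)_\alpha\le t+k$ gives inequality (\ref{eq==o1}), that is $(t+k+1)a(t+k+1,t,k)_\alpha\ge 2tk+2(t+k)+3$. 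If the asserted lower bound failed we would get $(t+k)^2-1=(t+k+1)(t+k-1)>102\bigl(2tk+2(t+k)+3\bigr)$, hence $t^2+k^2-202tk>204(t+k)+307>0$; but with $t=rk$, $r\in[1,200]$, we have $t^2+k^2-202tk=k^2(r^2-202r+1)\le -200k^2<0$, a contradiction. This is where the hypothesis constant $200$ and the constant $102$ must be compatible.

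For the second assertion, assume $\alpha\le -1+(t+k)/102$; this keeps $a(t+k+3,t,k)_\alpha$ well defined and nonnegative. It suffices to prove $a(t+k+3,t,k)_\alpha+1\le 3a(t+k+1,t,k)_\alpha$, since then $\delta:=a(t+k+3,t,k)_\alpha-a(t+k+1,t,k)_\alpha\le 2a(t+k+1,t,k)_\alpha-1\le 1+2a(t+k+1,t,k)_\alpha$. Now $I_{t,k}(t+k+3)=20+10(t+k)+4tk$ by (\ref{eqoov1}), so (\ref{eqoov0}) at level $t+k+3$ together with $b\ge 0$ and $\alpha\ge 0$ gives $a(t+k+3,t,k)_\alpha\le\bigl(20+10(t+k)+4tk\bigr)/(t+k+2)$, while (\ref{eq==o1}) divided by $t+k+1$ gives $a(t+k+1,t,k)_\alpha\ge\bigl(2tk+2(t+k)+3\bigr)/(t+k+1)$. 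The leading $tk$-coefficients are $4$ and $6$, so after clearing denominators the desired estimate reduces to $2tk(t+k)+8tk\ge 5(t+k)^2+12(t+k)+4$, which holds on the cone once $t+k$ is of moderate size — and the hypothesis $\alpha\le -1+(t+k)/102$ forces $t+k$ to be at least that large.

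For the third assertion, recall that $e$ is the least integer $x$ with $1\le x\le\delta$ and $b(t+k+1,t,k)_\alpha\le (x-1)(\delta-x-1)$. Since $b(t+k+1,t,k)_\alpha\le t+k$ and $(104-1)(\delta-104-1)=103(\delta-105)$, it is enough to check that $\delta\ge 104$ and $103(\delta-105)\ge t+k$. The case $s=t+k+1$ of Lemma \ref{oov2} gives $\delta>-1+(t+k+1)/102$, so $103(\delta-105)>\frac{103(t+k+1)}{102}-103\cdot106$; since $\frac{103(t+k+1)}{102}-(t+k)=\frac{(t+k)+103}{102}$, the inequality $103(\delta-105)\ge t+k$ reduces to $(t+k)+103\ge 102\cdot103\cdot106=1113636$, which follows from the hypothesis $t+k\ge 1113636$. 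The same bound gives $\delta>104$ and shows that the maximum of $(x-1)(\delta-x-1)$ over $1\le x\le\delta$ (roughly $(\delta/2)^2$) comfortably exceeds $t+k\ge b(t+k+1,t,k)_\alpha$, so $e$ is well defined.

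The main difficulty is not any single estimate but making the constants cohere: the lemma exists so that at the initial level one can fit a grid of degree $\delta$ on the quadric whose singular locus offers at least $b(t+k+1,t,k)_\alpha$ nodes, and this forces $102$, $104$ and the threshold $1113636$ to be chosen together so that (i) $t^2+k^2<202tk$ on $k\le t\le 200k$, (ii) the comparison of leading coefficients $4$ and $6$ in the second part survives the lower-order terms, and (iii) $\delta$ grows linearly in $t+k$ with slack $103$ left for the third part. The tightest requirement, which pins down $1113636$, is (iii).
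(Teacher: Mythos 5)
Your proposal is correct and follows essentially the same route as the paper's own proof: the defining relation at level $t+k+1$ together with $b(t+k+1,t,k)_\alpha\le t+k$ gives $(t+k+1)a(t+k+1,t,k)_\alpha\ge 2tk+2(t+k)+3$ and a contradiction on the cone $k\le t\le 200k$; the second assertion is reduced to comparing the upper bound $(20+10(t+k)+4tk)/(t+k+2)$ for $a(t+k+3,t,k)_\alpha$ with the lower bound for $a(t+k+1,t,k)_\alpha$; and the last assertion uses the case $s=t+k+1$ of Lemma \ref{oov2} and the test value $x=104$. Two small remarks: your inequality $103(\delta-105)\ge t+k$ is the arithmetically correct form of the paper's step (which misprints $103(\delta-202)$ but uses the same threshold $102\cdot 103\cdot 106=1113636$), and your reduced inequality $2tk(t+k)+8tk\ge 5(t+k)^2+12(t+k)+4$ is indeed valid under $k\ge 4$, $t\le 200k$ and $t+k\ge 102$ (the latter forced by $\alpha\ge 0$), but you should carry out that verification explicitly (the worst case is $k=4$, $t=t+k-4$, where it reduces to $3(t+k)^2\ge 12(t+k)+132$) rather than asserting it holds ``once $t+k$ is of moderate size.''
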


\begin{proof}
Since
$(t+k+2)^2 -(t^2+k^2+t+k) = 2tk +3t+3k+4$, Lemma
\ref{oov1} for
$s=t+k-1$ gives
$(t+k+1)a(t+k+1,t,k)_\alpha+1+b(t+k+1,t,k)_\alpha = 2tk+3t+3k+4$.
Since $b(t+k+1,t,k)_\alpha \le t+k$, we get
$(t+k+1)a(t+k+1,t,k)_\alpha \ge 2tk+2t+2k+3$. If $a(t+k+1,t,k)_\alpha \le (t+k-1)/102$, then $(t+k+1)(t+k-1) \ge 102(2tk+2t+2k+3)$, which is false for all positive $k$ if $t\le 200k$.

Now assume $\alpha \le -1+(t+k)/102$. To prove the second assertion of the lemma it is sufficient to prove
that $a(k+t+3,t,k)_\alpha +1 \le 3a(k+t+1,t,k)_\alpha$. We have $I_{t,k}(t+k+3) =
\frac{4}{6}[(30+15(t+k)+6kt] = 20+10(t+k)+4kt$ and $a(t+k+3,t,k)_\alpha \le (I_{t,k}(t+k+3)+4)/(t+k+2)
=(20+10(t+k)+4kt)/(t+k+2)$, while $a(t+k+1)_\alpha \le (I_{t,k}(t+k+1) -1-t-k)/(t+k+1) = 2(t+k+tk)/(t+k+1)$.

Since
$b(t+k+1,t,k)_\alpha
\le t+k$, to prove the last assertion of the lemma it is sufficient to observe that $103(\delta -202)
\ge t+k$ by the case $s=t+k+1$ of Lemma
\ref{oov2} and the assumption $t+k\ge 1113636=106\cdot 103\cdot 102$, which gives that $103(-1+\frac{t+k+1}{102} -202) \ge
t+k$.\end{proof}

\begin{lemma}\label{oov4}
Fix integers $\alpha \ge 0$, $u\ge 0$, and set $s:= t+k+1+2u$ and $\delta := a(s+2,t,k)_\alpha -a(s,t,k)_\alpha$. Assume $s+1
>\alpha$, $t\ge k \ge t/200$, $k\ge 4$  and $s \ge 1157520$. Let $e$ be the minimal integer $x$ such that $1\le x \le
\delta$ and
$b(s,t,k)_\alpha \le (x-1)(\delta -x-1)$. Then $e\le 104$.
\end{lemma}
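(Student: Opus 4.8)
The plan is to show that the index $x=104$ already satisfies the inequality defining $e$, which immediately forces $e\le 104$.

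First I would isolate the two ingredients. From the recursions defining $b$ --- namely $0\le b(s,t,k)_\alpha\le s-2$ in (\ref{eqo4.1}) when $s\ge t+k+3$, and $b(t+k+1,t,k)_\alpha\le t+k$ in (\ref{eqo4}) --- one gets the uniform crude bound $b(s,t,k)_\alpha\le s-1$ for every admissible $s=t+k+1+2u$ with $u\ge 0$. On the other side, the hypothesis $s+1>\alpha$ together with $t\ge k\ge t/200$ and $k\ge 4$ lets me apply Lemma \ref{oov2}, which gives $\delta>-1+s/102$. Since $s\ge 1157520$ this makes $\delta$ an integer vastly larger than $104$; in particular $1\le 104\le\delta$, so $x=104$ is legitimate in the range ``$1\le x\le\delta$'' and $(104-1)(\delta-104-1)=103(\delta-105)$ is positive.

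The only computation is then the inequality $b(s,t,k)_\alpha\le 103(\delta-105)$. Using $b(s,t,k)_\alpha\le s-1$ it suffices to check $s-1\le 103(\delta-105)$; substituting $\delta>-1+s/102$ reduces this to $s-1\le 103(s/102-106)=s+s/102-10918$, i.e. $s\ge 102\cdot 10917=1113534$, which is implied by $s\ge 1157520$. Hence $x=104$ satisfies $b(s,t,k)_\alpha\le (x-1)(\delta-x-1)$, so the minimal such $x$, which is $e$, is at most $104$.

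I do not expect any genuine obstacle here: the lemma is a bookkeeping corollary of the quantitative lower bound $\delta>-1+s/102$ from Lemma \ref{oov2} together with the universal upper bound $b(s,t,k)_\alpha\le s-1$. The only point deserving a little care is that one must actually know $\delta\ge 104$ before declaring the index $x=104$ admissible, which is exactly why the lower bound on $\delta$ (rather than merely $\delta\ge 1$) is needed and why the threshold on $s$ is taken this large; taking $s$ above the exact break-even value leaves comfortable slack in the final estimate.
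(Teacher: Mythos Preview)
Your proof is correct and follows essentially the same approach as the paper's: bound $b(s,t,k)_\alpha$ by roughly $s$, invoke Lemma~\ref{oov2} to get $\delta>-1+s/102$, and verify that $x=104$ satisfies the defining inequality once $s$ exceeds the stated threshold. If anything you are slightly more careful, since you use the uniform bound $b\le s-1$ which also covers the case $u=0$ (where only $b(t+k+1,t,k)_\alpha\le t+k=s-1$ is guaranteed), whereas the paper writes $b\le s-2$.
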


\begin{proof}
The integer $e$ is defined by Lemma \ref{oov2}. Since $b(s,t,k)_\alpha \le s-2$, it is enough to check that $s-2 \le
103(\delta -202)$, which is true, because Lemma \ref{oov2} gives $\delta \ge -1 +\frac{s}{102}$
and we assumed $s \ge (2+103\cdot 106)\cdot 106 =1157520$ .
\end{proof}

\section{Proofs of $A(s,t,k)_\alpha$ and $A(s,t,k)$}\label{SprA}

 \begin{remark}\label{exp1}  Assume $b(s,t,k)_\alpha >0$. Assume that $A(s,t,k)_\alpha$ is true and take $(Y,Q,T_1)$ satisfying it. Set $\delta := a(s+2,t,k)_\alpha -a(s,t,k)_\alpha$. Let $e$ be the maximal positive integer such that $b(s-2,t,k) > (e-1)(\delta -e-1)$ and $e\le \delta/2$. Write $T_1 = R_1\cup \cdots \cup R_e\cup M_1\cup \cdots \cup M_{\delta -e}$ with $R_j\in |\Oo_Q(1,0)|$ and $M_h\in |\Oo _Q(0,1)|$. Now we modify $T_1$ to a new grid $T\in |\Oo_Q(e',\delta -e')|$, $e'\in \{e,e+1\}$, in which some of the lines of $T$ does not meet $Y\cap Q$. In each case we also  describe a very specific set $S\subseteq \mathrm{Sing}(T_1)$ with $\sharp (S) =b(s-2,t,k)_\alpha$ and show why we have $h^1(\Ii _{C_{t,k}\cup Y\cup S}(s)) =0$, $i=0,1$, although $S$ is not a subset of $\mathrm{Sing}(T_1)$. In each case we will say that $T$ is the grid adapted for $A(t,s,k)$. We have $\deg (Y\cup T)=a(s+2,t,k)_{\alpha}$. In all cases we will check that we have $\chi (\Oo _{Y\cup T\cup \chi}) = g(s+2,t,k)_\alpha -g_{t,k}$.

  \quad (a) Assume $(e-1)(\delta -e-1)+\alpha -e \le  b(s-2,t,k)_\alpha \le e(\delta -e-1)$. In this case we will have $e' = e+1$. Take distinct lines $L_i\in |\Oo _Q(0,1)|$, $1\le i\le \delta -e-1$, such that $L_i =M_i$ if  $\alpha-e\le  i \le b(s,t,k)_\alpha - (e-1)(\delta -e-1)$, while 
  $L_i\cap  Y =\emptyset$ in all other cases. Take $R_0\in |\Oo _Q(1,0)|$ containing a point of $Y\cap (Q\setminus T_1)$.
  We take $T:= R_0\cup \cdots \cup R_e\cup L_1\cup \cdots \cup L_{\delta -e-1}$. We take as $S$ the union of all points $R_j\cap L_i$ with
 either $j>1$ or $j=1$ and $1\le i \le b(s-2,t,k)_\alpha - (e-1)(\delta -e-1)$. Each $L_j$ moves in a family of lines of $Q$ with $M_j$ in its limit. In this degeneration of some of the lines of $T$ to some of the lines of $T_1$ the set $S$ degenerate to a subset
 $S_1\subseteq \mathrm{Sing}(T_1)$ (although $T$ and $T_1$ are grids with different bidegrees). By $A(s,t.k)_\alpha$ we have $h^i(\Ii _{C_{t,k}\cup Y\cup S_1}(s)) =0$, $i=0,1$. By the semicontinuity theorem we have $h^i(\Ii _{C_{t,k}\cup Y\cup S_1}(s)) =0$, $i=0,1$.
  We have $T\in |\Oo _Q(e+1,\delta -e-1)|$, $\sharp (\mbox{Sing}(T))
= (e+1)(\delta-e-1)$ and $\sharp (Y\cap T)) =1+\sharp (Y\cap (T\setminus R_0)) = 1+e+b(s,t,k) - (e-1)(\delta -e-1)-\alpha+1+e = b(s,t,k) -(e-1)\delta +e^2+2e -\alpha+1$.
Hence $1-\chi (\Oo _{Y'})= p_a(Y) -\sharp (S)+ \sharp (\mbox{Sing}(T\cup R_0)) +\sharp (Y\cap (T\cup R_0)) -\deg (R_0\cup T) = g(s,t,k) -b(s,t,k) +b(s,t,k)
- (e-1)\delta +e^2+2e-\alpha +1 + (e+1)(\delta-e-1)-\delta = g(s,t,k)_\alpha+\delta -\alpha = g(s+2,t,k)_\alpha$.

  \quad (b) Assume $(e-1)(\delta -e-1) +e-1 \le b(s-2,t,k)_\alpha \le (e-1)(\delta -e-1) +\alpha -1-e$. We have $(e-1)(\delta -e) \le b(s-2,t,k)_\alpha \le (e-1)(\delta -e) +\alpha -2$. We take
  $e'=e$ with $L_i=M_i$ if either $i> \alpha -e$ or  $1\le i \le b(s,t,k)_\alpha-(e-1)(\delta -e)$, while for the other indices $i$'s $L_i$ is a general deformation of $M$. We take as $S$ the union of all points $R_j\cap L_i$ with
 either $j>1$ or $j=1$ and $1\le i \le b(s-2,t,k)_\alpha - (e-1)(\delta -e)$. In this degeneration of some of the lines of $T$ to some of the lines of $T_1$ ($L_j$ degenerates to $M_j$) $S$ moves to a subset
 $S_1\subseteq \mathrm{Sing}(T_1)$. By $A(s,t,k)_\alpha$ we have $h^i(\Ii _{C_{t,k}\cup Y\cup S_1}(s)) =0$, $i=0,1$. By the semicontinuity theorem we have $h^i(\Ii _{C_{t,k}\cup Y\cup S_1}(s)) =0$, $i=0,1$. Set $Y':= Y\cup \chi \cup T$. We have $\deg (T) =\delta$, $\sharp (\mathrm{Sing}(T))
= e(\delta-e)$ and $\sharp (Y\cap T) =b(s,t,k)_\alpha-(e-1)(\delta -e) +\delta-\alpha+e =b(s,t,k)_\alpha -(e-2)\delta +e^2 -\alpha$. Thus $1-\chi (\Oo _{Y'}) = p_a(Y)-b(s,t,k)_\alpha +e(\delta-e)
+b(s,t,k)_\alpha-(e-2)\delta +e^2-\alpha-\delta = g(s,t,k)_\alpha + \delta -\alpha = g(s+2,t,k)_\alpha$.

 \quad ({c}) Assume $(e-1)(\delta -e-1) < b(s-2,t,k)_\alpha \le (e-1)(\delta -e-1) +e-2$ and hence $e\ge 3$. We have $b(s-2,t,k)_\alpha < (e-1)(\delta -e)$. 
Since $e\le \alpha -1$, Lemma \ref{oov2} gives $\delta \ge e$ and $\delta \ge 2\alpha -2 \ge 2e-4$ and hence $(e-1)(\delta -e-1) +1 \ge (e-2)(\delta -e) $. Thus $b(s-2,t,k)_\alpha \ge (e-2)(\delta -e)$. 
 Take $L_i=M_i$ if $1\le i \le \delta -e$ and $L_i$ a small deformation of $M_i$ not intersecting $Y\cap Q$. Let $S$ be the union of all points $R_j\cap L_i$ with either $j>1$ or $j=1$ and $1\le b(s,t,k)_\alpha - (e-2)(\delta -e)$. In this degeneration of some of the lines of $T$ to some of the lines of $T_1$ the set $S$ degenerates to a subset
 $S_1\subseteq \mathrm{Sing}(T_1)$. By $A(s-2,t.k)_\alpha$ we have $h^i(\Ii _{C_{t,k}\cup Y\cup S_1}(s-2)) =0$, $i=0,1$. By the semicontinuity theorem we have $h^i(\Ii _{C_{t,k}\cup Y\cup S_1}(s-2)) =0$, $i=0,1$. Set $Y':= Y\cup T\cup R_0\cup \chi$. We have $R_0\cup T\in |\Oo _Q(e,\delta-e)|$, $\deg (R_0\cup T) = \delta$, 
 $\sharp (\mathrm{Sing}(R_0\cup T))
= e(\delta -e)$ and $\sharp (Y\cap (R_0\cup T)) =1+\sharp (Y\cap T) =1+e-1 + b(s,t,k)_\alpha -(e-2)(\delta -e) -\alpha+e =b(s,t,k)_\alpha -(e-2)\delta +e^2 -\alpha$. Thus $1-\chi (\Oo _{Y'}) = p_a(Y)-b(s,t,k) +e(\delta-e)
+b(s,t,k)_\alpha-(e-2)\delta  -\alpha+e^2- \delta = g(s,t,k)_\alpha + \delta -\alpha = g(s+2,t,k)_\alpha$
\end{remark}

   From now on we take $\alpha = 202$ and write $a(s,t,k)$, $b(s,t,k)$ and $A(s,t,k)$ instead
 of $a(s,t,k)_{202}$, $b(s,t,k)_{202}$ and $A(s,t,k)_{202}$.

 \begin{lemma}\label{ov7}
 Assume $k \le t \le 200k$, $t+k > 102\cdot 229$ and $k^2\ge 207\cdot (k+2)$. 
 
 \quad (i) $A(t+k+1,t,k)$ is true.
 
 \quad (ii) Fix an integer $s\ge t+k+1$ such that $s\equiv t+k+1 \pmod{2}$ and
assume that $A(s,t,k)$ is true. Then $A(s+2,t,k)$ is true.
 \end{lemma}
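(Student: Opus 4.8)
The plan is to prove (i) and (ii) as, respectively, the base case and the inductive step of an induction on $s$, following the Horace scheme sketched after the statement of Assertion $A(s,t,k)_\alpha$ and performing the genus and grid bookkeeping exactly as in Remark~\ref{exp1}; throughout $\alpha=202$. The three hypotheses are precisely what the numerical lemmas of Section~\ref{SnI} consume: $t+k>102\cdot 229=102(\alpha+27)$ feeds Lemma~\ref{oov6}, so that $g(s,t,k)\ge 26$ for every $s\ge t+k+3$ (while $g(t+k+1,t,k)=0$), which via Remark~\ref{ov8} guarantees the existence of the auxiliary curves $Y$ with $h^1(N_Y(-2))=0$ and prescribable general trace on a quadric; $k^2\ge 207(k+2)=(\alpha+5)(k+2)$ feeds Lemma~\ref{==o1}, hence $2tk\ge(\alpha+5)(t+k+4)$, the hypothesis of Lemma~\ref{oov5}; and $k\le t\le 200k$ is the standing assumption of Lemmas~\ref{oov2}, \ref{oov2.1}, \ref{oov2.1=}, \ref{oov4=}. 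Under these hypotheses one gets, for all $s$ in range, $\delta:=a(s+2,t,k)-a(s,t,k)>-1+s/102$, the bound $\delta\le 2a(s,t,k)-\alpha$ (so a degree-$\delta$ grid can be attached to $Y\cap Q$), and $e\le 201<\alpha$, where $e$ is the integer occurring in the definition of $A(s,t,k)$.

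For (i), put $a:=a(t+k+1,t,k)$ and $\delta:=a(t+k+3,t,k)-a(t+k+1,t,k)$. By (\ref{eq==o1}) and $t+k>102\cdot 229$ we have $a\ge(t+k)/102>229$, so a general smooth rational space curve $Y$ of degree $a$ has balanced normal bundle; hence we may take $Y$ disjoint from $C_{t,k}$, transverse to $Q$, with $(C_{t,k}\cup Y)\cap Q$ general (Remark~\ref{ov8}, \cite[Theorem 5.12]{pe}). Since $\delta\le 1+2a$ (Lemma~\ref{oov2.1}) there is room to attach to $Y\cap Q$ a grid $T$ of bidegree $(e,\delta-e)$ with $e\le 201$ (Lemmas~\ref{oov2.1}, \ref{oov2.1=}), and then to take for $S$ the subset of $\mathrm{Sing}(T)$ prescribed by the definition of $A(t+k+1,t,k)$. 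By (\ref{eqo4}) one has $(t+k+1)a+b(t+k+1,t,k)=I_{t,k}(t+k+1)-1$, so the \emph{expected} value of $h^0(\mathcal{I}_{C_{t,k}\cup Y\cup S}(t+k+1))$ is $0$; that $C_{t,k}\cup Y\cup S$ actually imposes independent conditions, i.e. $h^i(\mathcal{I}_{C_{t,k}\cup Y\cup S}(t+k+1))=0$ for $i=0,1$, follows, as in the base cases of \cite{bef, bbem}, from a residuation with respect to $Q$ together with the arithmetic Cohen--Macaulayness of $C_{t,k}$ (Lemma~\ref{b1+++++}, which gives $h^i(\mathcal{I}_{C_{t,k}}(t+k-1))=0$) and the maximal rank of a general rational curve. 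This yields $A(t+k+1,t,k)$.

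For (ii), assume $A(s,t,k)$ and fix a triple $(Y,T_1,S)$ realizing it. Remark~\ref{exp1} (whose case distinctions are available because $e\le 201<\alpha$ and $\delta\le 2a(s,t,k)-\alpha$, by Lemmas~\ref{oov4=}, \ref{oov2.1=}, \ref{oov5}) produces on $Q$ a grid $T$ of bidegree $(e',\delta-e')$ with $e'\in\{e,e+1\}$ such that $\deg(Y\cup T)=a(s+2,t,k)$, a subset $S'\subseteq\mathrm{Sing}(T)$ whose specialization lies in $\mathrm{Sing}(T_1)$, and a nilpotent scheme $\chi$ on the remaining singular points of $T$ with $\chi_{\mathrm{red}}$ equal to the singular-point set used by $A(s,t,k)$, arranged so that $Y\cup T\cup\chi$ is connected of arithmetic genus $g(s+2,t,k)$ (all the degree and genus identities being the ones checked in Remark~\ref{exp1}). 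Set $Z:=C_{t,k}\cup Y\cup T\cup\chi\cup S'$ and let $F$ be a form of degree $s+2$ vanishing on $Z$. Its restriction $F|_Q$ vanishes on the $\delta$ lines of $T$, hence factors as their product times a form $G_0\in H^0(\mathcal{O}_Q(s+2-e',s+2-\delta+e'))$; because $(C_{t,k}\cup Y)\cap Q$ is general, the points of it lying off $T$, together with $S'$, are more numerous than $h^0(\mathcal{O}_Q(s+2-e',s+2-\delta+e'))$ — this is the inequality supplied by Lemmas~\ref{oov1}, \ref{oov2}, \ref{oov4=} once $e\le 201$ — and in general position on $Q$, so $G_0=0$ and $F|_Q=0$. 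Thus $F=QG$ with $\deg G=s$ and $G$ vanishing on $\mathrm{Res}_Q(Z)=C_{t,k}\cup Y\cup\chi_{\mathrm{red}}$; by $A(s,t,k)$, $G=0$, hence $F=0$ and $h^0(\mathcal{I}_Z(s+2))=0$. Since $h^i(\mathcal{I}_Z(s+2))=0$ for $i\ge 2$ (the curve part of $Z$ is non-special in degree $s+2$ and the finitely many points do not affect $h^1$) and $\chi(\mathcal{I}_Z(s+2))=0$ by the defining relation (\ref{eqo5}) and the computations of Remark~\ref{exp1}, also $h^1(\mathcal{I}_Z(s+2))=0$. Finally, $Y\cup T\cup\chi$ is smoothable to a smooth connected curve $Y'$ of degree $a(s+2,t,k)$ and genus $g(s+2,t,k)$, keeping $Y'\cap C_{t,k}=\emptyset$, $Y'$ transverse to $Q$ and $(C_{t,k}\cup Y')\cap Q$ general — the local structure at each node of $T$ (a planar double point sticking out of $Q$ together with two transverse branches, as in Section~\ref{Spr}) smooths by \cite{hh, s}, cf. Remark~\ref{oo1} — and by semicontinuity $h^i(\mathcal{I}_{C_{t,k}\cup Y'\cup S''}(s+2))=0$ for the singular-point set $S''$ prescribed by $A(s+2,t,k)$. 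This is $A(s+2,t,k)$.

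The main obstacle is the step ``$F|_Q=0$'' in (ii): one must simultaneously keep $e$ below $\alpha=202$ and keep the bidegree $(s+2-e',\,s+2-\delta+e')$ of the residual line bundle on $Q$ balanced enough that the general off-grid points of $(C_{t,k}\cup Y)\cap Q$, together with $S'$, impose independent conditions on $H^0(\mathcal{O}_Q(s+2-e',s+2-\delta+e'))$; this is precisely what all of Section~\ref{SnI}, and the deliberately generous choice $\alpha=202$, are engineered to deliver, and it is where the numerical hypotheses of the lemma are spent. A secondary, bookkeeping-type difficulty is matching $\chi_{\mathrm{red}}$ to the singular set used by $A(s,t,k)$ across the change of grid bidegree between level $s$ and level $s+2$, which is what forces the several cases of Remark~\ref{exp1}; and one must check that the smoothing of $Y\cup T\cup\chi$ can be carried out while staying disjoint from $C_{t,k}$ and transverse to $Q$ with general trace.
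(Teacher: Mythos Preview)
Your outline has the right architecture, but the inductive step (ii) has a genuine gap at exactly the point you flag as ``the main obstacle''.

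You set $Z=C_{t,k}\cup Y\cup T\cup\chi\cup S'$ and argue that $F|_Q=0$ because the off-$T$ points of $(C_{t,k}\cup Y)\cap Q$, together with $S'$, are ``more numerous than $h^0(\mathcal O_Q(s+2-e',s+2-\delta+e'))$ \dots\ and in general position''. Two things are wrong here. First, the count is an \emph{equality}, not an inequality: the paper's Claim~1 shows $2d_{t,k}+\sharp\Psi+b(s+2,t,k)=(s+3-e')(s+3-\delta+e')$, and you need both $h^0$ and $h^1$ to vanish, so overshooting would kill only $h^0$. Second, and more seriously, $S'$ is \emph{not} in general position: it sits in the singular locus of a grid, and whether it imposes independent conditions on $|\mathcal O_Q(s+2-e',s+2-\delta+e')|$ is a genuine question. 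The paper handles this in its step~(c): the grid for level $s+2$ is built in the \emph{exchanged} rulings (bidegree $(\tau-f',f')$ rather than $(f',\tau-f')$), and then the needed vanishing $h^1(Q,\mathcal I_{S',Q}(s+2-e',s+2-\delta+e'))=0$ reduces to inequalities of the form $\tau+e'\le s+2$ and $f'+\delta\le s+2$, supplied by Corollary~\ref{C-boundDelta} (which you never invoke). Lemmas~\ref{oov1}, \ref{oov2}, \ref{oov4=} alone do not give this; they bound $e$ and give a \emph{lower} bound on $\delta$, whereas here an \emph{upper} bound $\delta,\tau<s-M$ is what is needed.

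There is also a bookkeeping confusion that obscures the argument: you introduce $S'\subseteq\mathrm{Sing}(T)$ ``whose specialization lies in $\mathrm{Sing}(T_1)$'' and separately $\chi$ ``on the remaining singular points of $T$ with $\chi_{\mathrm{red}}$ equal to the singular-point set used by $A(s,t,k)$''. But in the paper $\chi=\bigcup_{o\in S}2o$ is supported \emph{on} the set $S$ coming from $A(s,t,k)$, not on its complement, and the set $S'$ you need for $A(s+2,t,k)$ has cardinality $b(s+2,t,k)$ and lives in a \emph{new} grid determined by $\tau:=a(s+4,t,k)-a(s+2,t,k)$ and its own integer $f$; your $S'$ and $S''$ are never tied to $\tau$ and $f$ at all. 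Finally, for (i) the paper does not take a general rational curve and ``residuation'': it starts with $F\in|\mathcal O_Q(a(t+k+1,t,k)-1,1)|$ lying \emph{on} $Q$, so that $\mathrm{Res}_Q(C_{t,k}\cup F)=C_{t,k}$ and Lemma~\ref{b1+++++} applies directly; your sketch of (i) does not actually establish the required vanishing.
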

 
 \begin{proof}
We first prove (ii). We will show in step (d) the small modification needed to get $A(t+k+1, t,k)$.
Set $\delta := a(s+2,t,k)-a(s,t,k)$. By assumption we have $t+k -1 \ge 102\cdot 229$. We take $Y$ of degree $d(s,t,k)$ and genus $g(t,s,k)$ such that for all $\tilde{S}$ in some grid and with cardinality $b(s,t,k)$
we have $h^i(\Ii _{C_{t,k}\cup Y\cup \tilde{S}}(s)) =0$, $i=0,1$. We will only use a very specific $\tilde{S}$ described separately in each case. In steps (a) and (b) we will construct
the curve $Y'$ appearing (as $Y$) in the statement of $A(s+2,t,k)$. In step ({c}) we will construct the grid $T_1$ such that for all $S'\subseteq \mathrm{Sing}(T_1)$ with $\sharp (S') =b(s+2,t,k)$  we have $h^i(\Ii _{C_{t,k}\cup Y'\cup S'}(s+2)) =0$, $i=0,1$.

 \quad (a) Assume $b(s,t,k) >0$.Take $e'\in \{e,e+1\}$ as in Remark \ref{exp1} so that $T$ is union of $e'$ lines $R_j\in |\Oo _Q(1,0)|$ and $\delta -e'$ lines $L_j\in |\Oo _Q(0,1)|$. Take as $\tilde{S}$ the set $S\subseteq \mathrm{Sing}(T)$, the one considered in Remark \ref{exp1}.
 Set $\chi := \cup _{o\in S} 2o$;  to make the construction of Remark \ref{exp1} we need that $2a(s,t,k) =\sharp (Y\cap Q)$ is at least the number of lines $R_j$ and $L_i$ containing a point of $Y\cap Q$; since
 the latter number is at most $\delta$, it is sufficient to quote Lemma \ref{oov5}.  
In all cases we have $Y':= Y\cup T\cup \chi$ with $T\in |\Oo _Q(e',\delta -e')|$ and $T\cap C_{t,k}=\emptyset$. Set $\Psi := Y\cap (Q\setminus T_1)$.

\quad {\emph {Claim 1:}} We have $2d_{t,k} + \sharp (\Psi )+ b(s+2,t,k) = (s+3-e')(s+3-\delta +e')$.

\quad {\emph {Proof of Claim 1:}} First assume that we are in case (a) of Remark \ref{exp1}. In this case we have $e'=e+1$ and $\sharp (\Psi ) = 2a(s,t,k) - b(s,t,k) +(e-1)\delta -e^2-2e +201$.
By (\ref{eqo4.1}) we have $(s+2-e)(s+4-\delta +e) =(s+3)^2 -1 -(s+2-e)\delta  -2e-e^2 = 2d_{t,k} +2a(s,t,k) + (s+1)\delta +202+ b(s+2,t,k)-b(s,t,k) -1
-(s+2-e)\delta -2e-e^2 =2d_{t,k} +\sharp (\Psi )+b(s+2,t,k)$.

Now assume that we are in case (b) of Remark \ref{exp1}. We have $e'=e$ and $\sharp (\Psi ) =2a(s,t,k) -b(s,t,k) +(e-2)\delta -e^2 +202$. By (\ref{eqo4.1}) we
have $(s+3-e)(s+3-\delta +e)
= (s+3)^2-(s+3-e)\delta -e^2 = 2d_{t,k} +2a(s,t,k) +(s+1)\delta +202+b(s+2,t,k)-b(s,t,k)-(s+3-e)\delta -e^2 =2d_{t,k} +b(s+2,t,k)+\sharp (\Psi )$.

Now assume that we are in case ({c}) of Remark \ref{exp1}. We have $a'=e$ and $\sharp (\Psi ) = 2a(s,t,k)-b(s,t,k) +(e-2)\delta -e^2 +202$. By (\ref{eqo4.1}) we have
$(s+3-e)(s+3-\delta +e) = (s+3)^2 -(s+3-e)\delta -e^2 = 2d_{t,k} +2a(s,t,k) +202 +(s+1)\delta +b(s+2,t,k) -b(s,t,k) -(s+3-e)\delta -e^2 = 2d_{t,k} +b(s+2,t,k)+\sharp (\Psi )$.\qed

By Claim 1 and the generality of $\Psi \cup (C_{t,k}\cap Q)$ we have $h^i(Q,\Ii _{C_{t,k}\cup Y'\cup S',Q}(s+2)) =0$. Since $\Res _Q(C_{t,k}\cup Y'\cup S') = C_{t,k}\cup Y\cup S$,
the residual sequence of $Q$ gives $h^i(\Ii _{C_{t,k}\cup Y'\cup S'}(s+2)) =0$, $i=0,1$.  
In step ({c}) we will prove that we may find $S'$ and a deformation of $Y'$ to get $A(s+2,t,k)$. 

\quad {\emph{Claim 2:}} $Y'$ is a flat limit of a family of connected smooth curves of degree $a(s+2,t,k)$ and genus $g(s+2,t,k)$.

\quad {\emph {Proof of Claim 2:}} Fix any two skew lines $D_1,D_2\subset \PP^3$
and any $p\in \PP^3\setminus (D_1\cup D_2)$. The linear projection from $p$ shows that there is a unique line $L$ with $p\in L$, $L\cap D_1\ne \emptyset$ and $L\cap D_2\ne \emptyset$. If $D_1$, $D_2$ and $p$ depend continuously from certain parameters, then the line $L$ depends continuously from the same parameters. 

We assume that $A(s,t,k)$ is in case (a) of Remark \ref{exp1} (the cases described in (b) and ({c}) of Remark \ref{exp1} are done in the same way). Take as a parameter space an integral affine curve $\Delta$ and fix $o\in \Delta$. Set $Y'(o):= Y'$,
$R_j(o):= R_j$ and $L_i(o):= L_i$. Take an algebraic family $\{R_j(z)\}_{z\in \Delta}$ of lines of $\PP^3$ with $R_j(z)$ transversal to $Q$ if $z\ne o$
and $R_j\cap Y\in R_j(z)$ for all $z$, and an algebraic family $\{L_i(z)\}_{z\in \Delta}$, $202-e\le i \le b(s,t,k) -(e-1)(\delta -e-1)$, of lines of $\PP^3$ with $L_i(z)$ transversal to $Q$ if $z\ne o$,
$L_i\cap Y\in L_i(z)$ for all $z$ and $L_i(z)\cap R_j(z)\ne \emptyset$ if and only if $j=0,1$. Changing if necessary $\Delta$ we may find an algebraic family $\{L_i(z)\}_{z\in \Delta}$, $1\le i \le 202 -1-e$,
of lines with $L_i(z)\cap R_j(z) \ne \emptyset$, $z\in \Delta \setminus \{o\}$, if and only $j=0$, and an algebraic family $\{L_i(z)\}_{z\in \Delta}$, $ b(s,t,k) -(e-1)(\delta -e-1) < i \le \delta -e-1$,
of lines with $L_i(z)\cap R_j(z)\ne \emptyset$ if any only if $j=0,1$. For any $z\in \Delta \setminus \{o\}$ set $Y'(z):= Y\cup \bigcup R_j(z)\cup \bigcup L_i(z)$.
The family  $\{Y'(z)\}_{z\in \Delta}$ is flat. Then we use Remark \ref{oo1} to smooth $Y\cup \bigcup R_j(z)\cup \bigcup L_i(z)$ for some $z\in \Delta \setminus \{o\}$\qed

\quad (b) Assume $b(s,t,k) =0$, i.e. $S=\emptyset$. Instead of lines $R_j$ and $L_j$ we take a line $R_0\in |\Oo _Q(1,0)|$ with $R_0\cap Y\ne \emptyset$
and $L_i\in |\Oo _Q(0,1)|$, $1\le i\le \delta-1$, such that $L_i\cap Y\ne \emptyset$ if and only if $i \ge 202$; we are using that $\delta \ge 202$ (Lemma \ref{oov5}).
We assume $L_i\cap C_{t,k}=\emptyset$ for all $i$. In this case we have $a_1=1$, $b_1= \delta -1$ and $\sharp (\Psi ) = 2a(s,t,k)
-\delta +201$. By (\ref{eqo4.1}) we have $(s+2)(s+4 -\delta ) = (s+3)^2 -1 -(s+2)\delta 
= 2d_{t,k} +2a(s,t,k) +(s+1)\delta+201+b(s+2,t,k) -(s+2)\delta = 2d_{t,k} +\sharp (\Psi ) +b(s+2,t,k)$. The union $Y'$ of $Y$
and all lines
$R_j$ and $L_i$ is smoothable by Remark \ref{oo1}.

\quad {\emph{Claim 3:}} Fix $S''\subset Q$ such that $\sharp (S'') = b(s+2,t,k)$ and $S''\cap Q\cap (T\cup Y\cup C_{t,k})=\emptyset$. If $h^1(Q,\Ii _{S'',Q}(s+2-e',s+2-\delta +e')) =0$,
then $h^i(\Ii _{C_{t,k}\cup Y'\cup S''}(s+2)) =0$, $i=0,1$.

\quad {\emph {Proof of Claim 3:}} We have $\mathrm{Res}_Q(C_{t,k}\cup Y'\cup S'') =C_{t,k}\cup Y' \cup S$ and $Q\cap (C_{t,k}\cup Y'\cup S'')
= (Q\setminus T)\cap (C_{t,k}\cup Y)\cup S''$. By $A(s,t,k)$ we have $h^i(\Ii _{C_{t,k}\cup Y'\cup S}(s)) =0$. Therefore the residual exact sequence of $Q$ shows that it is sufficient to prove
that $h^i(Q,\Ii _{(Q\setminus T)\cap (C_{t,k}\cup Y)\cup S''\cup T}(s+2,s+2)) =0$, $i=0,1$, i.e., to prove that we have $h^i(Q,\Ii _{(Q\setminus T)\cap (C_{t,k}\cup Y)\cup S''\cup T}(s+2-e',s+2-\delta+e')) =0$, $i=0,1$
for some $C_{t,k}$ and $Y$ for a fixed $S''$. We may deform $C_{t,k} \cup Y$ (keeping fixed $S''$) so that $(Q\setminus T)\cap (C_{t,k}\cup Y)$ are general. Thus it is sufficient
to observe that in parts (a) and (b) we proved that $\sharp ((Q\setminus T)\cap (C_{t,k}\cup Y))= (s+3-e')(s+3-\delta +e') -b(s+2,t,k)$.

\quad ({c})  If $b(s+2,t,k) =0$, then $S'=\emptyset$ and hence parts (a) and (b) prove $A(s+2,t,k)$, because we proved that $Y'$ is smoothable
(Claim 2 for the case $b(s,t,k)>0$) and Claim 3 with $S''=\emptyset$ gives $h^i(\Ii _{C_{t,k}\cup Y'}(s+2)) =0$, $i=0,1$. Now assume $b(s+2,t,k)>0$. We prove $A(s+2,t,k)$, but with exchanged the two rulings of $Q$.
Set $\tau := a(s+4,t,k)-a(s+2,t,k)$.  Let $f$ be the maximal positive integer
such that $(f-1)(\tau -f-1) < b(s+2,t,k)$. $A(s+2,t,k)$ is as in one of the cases (a), (b) or ({c}) of Remark \ref{exp1} with
$\tau$ instead of $\delta$ and $f$ instead of $e$. We call $T_1$ the grid of bidegree $(\tau -f',f')$, $f'\in \{f,f+1\}$ called
$T$ in in Remark \ref{exp1}, but with exchanged the $2$ rulings of $Q$, i.e. in all cases we take
$S'\subseteq
\mathrm{Sing}(T_1)$ with
$T_1$ unions of
$\tau -f'$ distinct lines of bidegree
$(1,0)$ and
$f'$ distinct lines of bidegree
$(0,1)$.  $h^1(Q,\Ii _{\mathrm{Sing}(T_1),Q}(s+2-e',s+2-\delta +e')) =0$. If $A(s+2,t,k)_\alpha$ is in case (a) (resp. (b) or ({c}) of Remark \ref{exp1} we take
$f'=f+1$ (resp.
$f' =f$).
We have $h^1(Q,\Ii _{\mathrm{Sing}(T_2),Q}(s+2-f',s+2-\delta +e')) =0$ (and hence $h^1(Q,\Ii
_{S',Q}(s+2-e',s+2-\delta +e')) =0$ for each $S' \subseteq \mathrm{Sing}(T_1)$), because $e'+\delta -e' \le s+2$ and $f'1+\delta \le s+2$, (we use that $e+1 +\tau \le
\tau +\alpha +1
\le s+2$ and $f+1+\delta -1\le s+2$ use Corollary \ref{C-boundDelta}). Claim 3 fives $A(s+2,t,k)$.

\quad (d) Now we prove $A(k+t+1,t,k)$. We have $h^i(\Ii _{C_{t,k}}(t+k-1))=0$. Since $g(t+k+1,t,k) =0$, we need to add a smooth rational curve of degree $a(t+k+1,t,k)$.
We start with a general $F\in |\Oo _Q(a(t+k+1,t,k)-1,1)|$. Thus $F\cap C_{t,k} =\emptyset$ and $F$ is a smooth rational curve. Since $C_{t,k}\cap Q$ is general in $Q$, for any set $S\subset Q\setminus (F\cup (C_{t,k}\cap Q))$ such that $\sharp (S) =b(k+t+1,t,k)$ and $h^1(Q,\Ii _S(t+k,t+k+2-a(t+k+1,t,k)+2)) =0$, the residual exact sequence of $Q$
gives $h^i(\Ii _{C_{t,k}\cup F\cup S}(t+k+1)) =0$, $i=0,1$. If $b(t+k+1,t,k)=0$, it is sufficient to deform $F$ to a general smooth rational curve $Y$ of degree $a(t+k+1,t,k)$
and use that $h^1(N_Y(-2)) =0$, because $a(t+k+1,t,k)\ge 3$. Now assume $b(t+k+1,t,k) >0$. We may take $S$ in a grid $T$ of bidegree $(e,\delta -e)$ as in the statement of $A(t+k+1,t,k)$, because we may deform $F$ to a curve transversal to $Q$ and fixing one point for each irreducible component of the grid $T$. We use that
$\delta \le a(t+k+1,t,k)$ by Lemma \ref{oov2.1}. \end{proof}

\begin{remark}\label{2no1}
The inductive proof of Lemma \ref{ov7} gives the following statement stronger that $A(s,t,k)$ but that we proved to be equivalent to it.
Set $\delta := a(s+2,t,k)_\alpha -a(s,t,k)_\alpha$. Let $e$ be the maximal positive integer such that $b(s,t,k) > (e-1)(\delta-e-1)$ and $e\le \delta /2$. Let $Q$ be a smooth quadric. Fix $C_{t,k}$ intersecting transversally $Q$ and such that $Q\cap C_{t,k}$ is formed by $2d_{t,k}$ general
 points of $Q$. There is a pair $(Y,T)$ with the following properties. $Y$ is a smooth and connected curve of degree $a(s,t,k)_\alpha$ and genus $g(s,t,k)_\alpha$ such
 that $Y\cap C_{t,k}=\emptyset$, $Y$ intersects transversally $Q$ and $(C_{t,k}\cup Y) \cap Q$ is general in $Q$ and  $T$ is a grid of $Q$ adapted to $(Y,C_{t,k}\cap Q)$
 such that for every $S\subseteq \mathrm{Sing}(T)$ we have $h^0(\Ii _{C_{t,k}\cup Y\cup S}(s)) =\max \{0,b(s,t,k)-\sharp (S)\}$ and $h^1(\Ii _{C_{t,k}\cup Y\cup S}(s)) =\max \{0,-b(s,t,k)+\sharp (S)\}$.
\end{remark}

 \section{The genus enters into the playground}\label{Sg}

 Now we fix $m, d$ and take $g:= 1+d(m-1)-\binom{m+2}{3}$. Recall that we take $\alpha =202$.
 
 \begin{remark}\label{v3}
Since $g =1+(m-1)d-\binom{m+2}{3}$ and $d< (m^2+4m+6)/4$, we have $g< (m^3+3m^2+2m-18)/12$. 
\end{remark}

From now on we assume $g \ge g_{1000,1000} = 1+ 1000\cdot 1001\cdot 1995/3$.
Let $t$ be the maximal integer such that $g_t\le \frac{999999}{1000000}g$. The maximality of $t$ gives
$g_{t+1} > \frac{999999}{1000000}g$ and so $\frac{999999}{1000000}g  -(t+1)(3t+1)/3  < g_t \le \frac{999999}{1000000}g$. Since $g\ge g_{1000,1000}$, we have $t\ge 1000$.
Let $k$ be the maximal positive integer such that $g_{t,k}\le g$ and $t+k \equiv m \pmod{2}$; $k$ exists, because $g_2 =2\le g/1000000 \le g-g_t$. Since $2g_t\ge 2\frac{999999}{1000000}g -2(t+1)(3t+1)/3 \ge g$, we have
$k\le t$. 
The minimality of $k$ gives $g_{t,k+2} > g \ge g_{t,k}$, i.e.
\begin{equation}\label{equ1}
g_{t,k} \le g \le g_{t,k} +2k^2+2k
\end{equation}
We use $A(s,t,k)$ for these integers $t,k$. A key tool that we need to cover all interval for the genus and the degrees in Theorems
\ref{i1}, \ref{i2} and \ref{i3} (and not just prove for many $(d,g)$ the existence
curves
$C$ with
$h^0(\Ii _C(m-1))=0$ and (degree, genus) = (d,g)), is that in (\ref{equ1}) the interval depends only on $k$ and it is quadratic
on $k$, while we take $t\gg k$, say $t\ge 30k$ (Lemma \ref{u3}).

Let
$y$ be the maximal integer
$s\ge t+k+1$, $s\equiv t+k+1\pmod{2}$ such that
$g(s,t,k) +g_{t,k}\le g$;
$y$ exists because $g(t+k+1,t,k) =0$, $g_{t,k}\le g$ and $g(s,t,k)>g(s-2,t,k)$ for all $s\ge t+k+3$ by Lemma \ref{oov2} and the
inequality $t+k+3\ge 202\times 102 +2$.  Note that $y\equiv m-1 \pmod{2}$. For all integers $x, y$ such that $x\ge y+2$ and $x\equiv y\pmod{2}$ we define the integers $u(x,t,k)$
and $v(x,t,k)$ by the relations

\begin{equation}\label{eqov3}
x(d_{t,k} +u(x,t,k))+3-g +v(x,t,k) =\binom{x+3}{3}, \ 0\le v(x,t,k) \le x-1
\end{equation}

The integers $y$, $u(x,t,k)$ and $v(x,t,k)$
 depend on $g$, but we do not put $g$ in their symbols.
 
 \begin{remark}\label{aaa+1+1}
 The main actor of this section is a a general smooth curve $Y$ of genus $g-g_{t,k}$ and of some degree $z\ge g-g_{t,k}+3$ with $z =u(x,t,k)$ for
 some $x\ge y+2$ with $x \equiv y \pmod{2}$. We need to check that $h^i(N_Y(-2)) =0$, $i=0,1$.
 If $g-g_{t,k} \ge 26$, then this is true by \cite[page 67, inequality $DP (g) \le g + 3$]{pe}. If $g-g_{t,k} = 0$, then we just quote \cite[Proposition 6]{ev}. Now assume
 $1\le g-g_{t,k} \le 25$. In our case we have $z=u(x,t,k) \ge u(y+2,t,k) \ge a(y,t,k) + 202\ge 202$ (Lemmas \ref{oov5} and \ref{oov6}). See \cite[page 106-107]{pe} for the best published results (we only need them for $z\ge 202$). Fix a general $A\subset Q$ such that $\sharp (A) =2a$. Since $h^1(N_C(-2)) =0$ for a general smooth curve
 of genus $g-g_{t,k}$ and degree $z$, we may find a smooth curve $Y$ of genus $g-g_{t,k}$ and degree $z$ intersecting transversally $Q$ and with $A =Y\cap Q$.
 \end{remark}
 
 For any integer $x\ge y+2$ with $x\equiv y \pmod{2}$ define the following Assertion $B(x,t,k)$.
 
 \quad {\bf {Assertion}} $B(x,t,k)$: Let $Q$ be a smooth quadric. Fix $C_{t,k}$ intersecting transversally $Q$ and such that $Q\cap C_{t,k}$ is formed by $2d_{t,k}$ general
 points of $Q$. Set $\delta:= u(x+2)-u(x,t,k)$. Let $e$ be the maximal positive integer such that $v(x,t,k) > (e-1)(\delta -e)$ and $e <\delta/2$.
We call $B(x,t,k)$ the existence of a pair $(Y,T)$ with the following properties:
 \begin{enumerate}
 \item $Y$ is a smooth and connected curve of degree $u(x,t,k)$ and genus $g-g_{t,k}$ such
 that $Y\cap C_{t,k}=\emptyset$, $Y$ intersects transversally $Q$ and $(C_{t,k}\cup Y) \cap Q$ is general in $Q$;
 \item  $T$ is a grid $T\in |\Oo _Q(e,\delta -e)|$;
 \item  for each $S\subseteq \mathrm{Sing}(T)$
 with $\sharp (S) =v(x,t,k)$ we have $h^i(\Ii _{C_{t,k}\cup Y\cup S}(x)) =0$, $i=0,1$.
 \end{enumerate}
 
 \begin{remark}
 We use Remark \ref{aaa+1+1} for the existence of $Y$.  Lemmas \ref{ov6+} shows that $e$ exists and that $e\le 201$.
 \end{remark}

\begin{remark} As in Remark \ref{2no1} proving inductively $B(x,t,k)$ we will also prove that for all $S\subseteq \mathrm{Sing}(T)$ we have $h^0(\Ii _{C_{t,k}\cup Y\cup S}(v)) =\max \{0,v(x,t,k)-\sharp (S)\}$ and $h^1(\Ii _{C_{t,k}\cup Y\cup S}(x)) =\max \{0,-v(x,t,k)+\sharp (S)\}$.
\end{remark}

 \section{Numerical lemmas, II}\label{SnII}
 In this section we collect the numerical lemmas related to section \ref{Sg} and used in the next sections. We take $m$, $d$
and $g$ as in section \ref{Sg} and in particular we are forced to assume $t\ge 1000$. 
 
 The next lemma only use that $g_t < g$.

\begin{lemma}\label{ov10}
If $t \ge 250$, then $m> 1.58t$.
\end{lemma}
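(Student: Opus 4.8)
\emph{Proof proposal.} The plan is to turn the two pieces of information we have about $g$ into polynomial inequalities in $t$ and $m$ and then argue by contradiction. On one side, Remark \ref{v3} gives the upper bound $g < (m^3+3m^2+2m-18)/12$. On the other side, by the maximality of $t$ we have $g_t \le \tfrac{999999}{1000000}g < g$, so $g_t < g$, and $g_t = 1 + t(t+1)(2t-5)/6$ provides a cubic lower estimate in $t$. Putting these together will force $m$ to be comparable to $t$, and the claim $m > 1.58\,t$ will follow once $t$ is large enough.

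Concretely, first I would write $6(g_t - 1) = t(t+1)(2t-5) = 2t^3 - 3t^2 - 5t$. Combining $g_t < g$ with Remark \ref{v3} yields
\[
2t^3 - 3t^2 - 5t = 6(g_t - 1) < 6(g-1) < \tfrac{1}{2}\,(m^3 + 3m^2 + 2m - 18),
\]
i.e. $4t^3 - 6t^2 - 10t < m^3 + 3m^2 + 2m - 18$. Now suppose, for contradiction, that $m \le 1.58\,t$. Substituting and using $1.58^3 = 3.944312$, $3\cdot 1.58^2 = 7.4892$, $2\cdot 1.58 = 3.16$, the inequality becomes
\[
(4 - 1.58^3)\,t^3 < (6 + 3\cdot 1.58^2)\,t^2 + (10 + 2\cdot 1.58)\,t,
\]
that is $0.055\,t^3 < 0.055688\,t^3 < 13.49\,t^2 + 13.16\,t$. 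For $t \ge 250$ the left-hand side is at least $0.055\cdot 250\cdot t^2 = 13.75\,t^2$, while $13.75\,t^2 - 13.49\,t^2 = 0.26\,t^2 \ge 0.26\cdot 250\,t = 65\,t > 13.16\,t$, so $0.055\,t^3 > 13.49\,t^2 + 13.16\,t$, a contradiction. Hence $m > 1.58\,t$.

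I do not expect any real obstacle here: the argument is a direct combination of Remark \ref{v3} with the formula for $g_t$ and the observation $g_t < g$. The only point requiring a little care is checking that the numerical slack in the leading coefficient, $4 - 1.58^3 \approx 0.0557$, dominates the quadratic and linear correction terms already at the threshold $t \ge 250$; the estimate above confirms this, and the remaining manipulations are routine.
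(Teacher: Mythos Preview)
Your proposal is correct and follows essentially the same route as the paper's proof: combine the upper bound on $g$ from Remark \ref{v3} with the lower bound $g>g_t$, assume $m\le 1.58t$, and obtain a cubic inequality in $t$ whose leading coefficient $4-1.58^3\approx 0.0557$ beats the lower-order terms once $t\ge 250$. The paper's computation is slightly more terse (and contains a minor arithmetical typo, $3\cdot 1.58^2=7.46892$ instead of $7.4892$), but the argument and the numerical check are the same as yours.
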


\begin{proof}
We have $\sqrt[3]{4} \ge 1.587401$, $4 -1.58^3 = 0.055688$ and $3\cdot 1.58^2 =7.46892$. Remark \ref{v3} gives $g< 1 +(m^3+3m^2+2m-30)/12$. Assume $m \le 1.58t$. Since $g> g_t$, we get $1+(1.58^3t^3 +7.46892t^2 +3.16t-30)/12 > 1 + t(t+1)(2t-5)/6$, i.e. $13.46892t^2+13.16t  -30> 0.055688t^3$, which is false if $t \ge
250$.
\end{proof}

\begin{lemma}\label{u3}
If $t\ge 1000^5$, then $k\le t/30$.
\end{lemma}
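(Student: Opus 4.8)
The plan is to convert the two maximality conditions defining $t$ and $k$ into a cubic upper bound for $g_k$ of order $t^3/(3\cdot 10^6)$, and then to contradict it with the elementary cubic lower bound for $g_k$ in terms of $k$.

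First I would record the inequalities forced by maximality and by the monotonicity of $s\mapsto g_s$: from the definition of $t$ we get $g_t\le \frac{999999}{1000000}g<g_{t+1}$, and from the definition of $k$ we get $g_t+g_k=g_{t,k}\le g$. The latter gives $g_k\le g-g_t$, while the former gives $g<\frac{1000000}{999999}g_{t+1}$, hence
\[
g_k\ \le\ g-g_t\ <\ \frac{1000000}{999999}\,g_{t+1}-g_t\ =\ (g_{t+1}-g_t)+\frac{g_{t+1}}{999999}.
\]
Next, using the closed form $g_s=1+s(s+1)(2s-5)/6=\tfrac16(2s^3-3s^2-5s+6)$, a one-line computation gives $g_{t+1}-g_t=t^2-1$ and $g_{t+1}<\tfrac13(t+1)^3<\tfrac13(t^3+4t^2)$ for $t\ge 4$. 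Substituting these, $g_k<(t^2-1)+\frac{t^3}{3\cdot 999999}+\frac{4t^2}{3\cdot 999999}<2t^2+\frac{t^3}{2999997}$. Since $t\ge 1000^5$, the term $2t^2=2t^3/1000^5$ is utterly negligible next to $t^3/2999997$, so with room to spare one gets $g_k<\frac{t^3}{2999990}$.

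Then I would argue by contradiction: assume $k>t/30$. Because $t\ge 1000^5$, this makes $k>3\cdot 10^{13}$, so the elementary inequality $g_k=1+k(k+1)(2k-5)/6>k^3/6$ (valid for all $k\ge 4$, since it amounts to $k^3-3k^2-5k+6>0$) applies and gives $g_k>(t/30)^3/6=t^3/162000$. Combined with the bound of the previous paragraph this forces $t^3/162000<t^3/2999990$, i.e. $2999990<162000$, which is absurd. Hence $k\le t/30$.

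The only point requiring genuine care is the bookkeeping of constants: the ratio $999999/1000000$ in the definition of $t$ is exactly what fixes the scale $t^3/(3\cdot 10^6)$ of the bound on $g_k$, and one must confirm that the quadratic error terms — coming from $g_{t+1}-g_t=t^2-1$ and from the lower-order parts of $g_{t+1}$ and of $g_k$ — are swamped once $t\ge 1000^5$. This is where the large hypothesis on $t$ is used; the argument actually leaves a comfortable factor of margin (it really yields $k\lesssim t/100$), so the stated bound $k\le t/30$ is far from sharp.
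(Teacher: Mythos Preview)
Your proof is correct and follows essentially the same strategy as the paper's: bound $g_k$ from above via $g_k\le g-g_t<\frac{1000000}{999999}g_{t+1}-g_t$ using the maximality of $t$, bound it from below via $g_k>k^3/6>(t/30)^3/6$ assuming $k>t/30$, and reach a contradiction between the two cubic-in-$t$ bounds. The only differences are cosmetic bookkeeping of the lower-order terms (the paper routes through the earlier inequality $g-g_t<g/1000000+(t+1)(3t+1)/3$ and writes out the resulting polynomial inequality explicitly, claiming it fails already for $t\ge 100$, whereas you compute $g_{t+1}-g_t=t^2-1$ directly and absorb the quadratic pieces using the full strength of $t\ge 1000^5$).
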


\begin{proof}
Since $g_{t+1} > \frac{999999}{1000000}g$, we have $g_k \le g-g_t <g/1000000 + (t+1)(3t+1)/3 < g_{t+1}/999999 +(t+1)(3t+1)/3$. Assume
$k>t/30$. We have $30^3 =27000$. Even when $t/30\notin \NN$ we get $999999[6\cdot 27000 + t(t+30)(2t-150)] < 27000[6+(t+1)(t+2)(2t-3) + 2(t+1)(3t+1)]$,
which is false if $t\ge 100$.
\end{proof}

\begin{lemma}\label{u3.0}
If $t\ge 4000$, then $k> t/200$.
\end{lemma}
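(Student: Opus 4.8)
The plan is to argue by contradiction. Suppose $t\ge 4000$ but $k\le t/200$; I will contradict the inequalities that the definitions of $t$ and $k$ force on $g_t$ and $g_{k+2}$.

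First I would record two elementary consequences of those definitions. By the choice of $t$ we have $g_t\le \frac{999999}{10^6}g$, hence $g-g_t\ge \frac1{10^6}g$; since moreover $g_t\le \frac{999999}{10^6}g<g$, this gives $g-g_t>\frac1{10^6}g_t$. By the maximality of $k$ (with the parity constraint $t+k\equiv m\pmod 2$) we have $g_{t,k+2}>g$, i.e.\ $g_t+g_{k+2}>g$, so $g_{k+2}>g-g_t$. Combining the two, $10^6\,g_{k+2}>g_t$.

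Next I would plug the assumption $k\le t/200$ into $g_{k+2}=1+\frac{(k+2)(k+3)(2k-1)}{6}$. Using $t\ge 4000$ only to absorb the additive constants — so that $k+2\le t/200+2\le t/100$, $k+3\le t/200+3\le t/100$ and $2k-1<2k\le t/100$ — one gets $(k+2)(k+3)(2k-1)<(t/100)^3=t^3/10^6$, hence $g_{k+2}<1+\frac{t^3}{6\cdot 10^6}$ and $10^6\,g_{k+2}<10^6+\frac{t^3}{6}$. On the other hand, for $t\ge 4000$ we have $2t-5>1.9\,t$, so $g_t=1+\frac{t(t+1)(2t-5)}{6}>\frac{1.9\,t^3}{6}$. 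Thus $10^6\,g_{k+2}>g_t$ would force $10^6+\frac{t^3}{6}>\frac{1.9\,t^3}{6}$, i.e.\ $6\cdot 10^6>0.9\,t^3$, which is false for $t\ge 4000$ (indeed already for $t\ge 189$). This contradiction gives $k>t/200$.

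There is no real obstacle here: the argument is routine bookkeeping with the cubics $g_t$ and $g_{k+2}$. The only point needing a little care is the first step, where one must convert the relative slack $\frac1{10^6}$ coming from the definition of $t$ into the genuine lower bound $g-g_t>g_t/10^6$, and then observe that the leading coefficient $1.9/6$ of $g_t$ beats the leading coefficient $1/6$ of $10^6\,g_{k+2}$ by a comfortable factor (close to $2$), the hypothesis $t\ge 4000$ serving only to dominate the lower-order terms on both sides.
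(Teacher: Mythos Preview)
Your proof is correct and follows essentially the same route as the paper: contradiction via the inequality $g-g_t>g_t/10^6$ together with an upper bound on the relevant $g$-value coming from $k\le t/200$. The only difference is cosmetic --- you bound $g_{k+2}$ directly from the maximality of $k$ (namely $g_{k+2}>g-g_t$), whereas the paper bounds $g_k$ using the interval (\ref{equ1}), which introduces the extra quadratic term $-2k^2-2k$; your choice makes the arithmetic a little cleaner but changes nothing substantive.
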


\begin{proof}
By (\ref{equ1}) we have $g_k \ge g/1000000 -2k^2-2k \ge g_t/999999 -2k^2-2k$. If $k \le t/200$ we get (even it $t/200 \notin \NN$, because $t/200 \ge 6$)
$1 + t(t+201)(2t-1000)/48000000 \ge 1/999999 + t(t+1)(2t-5)/5999994 -2t^2/40000 -2t/200$, a contradiction.
If $t \le 100k$ we get $999999(1 +k(k+1)(2k-5)/6) +1999998k^2+1999998k) \ge 1 + 100k(100k+1)(200k-5)/6$ which is false for
$k\ge 4000$. 
\end{proof}

From now on in this section we take $\alpha =202$.

\begin{lemma}\label{ov6+}
Assume $k\le t\le 200k$, $t+k\ge 102 \cdot 229$ and $y\ge 111\cdot 210$. Fix  an integer
$x\ge y+2$ such that $x\equiv y\pmod{2}$. Then $u(x+2,t,k)-u(x,t,k) \ge -2+\frac{x(x+1)}{102(x+2)} \ge 202$.
\end{lemma}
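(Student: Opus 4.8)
The plan is to express $u(x+2,t,k)-u(x,t,k)$ through the integers $a(\cdot,t,k)$, $b(\cdot,t,k)$, $g(\cdot,t,k)$ attached to $C_{t,k}$, for which we already have sharp control (Lemma \ref{oov2}), rather than manipulating (\ref{eqov3}) in isolation; a direct subtraction of (\ref{eqov3}) at $x$ and at $x+2$ would require bounding $u(x,t,k)$, hence $g$, in terms of $x$, and that is far too lossy once $x$ is large. Subtracting (\ref{eqo5}) at $s=x$ from (\ref{eqov3}) at $x$ (both relations are available since $x\ge y+2\ge t+k+3$ and $x\equiv y\equiv t+k+1\pmod 2$) gives
$$x\bigl(u(x,t,k)-a(x,t,k)\bigr)=(g-g_{t,k})-g(x,t,k)+b(x,t,k)-v(x,t,k),$$
and likewise at $x+2$. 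Put $\delta:=u(x+2,t,k)-u(x,t,k)$, $\delta_a:=a(x+2,t,k)-a(x,t,k)$, $W:=(g-g_{t,k})-g(x,t,k)$; using $g(x+2,t,k)=g(x,t,k)+\delta_a-202$ (the recursion for $g(\cdot,t,k)$ with $\alpha=202$) one obtains
$$\delta=\delta_a+W\Bigl(\tfrac1{x+2}-\tfrac1x\Bigr)+\frac{202-\delta_a+b(x+2,t,k)-v(x+2,t,k)}{x+2}-\frac{b(x,t,k)-v(x,t,k)}{x}.$$

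The first key point is $W<0$. Indeed $g(\cdot,t,k)$ is non-decreasing in the range we use, because $a(s,t,k)-a(s-2,t,k)>202$ for every relevant $s$ by Lemma \ref{oov2} with $\alpha=202$ (using $s\ge y\ge 111\cdot 210$); hence $g(x,t,k)\ge g(y+2,t,k)$, while $g(y+2,t,k)+g_{t,k}>g$ by maximality of $y$, so $W\le (g-g_{t,k})-g(y+2,t,k)<0$. Since also $\tfrac1{x+2}-\tfrac1x<0$, the term $W(\tfrac1{x+2}-\tfrac1x)$ is $\ge 0$ and may be discarded. Next, inserting $0\le v(x,t,k)$, $v(x+2,t,k)\le x+1$, $0\le b(x+2,t,k)$ and $b(x,t,k)\le x-2$ (the last from $x\ge t+k+3$) turns the identity into
$$\delta\ \ge\ \delta_a+\frac{201-x-\delta_a}{x+2}-1+\frac2x\ =\ \frac{(x+1)\delta_a+201-x}{x+2}-1+\frac2x.$$

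Finally, feeding in $\delta_a>\dfrac{x}{102}-\dfrac{202}{x+1}$ from Lemma \ref{oov2} (legitimate since $k\le t\le 200k$ gives $t\ge k\ge t/200$ and $x$ has the right parity and is $\ge t+k+1$), so that $(x+1)\delta_a>\dfrac{x(x+1)}{102}-202$, a short simplification using $\dfrac{x(x+1)}{x+2}=x-1+\dfrac2{x+2}$ yields
$$\delta\ >\ \frac{x(x+1)}{102(x+2)}-2+\frac1{x+2}+\frac2x\ >\ -2+\frac{x(x+1)}{102(x+2)},$$
which is the first asserted inequality. For the second, $-2+\dfrac{x(x+1)}{102(x+2)}\ge 202$ amounts to $x(x+1)\ge 20808(x+2)$, i.e. $x^2-20807x-41616\ge 0$, which holds for all $x\ge 20809$; and $x\ge y+2\ge 111\cdot 210+2>20809$. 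The delicate step is the middle one: the crude bound $\delta_a\ge \tfrac x{102}-1$ loses about a unit and is not sufficient, so one must keep the sharper form of Lemma \ref{oov2} (which for large $x$ gives $\delta_a>\tfrac x{102}-o(1)$) and simultaneously exploit the favourable sign of $W$ — these two small gains are exactly what close the gap.
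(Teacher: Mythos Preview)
Your proof is correct and follows essentially the same strategy as the paper: compare the difference relation~(\ref{eqov4}) for $u$ with the corresponding relation for $a$ (derived from (\ref{eqo5}) or (\ref{eqo4.1})), exploit that for $x\ge y+2$ one has $g_{t,k}+g(x,t,k)>g$, and then invoke Lemma~\ref{oov2}. The paper phrases the key inequality as $u(x,t,k)\le a(x,t,k)$ and subtracts the two difference equations directly; you instead write $u(x)-a(x)=\bigl(W+b(x)-v(x)\bigr)/x$ and isolate the term $W\bigl(\tfrac1{x+2}-\tfrac1x\bigr)$, whose sign you control via $W<0$. These two organisations are equivalent (since $x(u(x)-a(x))=W+b(x)-v(x)$, the positivity of $a(x)-u(x)$ and the negativity of $W$ are two faces of the same coin), and both lead to $(x+2)\delta\ge (x+1)\delta_a+O(x)$, after which the sharper form $\delta_a>\tfrac{x}{102}-\tfrac{202}{x+1}$ of Lemma~\ref{oov2} closes the estimate. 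Your write-up is more explicit than the paper's terse argument (and avoids the apparent typo in the paper's intermediate inequality), but the underlying idea is the same.
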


\begin{proof}
From (\ref{eqov3}) for $x+2$ and $x$ we get
\begin{align}\label{eqov4}
&2d_{t,k} +2u(x,t,k) +(x+2)(u(x+2,t,k) -u(x,t,k)) +\notag \\
&v(x+2,t,k)-v(x,t,k) =(x+3)^2
\end{align}
Since $u(x,t,k) \le a(x,t,k)$ and $v(x,t,k) \ge b(x,t,k)$ if $u(x,t,k) =v(x,t,k)$, (\ref{eqoov4}) and (\ref{eqov4}) give $u(x+2,t,k) -u(x,t,k) \ge (x+1)(a(x+2,t,k)-a(x,t,k))/(x+2) -1$. Use Lemma \ref{oov2}. The second inequality follows from the first one, because $x(x+1) \ge 102\cdot 204(x+2)$ if $x\ge 111\cdot 210$.
\end{proof}

\begin{lemma}\label{=o1}
Assume $k\le t\le 200k$, $t+k\ge 102 \cdot 229$. Then $u(y+2,t,k) - a(y,t,k)  \ge -2 + \frac{y(y+1)}{102(y+2)}\ge 202$.
\end{lemma}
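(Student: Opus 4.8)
The plan is to pit the defining relation (\ref{eqov3}) for the pair $u(y+2,t,k),v(y+2,t,k)$ against the relation (\ref{eqo5}) for the pair $a(y,t,k),b(y,t,k)$, i.e.\ to run the comparison used in the proof of Lemma \ref{ov6+}, but now across the index $y$ at which the $u$-sequence is attached to the $a$-sequence.

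First I would subtract the case $s=y$ of (\ref{eqo5}) from the case $x=y+2$ of (\ref{eqov3}). Using $\binom{y+5}{3}-\binom{y+3}{3}=(y+3)^2$, the fact that the $d_{t,k}$-terms combine to $2d_{t,k}$, and the rewriting $(y+2)u(y+2,t,k)-ya(y,t,k)=(y+2)\bigl(u(y+2,t,k)-a(y,t,k)\bigr)+2a(y,t,k)$, one gets the identity
\[
(y+2)\bigl(u(y+2,t,k)-a(y,t,k)\bigr)=(y+3)^2-2d_{t,k}-2a(y,t,k)+b(y,t,k)+\bigl(g-g_{t,k}-g(y,t,k)\bigr)-v(y+2,t,k);
\]
the structural point is that $g-g_{t,k}-g(y,t,k)\ge 0$, which is exactly the defining property of $y$ as the largest admissible index with $g_{t,k}+g(y,t,k)\le g$. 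Next I would eliminate the bulky combination $(y+3)^2-2d_{t,k}-2a(y,t,k)+b(y,t,k)$ by inserting, at $s=y$, the recursion $(s+1)\bigl(a(s+2,t,k)-a(s,t,k)\bigr)+2a(s,t,k)+b(s+2,t,k)-b(s,t,k)=(s+3)^2-2d_{t,k}-\alpha$ obtained in the proof of Lemma \ref{oov2}; this turns it into $(y+1)\delta_A+b(y+2,t,k)+\alpha$, where $\delta_A:=a(y+2,t,k)-a(y,t,k)$. Discarding the nonnegative terms $b(y+2,t,k)$ and $g-g_{t,k}-g(y,t,k)$ and using $v(y+2,t,k)\le y+1$ then yields $(y+2)\bigl(u(y+2,t,k)-a(y,t,k)\bigr)\ge (y+1)(\delta_A-1)+\alpha$.

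Now I would apply Lemma \ref{oov2} at $s=y$: its hypotheses hold because $t\ge k\ge t/200$ is the assumption $k\le t\le 200k$ and $y+1>\alpha=202$ (since $y\ge t+k+1\ge 102\cdot 229+1$), so $(y+1)\delta_A>\frac{y(y+1)}{102}-\alpha$. With $\alpha=202$ this gives $(y+2)\bigl(u(y+2,t,k)-a(y,t,k)\bigr)>\frac{y(y+1)}{102}-(y+1)$, hence $u(y+2,t,k)-a(y,t,k)>\frac{y(y+1)}{102(y+2)}-\frac{y+1}{y+2}\ge -2+\frac{y(y+1)}{102(y+2)}$, which is the first inequality. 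For the second, the identity $\frac{y(y+1)}{y+2}=y-1+\frac{2}{y+2}$ gives $\frac{y(y+1)}{102(y+2)}>\frac{y-1}{102}\ge\frac{t+k}{102}\ge 229$, so $-2+\frac{y(y+1)}{102(y+2)}>227\ge 202$.

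The one delicate point — and the only place I expect any real friction — is the bookkeeping in the subtraction step: (\ref{eqov3}) carries the total target genus $g$ while (\ref{eqo5}) carries $g_{t,k}+g(y,t,k)$, and the computation closes only because their difference is nonnegative at $s=y$. Bounding $2a(y,t,k)$ directly through Lemma \ref{oov3} instead would leave a leading constant too close to $\tfrac1{102}$ to absorb the loss of $-(y+1)$, so routing through the $a$-recursion and Lemma \ref{oov2} seems essentially forced; everything else is routine manipulation of binomial coefficients together with the standing bounds $b(y+2,t,k)\ge 0$, $0\le v(y+2,t,k)\le y+1$ and $g_{t,k}+g(y,t,k)\le g$.
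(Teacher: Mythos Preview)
Your proof is correct and follows essentially the same route as the paper: both arguments arrive at the identity
\[
(y+2)\bigl(u(y+2,t,k)-a(y,t,k)\bigr)=(y+1)\delta_A+b(y+2,t,k)+\alpha+\bigl(g-g_{t,k}-g(y,t,k)\bigr)-v(y+2,t,k)
\]
and then drop the nonnegative terms and invoke Lemma~\ref{oov2}. The only cosmetic difference is that the paper compares the defining relations for $a$ and $u$ both at level $y+2$ and then uses the genus identity $g(y+2,t,k)-g(y,t,k)=\delta_A-202$ to pass to $a(y,t,k)$, whereas you compare $a$ at level $y$ with $u$ at level $y+2$ and then use the $a$-recursion (\ref{eqoov4}); the two manipulations are algebraically equivalent. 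Your write-up is in fact more explicit than the paper's, which is terse and contains a couple of evident typos.
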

\begin{proof}
Recall that $(y+2)a(y+2,t,k) +b(y+2,t,k) -g(y+2,t,k)-g_{t,k} = (y+2)u(y+2,t,k) +v(y+2,t,k) -g$ with $g(y+2,t,k) -g_{t,k} > g \ge g_{t,k}+g(y,t,k)$, $g(y+2,t,k) -g(y,t,k) =
a(y+2,t,k)-a(y,t,k) -202$ and hence $(y+1)(a(y+2,t,k) -a(y,t,k)) \le (y+2)(u(y+2,t,k) -u(y,t,k))$. Use Lemma \ref{oov2} to get the first inequality and the inequality $y\ge 111\cdot 210$
to get the second inequality.
\end{proof}

\begin{lemma}\label{ov6.1}
Assume $k\le t\le 200k$, $t+k\ge 102 \cdot 229$. Fix an integer $x\ge y+2$ such that $x\equiv
y\pmod{2}$ and $x\ge 210\cdot 111$. Set $\delta := u(x+2,t,k)-u(x,t,k)$. Let $e$ be the minimal integer $z$ with $(z-1)(\delta -z)\le
v(x+2,t,k)$. Then $e$ exists and $e\le 201$.
\end{lemma}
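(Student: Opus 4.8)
The plan is to reduce the statement to two estimates — a lower bound on $\delta = u(x+2,t,k) - u(x,t,k)$ and a trivial upper bound on $v(x+2,t,k)$ — from which both the existence of $e$ and the bound $e \le 201$ follow by inspecting the function $z \mapsto (z-1)(\delta - z)$. The upper bound is immediate from (\ref{eqov3}) with $x$ replaced by $x+2$: $0 \le v(x+2,t,k) \le x+1$. The lower bound is the substantive point, and it is exactly the conclusion of Lemma \ref{ov6+}, namely $\delta \ge -2 + \frac{x(x+1)}{102(x+2)} \ge 202$. I would reprove it here only to check that the hypotheses match: the assumption $y \ge 111 \cdot 210$ of Lemma \ref{ov6+} enters its proof solely through $x \ge y+2$, to guarantee $x + 1 > \alpha = 202$ (so Lemma \ref{oov2} applies to $s = x$) and $x(x+1) \ge 102 \cdot 204 (x+2)$ (so the bound exceeds $202$), and both of these are supplied directly by the present hypothesis $x \ge 210 \cdot 111$. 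The argument itself: since $x \ge y+2$ and $g(s,t,k)$ is strictly increasing for $s \ge t+k+3$ (here $t+k \ge 102 \cdot 229$ is used), maximality of $y$ forces $g(x,t,k) > g - g_{t,k}$; subtracting (\ref{eqov3}) from the $s=x$ case of (\ref{eqo5}) then gives $x(u(x,t,k)-a(x,t,k)) = g - g_{t,k} - g(x,t,k) - v(x,t,k) + b(x,t,k) < x$, so $u(x,t,k) \le a(x,t,k)$, and likewise $u(x+2,t,k) \le a(x+2,t,k)$; feeding these into the difference (\ref{eqov4}) of the $x$ and $x+2$ cases of (\ref{eqov3}), comparing with (\ref{eqo4.1}) for $s = x+2$, and applying Lemma \ref{oov2} to $a(x+2,t,k) - a(x,t,k)$ yields the claimed inequality for $\delta$.

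Granting $\delta \ge 202$ and $v(x+2,t,k) \le x+1$, the conclusion is routine. The function $z \mapsto (z-1)(\delta - z)$ vanishes at $z = 1$ and is non-decreasing for $1 \le z \le \delta/2$, so its defining condition is met somewhere in this range and $e$ exists; and $e \le 201$ because either $\delta \le 404$, which forces $e < \delta/2 \le 202$, or $\delta > 404$, in which case it suffices to check that the threshold is already reached at $z = 201$, i.e. $200(\delta - 201) \ge v(x+2,t,k)$, and by the two estimates above this reduces to a numerical inequality in $x$ valid for $x \ge 210 \cdot 111$.

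I expect the only genuine obstacle to be the strength of the lower bound on $\delta$ in the unfavourable regime $t \approx 200k$, where Lemma \ref{oov2} only gives $a(x+2,t,k) - a(x,t,k) \gtrsim x/102$ (rather than the $\gtrsim x/3$ available when $t \le 3k$), so that $\delta$ can be as small as about $x/102$; it is precisely this worst case that pins down the thresholds $t+k \ge 102 \cdot 229$, $x \ge 210 \cdot 111$ and the choice $\alpha = 202$. Everything else is bookkeeping with (\ref{eqov3}), (\ref{eqo5}), (\ref{eqo4.1}), (\ref{eqov4}) and Lemma \ref{oov2}.
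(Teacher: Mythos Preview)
Your proposal is correct and follows essentially the same route as the paper: bound $v(x+2,t,k)\le x+1$ from (\ref{eqov3}), invoke Lemma~\ref{ov6+} for $\delta \ge -2 + \tfrac{x(x+1)}{102(x+2)}\ge 202$, and reduce $e\le 201$ to the numerical inequality $200(\delta-201)\ge x+1$. Your additional care in verifying that the hypothesis $y\ge 111\cdot 210$ of Lemma~\ref{ov6+} is subsumed by the present assumption $x\ge 210\cdot 111$, and your case split on the size of $\delta$, are elaborations rather than a different argument; the paper simply cites Lemma~\ref{ov6+} and checks the resulting inequality in one line.
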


\begin{proof}Since $v(x+2,t,k) \le x+1$ and $\delta > -2 +\frac{x(x+1)}{102(x+2)} $ (Lemma \ref{ov6+}) it is
sufficient to check
that  $200 (-203 +\frac{x(x+1)}{102(x+2)}) \ge x+1$, i.e. $198x^2\ge (203\cdot 102\cdot 200 -200)x +400\cdot 203\cdot 102 +102$, which is true
if $x \ge 210\cdot 111$.
\end{proof}

\begin{lemma}\label{ov2+3}
Assume $x\ge y+2$, $k\le t\le 200k$, $t+k\ge 102 \cdot 229$ and $y \ge 111\cdot 210$. Fix an integer $x\ge y+2$ such that $x\equiv y \pmod{2}$. Then $2u(x,t,k) \ge
u(x+4,t,k)-u(x+2,t,k) +202$ and $2u(x,t,k)
\ge u(x+2,t,k)-u(x,t,k) +202$.
\end{lemma}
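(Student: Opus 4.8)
The plan is to adapt the proof of Lemma~\ref{oov5}, replacing the integers $a(s,t,k)$ and $b(s,t,k)$ by $u(x,t,k)$ and $v(x,t,k)$ and using the defining recursion~(\ref{eqov4}) in place of~(\ref{eqoov4}). The two displayed inequalities are proved the same way; I describe the argument for $2u(x,t,k)\ge u(x+2,t,k)-u(x,t,k)+202$, the other one being identical except that one applies~(\ref{eqov4}) at level $x+2$ instead of at level $x$ and uses Lemma~\ref{ov6+} to pass from $u(x+2,t,k)$ back to $u(x,t,k)$.

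First I would argue by contradiction. Suppose the inequality fails for some $x\ge y+2$ with $x\equiv y\pmod 2$, i.e.\ $\delta:=u(x+2,t,k)-u(x,t,k)\ge 2u(x,t,k)-201$. Substituting this into~(\ref{eqov4}) and using only the trivial bounds $v(x+2,t,k)\ge 0$ and $v(x,t,k)\le x-1$ eliminates $\delta$ and gives an inequality of the shape
\[
(2x+6)\,u(x,t,k)\ \le\ x^2+c_1x+c_2-2d_{t,k}\qquad (A_x)
\]
for explicit small constants $c_1,c_2$. The essential feature is that $(A_x)$ no longer involves $\delta$: it is a numerical assertion about $u(x,t,k)$ alone, for the given $x$.

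Next I would show that $(A_x)$ is false for \emph{every} $x\ge y+2$ with $x\equiv y\pmod 2$, which is the desired contradiction. This goes by induction on $x$. For the inductive step, assume $(A_{x-2})$ is false. Since $u(x,t,k)-u(x-2,t,k)\ge 202$ by Lemma~\ref{ov6+} and $u(x,t,k)\ge 202$ (as $u(\,\cdot\,,t,k)$ is increasing on $\{x\ge y+2\}$ by Lemma~\ref{ov6+}, with $u(y+2,t,k)\ge a(y,t,k)+202\ge 202$ by Lemma~\ref{=o1}), the identity
\[
(2x+6)u(x,t,k)-(2x+2)u(x-2,t,k)=(2x+2)\bigl(u(x,t,k)-u(x-2,t,k)\bigr)+4u(x,t,k)
\]
shows that from stage $x-2$ to stage $x$ the left side of $(A_x)$ increases by much more than the right side (the $-2d_{t,k}$ terms cancelling), so $(A_x)$ is false. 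For the base case $x=y+2$ I would bound $u(y+2,t,k)$ from below, either through Lemma~\ref{=o1} together with the lower bound for $a(y,t,k)$ extracted from~(\ref{eqoov0}) and Lemma~\ref{oov1}, or directly from~(\ref{eqov3}) at $x=y+2$ (using $v(y+2,t,k)\le y+1$, $g\ge g_{t,k}$ and the vanishing of $h^1(\Oo_{C_{t,k}}(y+2))$), which expresses $(y+2)u(y+2,t,k)$ in terms of $I_{t,k}(y+2)$. Substituting the explicit value of $I_{t,k}(y+2)$ from~(\ref{eqoov1}) into the negation of $(A_{y+2})$ then reduces the base case to a polynomial inequality in $t,k,y$, to be checked with the hypotheses $t+k\ge 102\cdot 229$, $y\ge 111\cdot 210$ and $k\le t\le 200k$ (so in particular $k\ge (t+k)/201$ and $y\ge t+k+1$).

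I expect the base case $x=y+2$ to be the only genuine obstacle. The difficulty is that $y$ can be as small as $t+k+1$, in which case $d_{t,k}$ has the same order of magnitude as $y^2$; the negation of $(A_{y+2})$ then amounts, up to lower-order terms, to $I_{t,k}(y+2)+d_{t,k}\ge \frac12 y^2$, an inequality in which the leading cubic contributions almost cancel, so that it survives only by virtue of the $g_{t,k}$ term (of size roughly $t^3/3$) and of the numerical thresholds --- which is exactly where the bounds $t+k\ge 102\cdot 229$ and $y\ge 111\cdot 210$ are consumed. The remaining ingredients --- eliminating $\delta$, the monotone inductive step, and deducing the contradiction --- are routine and mirror Lemma~\ref{oov5}.
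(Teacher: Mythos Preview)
Your plan is correct and follows essentially the same route as the paper: argue by contradiction, substitute into (\ref{eqov4}) to obtain an inequality $(A_x)$ in $u(x,t,k)$ alone, run the inductive step via Lemma~\ref{ov6+}, and treat the base case $x=y+2$ separately; the second inequality is handled analogously from (\ref{eqov4}) at level $x+2$.

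The only divergence is in the base case, where you overcomplicate matters. The paper uses your first option: from Lemma~\ref{=o1} one has $u(y+2,t,k)\ge a(y,t,k)+202$, and plugging this into $(A_{y+2})$ yields precisely (up to harmless constants) the inequality (\ref{eqoov5}) with $s=y$, which was already shown to fail in the proof of Lemma~\ref{oov5} under the present hypotheses. So no fresh polynomial verification in $t,k,y$ is needed, and in particular the $g_{t,k}$ term never enters. Your second option---going directly through (\ref{eqov3}) and invoking $g\ge g_{t,k}$---would also work, but is a detour: the whole point of Lemma~\ref{=o1} is to reduce statements about $u(y+2,t,k)$ to statements about $a(y,t,k)$, where Lemma~\ref{oov5} is already available. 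The anticipated ``genuine obstacle'' in the base case thus evaporates once you recognize that the required inequality has already been proved.
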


\begin{proof}
Assume $2u(x,t,k) \le u(x+2,t,k)-u(x,t,k)+201$. Since $v(x+2,t,k)\ge 0$ and $v(x,t,k) \le x-1$, (\ref{eqov4}) give 
\begin{equation}\label{eq=o1}2d_{t,k} +(2x+4)u(x,t,k) -201(2x+4)
-x+1\le (x+3)^2\end{equation} Since $(x+3) ^2 -(x+1)^2 =2x+8$, $201(2x+4)-x -201(2(x-2)+4) -x+2 = 802$ and $u(x,t,k) \ge u(x-2,t,k)+202$ by Lemma \ref{ov6+}, it is sufficient to disprove (\ref{eq=o1}) when $x=y+2$.
Since $u(y+2,t,k) \ge a(y,t,k) +202$ by Lemma \ref{=o1}, it is sufficient to prove that
\begin{equation}\label{eq=o2}
2d_{t,k} +(2y+8)a(y,t,k) > y^2+9y+22
\end{equation}
See the contradiction coming from the case $y=s$ of (\ref{eqoov4}).

Now assume $2u(x,t,k) \le u(x+4,t,k)-u(x+2,t,k) +201$. Since $v(x+4,t,k) \ge 0$, $v(x+2,t,k) \le x+1$ and $u(x+2,t,k) \ge
u(x,t,k)-2+\frac{x(x+1}{102(x+2)}$, the case $x':= x+2$ of (\ref{eqov4}) gives
\begin{equation}\label{eqov2+2}
2d_{t,k} + (2x+10)u(x,t,k) \le (x+5)^2+x+1+201(2x+4) +2(x+4) -\frac{x(x+1)(x+4)}{102(x+2)}
\end{equation}
As in the first part of the proof we reduce to prove an inequality weaker than (\ref{eq=o2}).
\end{proof}

\begin{lemma}\label{=o2}
Assume $x\ge y+2$, $k\le t\le 200k$, $k\le t \le 200k$, $t+k\ge 102 \cdot 229$ and $y \ge 111\cdot 210$. Then $2a(y,t,k) \ge u(y+2,t,k)-a(y,t,k)+202$ and
$2a(y,t,k) \ge u(y+4,t,k) -u(y+2)+202$.
\end{lemma}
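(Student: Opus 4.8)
The plan is to imitate the proof of Lemma \ref{ov2+3}, replacing Lemma \ref{ov6+} by Lemma \ref{=o1} and Lemma \ref{oov2} by Lemma \ref{oov2.1}. The two inputs I will feed in are: first, by Lemma \ref{=o1} one has $u(y+2,t,k)\ge a(y,t,k)+202$; and second, extending the definition of $u(x,t,k),v(x,t,k)$ to $x=y$ through (\ref{eqov3}), subtracting (\ref{eqo5}) at $s=y$ from (\ref{eqov3}) at $x=y$ gives $y(u(y,t,k)-a(y,t,k))=g-g_{t,k}-g(y,t,k)-v(y,t,k)+b(y,t,k)$, whence, using $g\ge g_{t,k}+g(y,t,k)$ and the bound $g<g_{t,k}+g(y+2,t,k)$ coming from the maximality of $y$ (together with the elementary estimate $a(y+2,t,k)-a(y,t,k)\le y+7$ from (\ref{eqoov4})), one gets $a(y,t,k)\le u(y,t,k)\le a(y,t,k)+1$. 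These two facts let me pass between the $a$'s appearing in the statement and the corresponding $u$'s at a cost of at most a few units.

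With these in hand I would, for each of the two claimed inequalities, assume it fails and run the algebra used for (\ref{eqoov5}) and (\ref{eq=o1}): subtract consecutive copies of (\ref{eqov3}) (i.e. use (\ref{eqov4})) to kill the binomial coefficients, drop the terms $v(\bullet,t,k)$ using $0\le v\le\bullet-1$, and substitute $u(y+2,t,k)\ge a(y,t,k)+202$. This converts each failed inequality into one of the form
$$(2y+c)\,a(y,t,k)+2d_{t,k} < y^2 + c'y + c''$$
with $c\in\{6,10\}$ and $c',c''$ explicit (of sizes roughly $210$ and at most $10^3$). So the whole lemma reduces to establishing $(2y+c)\,a(y,t,k)+2d_{t,k}\ge y^2+c'y+c''$.

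To prove this estimate I would combine two lower bounds. From (\ref{eqo5}) at $s=y$, namely $y(d_{t,k}+a(y,t,k))=\binom{y+3}{3}-3+g_{t,k}+g(y,t,k)-b(y,t,k)$, dropping $g_{t,k},g(y,t,k)\ge0$ and using $b(y,t,k)\le y-2$ yields $a(y,t,k)+d_{t,k}\ge\frac{(y+1)(y+5)}{6}$ (a harmless variant holds when $y=t+k+1$). Separately, monotonicity of $a(\cdot,t,k)$ and the increment bound $a(s+2,t,k)-a(s,t,k)>-1+s/102$ of Lemma \ref{oov2}, summed for $s=t+k+1,\dots,y-2$ and combined with $a(t+k+1,t,k)\ge(t+k)/102$ (Lemma \ref{oov2.1}), give
$$a(y,t,k)\ \ge\ \frac{t+k}{102}+\frac{(y-t-k-1)(t+k+y-205)}{408}.$$
Feeding this together with $2d_{t,k}\ge(t+k)^2+2(t+k)$ into $(2y+c)\,a(y,t,k)+2d_{t,k}$ and using $t+k\ge102\cdot229$ and $y\ge111\cdot210$ produces the required inequality with a wide margin.

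The step I expect to be the main obstacle is the regime in which $y-t-k$ is small: there $a(y,t,k)$ is only of linear size in $t+k$, the bound $a(y,t,k)+d_{t,k}\ge\frac{(y+1)(y+5)}{6}$ is essentially vacuous (almost all of it residing in $d_{t,k}$ alone), and one must instead observe that $2d_{t,k}\ge(t+k)^2+2(t+k)$ already carries the $y^2$-term since $t+k$ is then close to $y$, and check that the surplus $\tfrac1{51}(t+k)^2$ coming from $(2y+c)\,a(y,t,k)\ge(2y+c)(t+k)/102$ dominates the linear term $c'y$; this is precisely where the numerical hypotheses $t+k\ge23358$ and $y\ge23310$ are needed. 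In the opposite regime, $y-t-k$ large, the increment bound forces $(2y+c)\,a(y,t,k)$ to grow like a cubic in $y-t-k$, so the inequality is immediate there.
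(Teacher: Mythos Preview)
Your route is substantially more involved than the paper's, and it contains a concrete error in the ``small $y-t-k$'' regime. You assert that $2d_{t,k}\ge (t+k)^2+2(t+k)$, but $2d_{t,k}=t(t+1)+k(k+1)=t^2+k^2+t+k=(t+k)^2-2tk+(t+k)$, so your claim is equivalent to $-2tk\ge t+k$, which is false for all positive $t,k$. This is not harmless: with only the weak estimate $a(y,t,k)\ge (t+k)/102$ and the correct value of $2d_{t,k}$, at $y=t+k+1$ the left side of your target inequality $(2y+c)a(y,t,k)+2d_{t,k}$ is about $(t+k)^2/51+(t+k)^2-2tk$, and for $t=k$ this is roughly $(1/51+1/2)(t+k)^2\approx 0.52(t+k)^2$, well below the $(t+k)^2$ needed on the right. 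Your plan can be repaired by replacing the weak bound by the sharper $a(t+k+1,t,k)\ge (2tk+2t+2k+3)/(t+k+1)$ from~(\ref{eq==o1}), which supplies the missing $2tk$ term, but you did not invoke this.

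The paper avoids all of this by a single observation you overlooked: $u(y+2,t,k)\le a(y+2,t,k)$. Indeed, comparing~(\ref{eqov3}) at $x=y+2$ with~(\ref{eqo5}) at $s=y+2$ gives
\[
(y+2)\bigl(u(y+2,t,k)-a(y+2,t,k)\bigr)=g-g_{t,k}-g(y+2,t,k)-v(y+2,t,k)+b(y+2,t,k),
\]
and the maximality of $y$ forces $g-g_{t,k}-g(y+2,t,k)\le -1$, so the right side is at most $y-1<y+2$ and hence $u(y+2,t,k)\le a(y+2,t,k)$. The first inequality of the lemma then follows in one line from Lemma~\ref{oov5}: $u(y+2,t,k)-a(y,t,k)\le a(y+2,t,k)-a(y,t,k)\le 2a(y,t,k)-202$. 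For the second inequality the paper subtracts the level-$(y+2)$ and level-$(y+4)$ instances of~(\ref{eqov3}) and~(\ref{eqo5}) to compare $u(y+4,t,k)-u(y+2,t,k)$ with $a(y+4,t,k)-a(y+2,t,k)$ up to terms governed by $g(y+4,t,k)-g(y+2,t,k)$ and the $b$'s and $v$'s, and then applies the second bound in Lemma~\ref{oov5}. No passage through an inequality of the shape $(2y+c)a(y,t,k)+2d_{t,k}\ge y^2+c'y+c''$ is needed.
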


\begin{proof}
The first inequality is true by Lemma \ref{oov5}, because $u(y+2,t,k) \le a(y+2,t,k)$. From ({eqov4}) for $x':= x+2$ and (\ref{eqo4.1}) for $s=y+2$ we get
$(y+4)(u(y+4,t,k)-u(y+2,t,k)) + v(u+4,t,k)-v(u+2,t,k) = (y+4)(a(u+4,t,k)-a(u+2,t,k)+b(u+4,t,k)-b(u+2,t,k) +g(y+4,t,k)-g(y+2,t,k)$.
Use Lemma \ref{oov5} and that $g(y+4,t,k)-g(y+2,t,k) \ge -106 +(y+2)/102$.
\end{proof}

\begin{lemma}\label{=o3}
Assume $k\le t\le 200k$, $t+k\ge 102 \cdot 229$ and $y \ge 211\cdot 210$. Set $\delta:= u(y+2,t,k)-a(y,t,k)$ and $\tau := u(y+4,t,k)-u(y+2,t,k)$. Let $e$ (resp $f$)
be the minimal positive integer such that $(e-1)(\delta -e)\le v(y+2,t,k)$ (resp. $(f-1)(\tau -f) \le v(y+4,t,k)$). Then $e\le 201$ and $f\le 201$.
\end{lemma}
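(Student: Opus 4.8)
The plan is to follow the template of Lemma~\ref{ov6.1}, reducing each of the two claims to an elementary quadratic inequality in $y$ by playing the linear upper bounds on the $v$'s against the quadratic lower bounds on $\delta$ and $\tau$.

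First I would record, from the defining relations~(\ref{eqov3}) with $x=y+2$ and $x=y+4$, the trivial bounds $0\le v(y+2,t,k)\le y+1$ and $0\le v(y+4,t,k)\le y+3$. Next come the lower bounds. For $\delta=u(y+2,t,k)-a(y,t,k)$ --- the single transition step from the $a$-phase to the $u$-phase --- Lemma~\ref{=o1} gives $\delta\ge -2+\frac{y(y+1)}{102(y+2)}$; its hypotheses hold since $y\ge 211\cdot 210\ge 111\cdot 210$ and $k\le t\le 200k$, $t+k\ge 102\cdot 229$. For $\tau=u(y+4,t,k)-u(y+2,t,k)$, which is internal to the $u$-phase and therefore \emph{not} governed by Lemma~\ref{=o1}, I would instead apply Lemma~\ref{ov6+} with $x:=y+2$ (legitimate, since $y+2\ge y\ge 111\cdot 210$), obtaining $\tau\ge -2+\frac{(y+2)(y+3)}{102(y+4)}$. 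In particular $\delta,\tau\ge 202$, so the quantities $(z-1)(\delta-z)$ and $(z-1)(\tau-z)$ are nonnegative for $z\le 201$ and the integers $e,f$ are well defined.

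Exactly as in the proof of Lemma~\ref{ov6.1}, to finish it then suffices to verify that $z=201$ already satisfies the defining condition, that is
$$200\Bigl(-203+\frac{y(y+1)}{102(y+2)}\Bigr)\ge y+1
\qquad\text{and}\qquad
200\Bigl(-203+\frac{(y+2)(y+3)}{102(y+4)}\Bigr)\ge y+3 .$$
Clearing denominators turns each into a quadratic inequality in $y$ whose leading coefficient, $98=200-102$, is positive, so each holds for all $y$ past an explicit root; a short computation shows that root lies below $211\cdot 210$. The second inequality is no harder than the first, since $\frac{(y+2)(y+3)}{y+4}\ge\frac{y(y+1)}{y+2}$ while the right-hand side increases only by $2$, so it is enough to check the first with a little slack.

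The main point to be careful about is the lower bound for $\tau$: one must use the \emph{internal-step} Lemma~\ref{ov6+} at $x=y+2$, not the \emph{transition} Lemma~\ref{=o1}, and then confirm that the constant $211\cdot 210$ is genuinely large enough to clear the quadratic. The margin is not lavish --- for $y$ near $211\cdot 210$ one already has $\delta$ only slightly above $430$ --- so the two displayed inequalities deserve an explicit numerical estimate rather than a hand-wave.
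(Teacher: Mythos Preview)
Your proposal is correct and follows essentially the same approach as the paper. The paper's own proof is slightly terser: for $f$ it simply invokes Lemma~\ref{ov6.1} with $x=y+2$ rather than unpacking that lemma via Lemma~\ref{ov6+}, but since the proof of Lemma~\ref{ov6.1} is exactly the computation you wrote out, there is no substantive difference.
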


\begin{proof}
The assertion on $f$ is true by the case $x=y+2$ of Lemma \ref{ov6.1}. Since $v(y+2,t,k) \le y+1$, to prove the assertion on $e$ it is sufficient to prove
that $200\delta \ge 200\cdot 201 + y+1$. By Lemma \ref{=o1} it is sufficient to prove that $ -400 + 200\frac{y(y+1)}{102(y+2)} \ge 200\cdot 201 +y+1$, i.e.
$200y(y+1) \ge 200\cdot 203\cdot 102(y+2) + 102y(y+1)$, contradicting our assumption on $y$.
\end{proof}

\begin{lemma}\label{ov9}
If $k\le t\le 200k$ and $t\ge 3\cdot 10^4$, then $y\le m-7$.
\end{lemma}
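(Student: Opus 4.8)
The plan is to prove the (a priori stronger, but in fact equivalent) statement that $g(m-5,t,k)+g_{t,k}>g$. Indeed $y\equiv t+k+1\equiv m-1\pmod 2$, and since $t+k\ge t\ge 3\cdot 10^4>203\cdot 102+2$, Lemma \ref{oov2} gives $a(s+2,t,k)-a(s,t,k)>-1+s/102>202=\alpha$ for every $s\ge t+k+1$, so $s\mapsto g(s,t,k)$ is strictly increasing along $s\equiv t+k+1\pmod 2$; hence $g(m-5,t,k)+g_{t,k}>g$ forces $y<m-5$, i.e. $y\le m-7$.

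Before the main computation I record the size bounds that make it work. By Lemma \ref{ov10} (applicable as $t\ge 250$) we have $m>1.58\,t$, so $m\ge 47401$ and $t<m/1.58$. To bound $k$: maximality of $t$ gives $g_{t+1}>\tfrac{999999}{10^6}g$, and $g_{t+1}-g_t=t^2-1$, so $g_k\le g-g_t<\tfrac{g}{10^6}+t^2-1$; since $g_k=1+\tfrac{k^3}{3}-\tfrac{k^2}{2}-\tfrac{5k}{6}$, using Remark \ref{v3} ($g<\tfrac{m^3+3m^2+2m-18}{12}$) together with $t<m/1.58$ one gets $k^3<\tfrac{m^3}{4\cdot 10^6}+3t^2+\tfrac{3k^2}{2}+3k$; feeding back the crude consequence $k<0.04\,m$ yields $k<0.036\,m$. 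Consequently $T:=t+k<m/1.58+0.036\,m<0.669\,m$, so $m-6-T>0.33\,m$; in particular $m-5\ge t+k+1$, so $a(m-5,t,k)$ and $b(m-5,t,k)$ are defined.

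Now combine (\ref{eqo5}) at $s=m-5$ with the relation $g(s,t,k)=a(s,t,k)-a(t+k+1,t,k)-101(s-t-k-1)$ (here $\alpha=202$) and the identity $I_{t,k}(s)=\binom{s+3}{3}-s\,d_{t,k}+g_{t,k}-2$ (valid for $s\ge t+k-1$). Eliminating one finds
\[
(m-6)\,a(m-5,t,k)=I_{t,k}(m-5)-W,\qquad W:=1+a(t+k+1,t,k)+101(m-6-T)+b(m-5,t,k),
\]
hence, writing $(m-5)a=(m-6)a+a$ and using $(m-5)d_{t,k}+I_{t,k}(m-5)=\binom{m-2}{3}+g_{t,k}-2$,
\[
(m-5)\bigl(d_{t,k}+a(m-5,t,k)\bigr)=\binom{m-2}{3}+g_{t,k}-2-W+\frac{I_{t,k}(m-5)-W}{m-6}.
\]
Substituting this into (\ref{eqo5}) and using $g\le g_{t,k}+2k^2+2k$ from (\ref{equ1}) collapses the whole expression to
\[
g(m-5,t,k)+g_{t,k}-g\ \ge\ \frac{I_{t,k}(m-5)-W}{m-6}-a(t+k+1,t,k)-101(m-6-T)-2k^2-2k .
\]
It remains to bound the right side from below. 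Since by Lemma \ref{oov1} all coefficients of $I_{t,k}$ are non-negative, $I_{t,k}(m-5)\ge\tfrac43\bigl(\tfrac{m-6-T}{2}\bigr)^3=\tfrac{(m-6-T)^3}{6}$, and with $m-6-T>0.33\,m$ this gives $\tfrac{I_{t,k}(m-5)}{m-6}>0.006\,m^2$. On the other hand $a(t+k+1,t,k)\le\tfrac{I_{t,k}(t+k+1)}{t+k+1}=\tfrac{4+3T+2kt}{T+1}<\tfrac T2+4<m$ (using $kt\le T^2/4$), $101(m-6-T)<101\,m$, $W<103\,m$, and $2k^2+2k<0.0026\,m^2+0.072\,m$ by the bound on $k$. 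Hence the right side exceeds $0.006\,m^2-104-m-101\,m-0.0026\,m^2-0.072\,m$, which is positive for every $m\ge 47401$. This proves $g(m-5,t,k)+g_{t,k}>g$, hence $y\le m-7$.

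The only genuinely delicate point is the last step: the decisive quantity is $\tfrac{I_{t,k}(m-5)}{m-6}$, which is $\Theta(m^2)$, and a naive estimate that retains only $I_{t,k}(m-5)$ itself discards exactly this term and the argument collapses. Making this $m^2$ term dominate $2k^2+2k$ requires controlling $T$ (equivalently $k$) sharply: the trivial bound $k<t<m/1.58$ is far too weak, whereas $k<0.036\,m$, which comes from $g_t$ lying within $t^2$ of $\tfrac{999999}{10^6}g$, suffices. Everything else is a routine calibration of constants.
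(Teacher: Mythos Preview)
Your argument is correct. Both your proof and the paper's hinge on the same three ingredients: Lemma~\ref{ov10} ($m>1.58t$), the bound $g-g_{t,k}\le 2k^2+2k$ from (\ref{equ1}), and a lower bound for $g(\cdot,t,k)$, together with control of $k$ relative to $m$ (or $t$). The paper argues by contradiction: assuming $y>1.58t-7$, it lower-bounds $g(y,t,k)$ by summing the increments $g(s,t,k)-g(s-2,t,k)$ via Lemma~\ref{oov2}, then invokes $k\le t/30$ from Lemma~\ref{u3} to force $g(y,t,k)>2k^2+2k$. You instead evaluate $g(m-5,t,k)$ directly through the closed formula $g(s,t,k)=a(s,t,k)-a(t+k+1,t,k)-101(s-T-1)$ combined with the identity $(s-1)a(s,t,k)=I_{t,k}(s)-W$, and you derive the needed bound $k<0.036m$ from first principles (via $g_k\le g-g_t<g/10^6+t^2-1$ and Remark~\ref{v3}) rather than citing Lemma~\ref{u3}. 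Your route has the minor advantage of being self-contained with respect to the hypothesis $t\ge 3\cdot 10^4$: the paper's proof appeals to Lemma~\ref{u3}, whose stated hypothesis $t\ge 1000^5$ is far stronger (the proof of Lemma~\ref{u3} in fact works for much smaller $t$, so this is evidently a typo, but your derivation sidesteps the issue entirely).
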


\begin{proof}
Since $y\equiv m-1 \pmod{2}$, it is sufficient to prove that $y\le m-6$. Lemma \ref{ov10} gives $m > 1.58t$. Thus it is sufficient to prove that $y-t-k -1 \le 0.58t -k-7$. Assume $y-t-k-1 >0.58t-k-7$. Recall that for $s=t+k+3,\dots ,y$ we have $g(s,t,k) -g(s-2,t,k) \ge
-1 +s/102 $ (Lemma \ref{oov2}). Hence $g(y,t,k) \ge -202(y-t-k-1)/2 +(y-t-k-1)(y+t+k+3)/204 =(y-t-k-1)(y+t+k-10707)/204>
(0.58t-k-8)(1.58t-10714)/204$.   Since $g(y,t,k)  \le 2k^2+2k$ by (\ref{equ1}), we get
$(0.58t-k-8)(1.58t-10714) < 408k(k+1)$. Lemma \ref{u3} gives $t\ge 30k$. Hence $k\le t/30$. We get
$30(17.4t-8)(1.58t -10714) < 408t(t+30)$, which is false if $t \ge 3\cdot 10^4$.
\end{proof}

 \section{Proof $B(x,t,k)$}\label{SprB}

\begin{lemma}\label{ov12}
Fix integers $t, k$ such that $k\le t\le 200k$, $t+k\ge 102 \cdot 229$ and $k^2\ge 207(k+2)$. Fix an integer $x\ge
y+2$ such that $x\equiv y \pmod{2}$. If
$B(x,t,k)$ is true, then $B(x+2,t,k)$ is true.
\end{lemma}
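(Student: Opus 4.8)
The plan is to run the same Horace reduction with respect to a smooth quadric $Q$ as in the inductive step of Lemma~\ref{ov7}, the only new feature being that the arithmetic genus has to be kept equal to $g-g_{t,k}$ instead of being allowed to grow by $\delta-\alpha$. Fix $Q$ and a curve $C_{t,k}$ meeting $Q$ transversally with $C_{t,k}\cap Q$ general, and let $(Y,T)$ satisfy $B(x,t,k)$: so $Y$ is smooth and connected of degree $u(x,t,k)$ and genus $g-g_{t,k}$, $Y\cap C_{t,k}=\emptyset$, $(C_{t,k}\cup Y)\cap Q$ is general, and for every $S\subseteq\mathrm{Sing}(T)$ with $\sharp(S)=v(x,t,k)$ one has $h^i(\Ii_{C_{t,k}\cup Y\cup S}(x))=0$, $i=0,1$. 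Put $\delta:=u(x+2,t,k)-u(x,t,k)$; by Lemma~\ref{ov6+} we have $\delta\ge 202=\alpha$, and by Lemma~\ref{ov6.1} the integer $e$ entering $B(x,t,k)$ satisfies $e\le 201$.

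First I would attach to $Y$ a grid $T'\subset Q$ of $\delta$ lines, of bidegree $(e',\delta-e')$ with $e'\in\{e,e+1\}$, following the three cases of Remark~\ref{exp1} but with $u$, $v$ in the roles of $a$, $b$ and with the incidences calibrated to keep the genus fixed: arrange that exactly $\delta+v(x,t,k)-e'(\delta-e')$ lines of $T'$ meet $Y$, in one point each, the remaining lines of $T'$ being general deformations disjoint from $Y$; then pick $S\subseteq\mathrm{Sing}(T')$ with $\sharp(S)=v(x,t,k)$ on the lines through $Y$, set $\chi:=\bigcup_{o\in S}2o$ and $Y':=Y\cup T'\cup\chi$. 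One has $\deg(Y')=u(x,t,k)+\delta=u(x+2,t,k)$, and the dual-graph count---each of the $v(x,t,k)$ double points $2o$ sitting at a node of $T'$ dropping the arithmetic genus of the total scheme by one, exactly as in the displayed genus computations of Remark~\ref{exp1}---gives $p_a(Y')=g-g_{t,k}$; here the number $\delta+v(x,t,k)-e'(\delta-e')$ lies between $1$ and $\min\{\delta,\,2u(x,t,k)\}=\sharp(Y\cap Q)$ precisely because of Lemmas~\ref{ov6+}, \ref{ov6.1} and \ref{ov2+3}.

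Next I would verify that $h^i(\Ii_{C_{t,k}\cup Y'\cup S'}(x+2))=0$, $i=0,1$, for every $S'\subseteq\mathrm{Sing}(T_1)$ with $\sharp(S')=v(x+2,t,k)$, where $T_1$ is the grid required by $B(x+2,t,k)$, built in the complementary ruling of $Q$ as in step~(c) of the proof of Lemma~\ref{ov7} (its integer $f$ exists and is $\le 201$ by Lemma~\ref{ov6.1} applied to $x+2$). Since $\Res_Q(2o)=\{o\}$, the lines of $T'$ lie on $Q$, and $Y$ is transversal to $Q$, one gets $\Res_Q(C_{t,k}\cup Y'\cup S')=C_{t,k}\cup Y\cup S$, for which $B(x,t,k)$ supplies $h^i(\cdot(x))=0$; on $Q$, after removing the $\delta$ lines of $T'$ by B\'ezout, the remaining assertion is a vanishing $h^i(Q,\Ii_{E,Q}(x+2-e',x+2-\delta+e'))=0$ for $E=((C_{t,k}\cup Y)\cap(Q\setminus T'))\cup S'$, which holds once the two twists dominate the bidegree of $\mathrm{Sing}(T_1)$---guaranteed by Corollary~\ref{C-boundDelta} together with $e',f'\le 202$---and once $\sharp((C_{t,k}\cup Y)\cap(Q\setminus T'))=(x+3-e')(x+3-\delta+e')-v(x+2,t,k)$, which is the analogue of Claim~1 in the proof of Lemma~\ref{ov7} with (\ref{eqov4}) in place of (\ref{eqo4.1}) and follows from the generality of $(C_{t,k}\cup Y)\cap Q$. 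The residual sequence of $Q$ then gives the vanishing for $C_{t,k}\cup Y'\cup S'$. Finally, exactly as in Claim~2 of the proof of Lemma~\ref{ov7}, $Y'$ is the flat limit---as the $\delta$ lines of $T'$ are slid onto $Q$ along an integral parameter curve---of reduced nodal unions of $Y$ with $\delta$ general lines, each of which is smoothable by Remark~\ref{oo1}; so $Y'$ lies in the closure of the locus of smooth connected curves of degree $u(x+2,t,k)$ and genus $g-g_{t,k}$, and, using Remark~\ref{aaa+1+1} for the general such curve, semicontinuity transfers all the vanishings above to a general smooth $Y''$ of that degree and genus with $(C_{t,k}\cup Y'')\cap Q$ general; then $(Y'',T_1)$ witnesses $B(x+2,t,k)$.

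The step I expect to be the real obstacle is the genus calibration in the second paragraph. Unlike in $A(s,t,k)$, the target genus has no room to absorb an extra $\delta-\alpha$, so the bidegree $(e',\delta-e')$ of $T'$, the number of its lines incident to $Y$, and $v(x,t,k)=\sharp(S)$ must be chosen so that the first Betti number of the dual graph of $Y\cup T'$ is cancelled exactly by the length $v(x,t,k)$ of the embedded structure contributed by $\chi$; and one must check simultaneously that $v(x,t,k)\le e'(\delta-e')$, that $1\le\delta+v(x,t,k)-e'(\delta-e')\le\min\{\delta,\,2u(x,t,k)\}$, and that the two bidegrees $x+2-e'$, $x+2-\delta+e'$ appearing on $Q$ are large enough---which is precisely what Lemmas~\ref{ov6+}, \ref{ov6.1}, \ref{ov2+3} and Corollary~\ref{C-boundDelta} are set up to provide.
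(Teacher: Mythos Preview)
Your proposal is correct and follows the same Horace-reduction strategy as the paper's own proof: attach a degree-$\delta$ grid on $Q$ to $Y$ with a controlled number of incidences, add the embedded points $\chi$ so that $p_a(Y')=g-g_{t,k}$, verify the trace count on $Q$ via (\ref{eqov4}), and smooth by sliding the lines off $Q$ and invoking Remark~\ref{oo1}. The only organizational difference is that, since the genus is held fixed in the $B$-step, the paper does not need the three-case split of Remark~\ref{exp1}: it uses a single bidegree-$(e,\delta-e)$ grid throughout (your formula with $e'=e$, taking all $e$ lines $R_j$ and exactly $v(x,t,k)-(e-1)(\delta-e)$ of the lines $L_i$ through $Y\cap Q$), and treats the degenerate case $v(x,t,k)=0$ separately by attaching a smooth rational curve $E\in|\Ii_{p,Q}(1,\delta-1)|$ through a single point of $Y\cap Q$ instead of a reducible grid.
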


\begin{proof}
Fix $Q$, $C_{t,k}$, and $(Y,T_1)$  satisfying $B(x,t,k)$ with (if $v(x,t,k)>0$) a grid $T_1$. Set $\delta := u(x+2,t,k)-u(x,t,k)$.

\quad (a) Assume $v(x,t,k)=0$. Fix $p\in Y\cap Q$ and take a general $E\in |\Ii _{p,Q}(1,\delta-1)|$. We have $E\cap C_{t,k} =\emptyset$ and $E\cap Y =\{p\}$.
Since $E$ intersects almost transversally $Y$ and at a unique point, the nodal curve $Y\cup E$ is smoothable (\cite{hh}, \cite{s}) and $p_a(Y\cup E) =g-g_{t,k}$. Fix any $S'\subset Q\setminus E$ such that $\sharp (S')=v(x+2,t,k)$,
$S'$ contains no point of $(C_{t,k}\cup Y)\cap Q$. We assume that $h^1(Q,\Ii _{S',Q}(x+1,x+3-\delta))=0$. Set $\Sigma := ((C_{t,k}\cup Y)\cap Q)\setminus \{p\}$.
Since $\Sigma$ is general in $Q$ and $\sharp (\Sigma \cup S') = (x+2)(x+4-\delta)$ by (\ref{eqov4}) and the equality $(x+3)^2 -(x+2)(x+4) = 1$, we have
$h^i(Q,\Ii _{E\cup \Sigma \cup S',Q}(x+2,x+2)) =0$, $i=0,1$. Since $h^i(\Ii _{C_{t,k}\cup Y}(x)) =0$, $i=0,1$,
the residual exact sequence of $Q$ gives $h^i(\Ii _{C_{t,k}\cup Y\cup E\cup S'}(x+2)) =0$, $i=0,1$.

\quad (b) Assume $v(x,t,k) >0$. Let $e$ be the maximal positive integer such that $v(x,t,k) > (e-1)(\delta -e)$ and
$e< \delta /2$. Lemma \ref{ov6.1} gives that $e$ exists and that $e\le 201$. The definition of $e$ gives $v(x,t,k) \le e(\delta
-e-1) $. The grid $T_1$ has $e$ lines $R_j\in |\Oo _Q(1,0)|$ and $\delta -e$ lines $M_i\in |\Oo _Q(0,1)|$. Each irreducible component of $T_1$ contains a point of $Y\cap Q$.
We deform $T_1$ to a grid $T\in |\Oo _Q(e,\delta -e)|$ with the same irreducible component $R_j$ of bidegree $(1,0)$ and with $\delta -e$ irreducible component $L_i$ of bidegree $(0,1)$ with $L_i = M_i$ for $1\le i\le v(x,t,k)-(e-1)(\delta -e)$, while the other lines $L_i$ are small deformations of $M_i$ not intersecting $Y\cap Q$. By $B(x,t,k)$ the set $S$ is the union of all points $R_j\cap L_i$
 with either $j\ge 2$ or $j=1$ and $1\le i \le v(x,t,k) - (e-1)(\delta -e)$; we have enough points of $Y\cap Q$ to link all lines $R_j$ and
 the prescribed lines $L_i$, because we only need $2u(x,t,k) \ge \delta$ and we use Lemma \ref{ov2+3}. Let $T$ be the union of all lines $R_j$ and $L_i$. We have $\sharp (\mathrm{Sing}(T))=e(\delta -e)$. Set $\chi := \cup _{o\in S} 2o$
 and $Y':= Y\cup T\cup \chi$. We have $\deg (Y'_{\red}) = u(x+2,t,k)$ and $\chi (\Oo _{Y'}) = \chi (\Oo _Y) +\deg (T) +\sharp (S) -\sharp (\mathrm{Sing}(T))+\sharp (T\cap Y) =
 1-p_a(Y) = 1-g+g_{t,k}$. We have $\Res _Q(C_{t,k} \cup Y') = C_{t,k}\cup Y\cup S$. Fix any $S'\subset Q\setminus T$ containing no point of $Q\cap (C_{t,k}\cup Y)$, with $\sharp (S') =v(x+2,t,k)$ and with
 $h^1(Q,\Ii _{S',Q}(x+2-e,x+2-\delta+e)) =0$. Since $(x+3)^2 -(x+3-e)(x+3-\delta+e) = v(x+2,t,k) +2d_{t,k} +2u(x,t,k) -\sharp (T\cap Y)$
 by (\ref{eqov4}) and $(Y\cap C_{t,k})\cap Q$ are general in $Q$,
 we have $h^i(Q,\Ii _{S'\cup (Y'\cup C_{t,k})\cap Q,Q}(x+2,x+2)) =0$, $i=0,1$. The residual sequence of $Q$ gives $h^i(\Ii _{C_{t,k}\cup Y'\cup S'}(x+2,x+2)) =0$, $i=0,1$. 
We claim that $Y'$ is a flat limit of a family of smooth and connected curves of
  genus $g-g_{t,k}$ and degree $u(x+2,t,k)$. 
  Take a smooth and connected affine curve $\Delta$ and $o\in \Delta$. Set
  $\{o_j\}:= R_j\cap Y$, $1\le j\le e$, $\{q_i\}:= L_i\cap Y$, $1\le i \le v(x,t,k) - (e-1)(\delta -e)$, and $\{e_h\}:= R_1\cap L_h$, $v(x,t,k) - (e-1)(\delta -e) < h\le \delta -e$. 
  We may find flat families $\{R_j(z)\}_{z\in \Delta}$, $1\le j\le e$, and $\{L_i(z)\}_{z\in \Delta }$, $1\le j \le \delta-e$, of lines with the following properties. For all $i, j$ and all $z\in \Delta \setminus \{o\}$
 we have $R_j(o)=R_j$, $R_j(z)\nsubseteq Q$, $o_j\in R_j(z)$, $L_i(o)=L_i$, $L_i(z)\nsubseteq Q$, $q_i\in L_i(z)$ if $1\le i \le v(x,t,k) - (e-1)(\delta -e)$, $L_i(z)\cap R_j(z)=\emptyset$ for all $j$ if  $1\le i \le v(x,t,k) - (e-1)(\delta -e)$ and $L_i(z)\cap R_j(z)\ne \emptyset$, $v(x,t,k) - (e-1)(\delta -e)<i\le \delta -e$, if and only if $j=1$.
  For all $z\in \Delta \setminus \{o\}$ set $Y'_z:= Y\cup \bigcup R_j\cup \bigcup L_i$. The family $\{Y'_z\}_{z\in \Delta}$ is flat. Each $Y'_z$, $z\in \Delta \setminus \{o\}$
  is smoothable (Remark \ref{oo1}).
  Hence $Y'_z$, $z\ne o$, is a flat limit of a family of smooth curves of genus $g-g_{t,k}$ and degree $u(x+2,t,k)$ (Remark \ref{oo1}). Hence there is a flat family
  $\{Y''_z\}_{z\in \Gamma}$, with $\Gamma$ an integral affine curve, $o\in \Gamma$, $Y''_o =Y'$, and $Y''_z$ smooth and of genus $g-g_{t,k}$ for all $z\ne o$. 
 
 \quad ({c}) Now we check that we may take $S'$ in steps (a) and (b) to get a solution of $B(x+2,t,k)$, except that if $v(x+2,t,k) >0$ we shift the two rulings of $Q$. 
 Assume $v(x+2,t,k) >0$ and take $Y$, $S$, $e$, $R_j$, $L_i$ as in step (b). Set $\tau:= u(x+4,t,k) -u(x+2,t,k)$. Let $f$ be the maximal positive integer such that $v(x+2,t,k) > (f-1)(\tau -f)$ and $f<\tau /2$.
Lemma \ref{ov6+} gives that $f$ exists and that $f\le 201$.  Note that $v(x+2,t,k) \le f(\tau -f-1) $. Fix distinct lines
$R'_j\in |\Oo _Q(0,1)|$, $1\le j \le f$, each of them containing a point
 of $Y\cap (Q\setminus T)$. Fix distinct lines $L'_i\in |\Oo _Q(1,0)|$, $1\le i \le \tau-f$, such that $L'_j$ contains a point of $Y\cap Q$ if and only if $1\le i \le v(x+4,t,k) - (f-1)(\tau -f)$. We impose that no line $L'_i$ contain a point of $C_{t,k}\cap Q$. We impose $R'_j\ne L_h$ and $L'_i \ne R_z$ for all $i,j,h,z$. So if $p\in Y\cap Q$ we allow
 that it is contained in a line of $T$ and in one of the lines $R'_j$, $L'_i$, but in this case we assume that they are in different rulings of $Q$. So we may assume that $R_j\ne L'_i$
 and $R'_j\ne L_i$ for all $i,j$.
 Since $\tau \ge \delta$ (use (\ref{eqov4} for $x$ and the integer $x':= x+2$) and $e\le 201$, it is sufficient to use that
 $\sharp (Y\cap Q) =2u(x,t,k) \ge \tau +201$ and that $2u(x,t,k) \ge \delta +201$ (Lemma \ref{ov2+3}). We take as $S'$ the
union of all points $R'_j\cap L'_i$ with either $j>1$ or $j=1$ and $1\le i \le v(x+2t,k) - (f-1)(\tau -f)$.
 We have $h^1(Q,\Ii _{S',Q}(x+2-e,x+2-\delta+e)) =0$, because $\tau-1 \le x+3-e$ (since $\tau \le x-197$ by Lemma \ref{=o3}).
and $f\le 201
 \le x+2-\delta+e$ (Lemma \ref{ov6+}). Let $N'$ (resp. $N''$) be the set of all points of $Y\cap Q$ contained in some line
$R'_j$ (resp. $L'_i$). Set $N:= N'\cup N''$. Note that $N'\cap N''=\emptyset$.
 Now take the last deformation made in step (b) with $\Gamma$ as its parameter space. Since $Y$ is transversal to $Q$, restricting $\Gamma$ to a neighborhood
 of $o$ and then taking a finite covering we may assume that $\{Y''_z\}_{z\in \Gamma }$ has $\sharp (N)$ sections $m_p$, $p\in N$, with $m_p(o) =p$ and $m_p(z)\in Q\cap Y''_z$
 for all $z$. For each $z\in \Gamma$ and any $p\in N'$, say $p\in Y\cap R'_j$ (resp $p\in N''$, say $p\in L'_i\cap Y$), let $R'_j(z))$ (resp. $L'_i(z)$)
 be the line of bidegree $(0,1)$ (resp. $(1,0)$) containing the point $m_p(z)$. If $L'_i\cap Y=\emptyset$, then set $L'_i(z):= L_i$. Taking the union of all these lines
 we get a  grid $T_1(z)\in |\Oo _Q(\tau-\sharp (N'),\sharp (N')|$  union
 of $\sharp (N')$ lines of bidegree $(0,1)$ containing a point of $Y\cap Q$ and $\tau -\sharp (N')$ lines of bidegree $(1,0)$, $\sharp (N'')$ of them containing
 a point of $Y\cap Q$. Set $S_o:= S'$. For each $z\in \Gamma \setminus \{o\}$ we get a set ${S'}_z\subset Q$ taking the union
 of all $R'_j(z)\cap L'_i(z))$ according to the rules of the cases $(f,1)$, $(f,0,+)$ or $(f,0,-)$. Note that $\sharp ({S'}_z)=v(x+2,t,k)$. The family $\{{S'}_z\}_{z\in \Delta}$ is flat.
The semicontinuity theorem for cohomology gives $h^i(\Ii _{C_{t,k}\cup Y''_z\cup {S'_z}}(x+2))=0$, $i=0,1$, for a general $z\in \Gamma$.

 If $v(x,t,k)=0$, then we take $S=\emptyset$. In this case the same construction work, because $\tau\le x+2$ and $f \le x+3-u(x+2,t,k)+u(x,t,k)$ and $2u(x-2,t,k) \ge \delta$ (Lemma \ref{ov2+3}).
\end{proof}

\begin{lemma}\label{ov11}
Assume $k\le t\le 200k$, $t +k \ge 42040$, that $A(y,t,k)$ is true and that either $y=t+k+1$ or $A(y-2,t,k)$
is true. Then
$B(y+2,t,k)$ is true.
\end{lemma}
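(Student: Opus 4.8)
The plan is to reproduce the mechanism of Lemma~\ref{ov12}, but with the induction anchored at the $A$-configuration instead of a (nonexistent) $B(y,t,k)$. Fix a smooth quadric $Q$ and a curve $C_{t,k}$ meeting $Q$ transversally in a general set of $2d_{t,k}$ points. By $A(y,t,k)$ and the strong form recorded in Remark~\ref{2no1} I would first produce a smooth connected curve $Y_0$ of degree $a(y,t,k)$ and genus $g(y,t,k)$, disjoint from $C_{t,k}$, transversal to $Q$ with $(C_{t,k}\cup Y_0)\cap Q$ general, together with an adapted grid $T_0\subset Q$ for which $h^0(\Ii_{C_{t,k}\cup Y_0\cup S}(y))=\max\{0,b(y,t,k)-\sharp(S)\}$ and $h^1(\Ii_{C_{t,k}\cup Y_0\cup S}(y))=\max\{0,\sharp(S)-b(y,t,k)\}$ for every $S\subseteq\mathrm{Sing}(T_0)$. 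The curve sought for $B(y+2,t,k)$ has degree $u(y+2,t,k)$ and genus $g-g_{t,k}$; since $g(y,t,k)\le g-g_{t,k}<g(y+2,t,k)$ and, by Lemmas~\ref{=o1} and~\ref{=o2}, $202\le u(y+2,t,k)-a(y,t,k)\le a(y+2,t,k)-a(y,t,k)$ with $2a(y,t,k)\ge u(y+2,t,k)-a(y,t,k)+202$, the curve $Y_0$ is close enough to the target to be reached by a single grid-plus-nilpotents step.

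Set $\delta':=u(y+2,t,k)-a(y,t,k)$ and $\delta_0:=a(y+2,t,k)-a(y,t,k)$. I would attach to $Y_0$ a grid $T'\subset Q$ of degree $\delta'$, realised inside a bidegree modification of $T_0$ as in Remark~\ref{exp1}, of bidegree $(e',\delta'-e')$ with $e'\le 201$, plus fat points $\chi:=\cup_{o\in S_{\mathrm{nil}}}2o$ supported on a set $S_{\mathrm{nil}}\subseteq\mathrm{Sing}(T')$ with $\sharp(S_{\mathrm{nil}})=b(y,t,k)$ (such $S_{\mathrm{nil}}$ exists because $\delta'\ge -2+y(y+1)/(102(y+2))$ by Lemma~\ref{=o1}, while $b(y,t,k)\le y-1$, resp.\ $b(t+k+1,t,k)\le t+k$ when $y=t+k+1$, together with $t+k\ge 42040$). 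The number $\ell$ of lines of $T'$ meeting $Y_0\cap Q$ is then forced by requiring $\chi(\Oo_{Y_0\cup T'\cup\chi})=1-(g-g_{t,k})$, the degree being automatically $a(y,t,k)+\delta'=u(y+2,t,k)$; using $g(y,t,k)\le g-g_{t,k}<g(y+2,t,k)$, the identity $g(y+2,t,k)-g(y,t,k)=\delta_0-202$, and Lemmas~\ref{=o2} and~\ref{ov2+3}, one checks $1\le\ell\le\delta'$. As in step~(b) of Lemma~\ref{ov12}, the reduced scheme $Y_0\cup T'$ is a tree of lines attached to $Y_0$, so, resolving the fat points one at a time, $Y_0\cup T'\cup\chi$ is a flat limit of smooth connected curves of degree $u(y+2,t,k)$ and genus $g-g_{t,k}$, and by Remark~\ref{aaa+1+1} (\cite[Theorem 5.12]{pe}) its smooth model $Y$ may be taken transversal to $Q$ with $(C_{t,k}\cup Y)\cap Q$ general.

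Next I would produce the grid $T$ of $B(y+2,t,k)$ and verify condition~(3). Following step~(c) of Lemma~\ref{ov12}, $T$ is built in the ruling of $Q$ opposite to the one carrying the bulk of $T'$: with $\tau:=u(y+4,t,k)-u(y+2,t,k)$ and $f$ the minimal positive integer with $(f-1)(\tau-f)\le v(y+2,t,k)$ (so $f\le 201$ by Lemma~\ref{=o3}), take $T\in|\Oo_Q(f,\tau-f)|$ through the appropriate points of $Y\cap Q$; the bounds $e',f\le 201$ and $\delta',\tau\le y+2$ (Lemmas~\ref{ov6+}, \ref{=o3}, \ref{ov2+3}) guarantee $T'\cup T$ fits on $Q$ and that $h^1(Q,\Ii_{S,Q}(y+2-e',y+2-\delta'+e'))=0$ for every $S\subseteq\mathrm{Sing}(T)$ with $\sharp(S)=v(y+2,t,k)$. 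For such $S$, put $Z:=C_{t,k}\cup(Y_0\cup T'\cup\chi)\cup S$; then $\Res_Q(Z)=C_{t,k}\cup Y_0\cup S_{\mathrm{nil}}$, and since $S_{\mathrm{nil}}$ specialises into $\mathrm{Sing}(T_0)$ with $\sharp(S_{\mathrm{nil}})=b(y,t,k)$, the strong form of $A(y,t,k)$ and semicontinuity give $h^i(\Ii_{C_{t,k}\cup Y_0\cup S_{\mathrm{nil}}}(y))=0$, $i=0,1$. The trace $Z\cap Q$ is the grid $T'\cup T$ together with a general set of residual points whose cardinality is exactly what is forced by the defining relation~(\ref{eqov3}) for $x=y+2$ compared with~(\ref{eqov4}), so $h^i(Q,\Ii_{Z\cap Q,Q}(y+2))=0$; the residual exact sequence of $Q$ then yields $h^i(\Ii_Z(y+2))=0$, and the family argument of step~(c) of Lemma~\ref{ov12} transports this to the smooth model $Y$, proving $B(y+2,t,k)$.

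The hard part will be the genus--degree bookkeeping of the second paragraph: one must simultaneously impose $\sharp(S_{\mathrm{nil}})=b(y,t,k)$ (forced by the cohomology at twist $y$), the degree $u(y+2,t,k)$, and the genus $g-g_{t,k}$, and then show that the resulting line-count $\ell$ and bidegree $(e',\delta'-e')$ are jointly admissible, i.e.\ $b(y,t,k)\le\sharp(\mathrm{Sing}(T'))$ and $1\le\ell\le\delta'$; this is precisely where the estimates of Section~\ref{SnII}, the hypothesis $t+k\ge 42040$, and the bound $g-g_{t,k}\le 2k^2+2k$ from~(\ref{equ1}) are consumed. A secondary delicate point is the extreme case $v(y+2,t,k)=0$ (equivalently $g-g_{t,k}$ very near $g(y,t,k)$), in which $T'$ degenerates to an almost-trivial grid and one must reduce two twists rather than one; there the hypothesis that $y=t+k+1$ or that $A(y-2,t,k)$ holds is used to anchor the residual induction at twist $y-2$, exactly as the case $v(x,t,k)=0$ is handled inside Lemma~\ref{ov12}.
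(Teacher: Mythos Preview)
Your overall architecture is right and mirrors the paper's: start from a solution $(Y_0,T_0)$ of the $A$--assertion at level $y$, attach a degree--$\delta'$ grid with $b(y,t,k)$ fat points so that the resulting curve has degree $u(y+2,t,k)$ and genus $g-g_{t,k}$, then manufacture the grid $T$ of type $(f,\tau-f)$ for $B(y+2,t,k)$ in the opposite ruling and close with the residual sequence. The numerical shell (Lemmas~\ref{=o1}--\ref{=o3}, the bound $\gamma\le 2k^2+2k$, etc.) is also the right set of tools.

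Where your write--up diverges from the paper is in the \emph{role of} $A(y-2,t,k)$. You invoke it only for a putative extreme case $v(y+2,t,k)=0$; in the paper it is used in the generic case, and for a structural reason. The grid $T_0$ supplied by $A(y,t,k)$ (even in the strong form of Remark~\ref{2no1}) has total degree $\delta_0=a(y+2,t,k)-a(y,t,k)$ and a fixed bidegree $(e_0,\delta_0-e_0)$, whereas the grid you must actually attach has total degree $\delta'=u(y+2,t,k)-a(y,t,k)\le\delta_0$ and a bidegree $(e',\delta'-e')$ forced by the \emph{genus} equation $\ell=\gamma+b(y,t,k)-e'(\delta'-e')+\delta'\in[1,\delta']$. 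Your specialization ``$S_{\mathrm{nil}}\subseteq\mathrm{Sing}(T')$ specialises into $\mathrm{Sing}(T_0)$'' only works if $T'$ degenerates to a sub--grid of $T_0$, i.e.\ if $e'\le e_0$ \emph{and} $\delta'-e'\le\delta_0-e_0$; equivalently $e'\in[e_0-(\delta_0-\delta'),\,e_0]$. Since $\delta_0-\delta'$ can be as small as $0$ or $1$ (see the paper's equation~(\ref{eqov11.1=})), this window for $e'$ may be a single value, and you have not checked that this forced $e'$ also lands the genus count $\ell$ in $[1,\delta']$. The paper sidesteps this clash by introducing an auxiliary assertion $A'(y,t,k)$ --- identical to $A(y,t,k)$ except that its grid is already of degree $\delta'$ with $e'$ chosen against $\delta'$, not $\delta_0$ --- and proving $A'(y,t,k)$ by \emph{re--running} the last inductive step from $A(y-2,t,k)$ (this is why the hypothesis is stated as ``$y=t+k+1$ or $A(y-2,t,k)$ holds''). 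Along the way the paper isolates the key inequality $\delta'-\gamma\ge201$ (its ``Claim''), and handles a boundary sub--case (``case $(f,2)$'', arising when one is in part~(c) of Remark~\ref{exp1} and $\delta'<\mu+e$) by passing to a larger $f$ with $f(\delta'-f)\le\gamma+b(y,t,k)$; both of these are exactly the genus--degree compatibility checks your sketch defers. So your plan can be made to work, but the clean way to do it --- and the way the paper does --- is to rebuild the level--$y$ grid from $A(y-2,t,k)$ rather than to coerce the grid $T_0$ that $A(y,t,k)$ hands you.
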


\begin{proof}
Remember that $y\ge t+k+1$ and that $y\equiv t+k+1\pmod{2}$. 

 First assume $y\ge t+k+3$. Hence $A(y-2,t,k)$ is true. Taking the difference between (\ref{eqov3}) for $x=y+2$ and (\ref{eqo5}) for $s=y$
we get
\begin{align}\label{eqov11.1}
& 2(d_{t,k} +a(y,t,k)) + (y+2)(u(y+2,t,k)-a(y,t,k)) \notag \\
& +v(y+2,t,k)-b(y,y,k) -g+g_{t,k} +g(y,t,k) = (y+3)^2
\end{align}
Set $\delta := u(y+2,t,k)-a(y,t,k)$, $\gamma := g-g_{t,k}-g(y,t,k)$ and $\mu:= \delta -\gamma $. Since $g\le
g_{t,k}+g(y+2,t,k)$ we have $a(y+2,t,k)\ge u(y+2,t,k)$. By the definition of $u(y+2,t,k)$ we get
\begin{align}\label{eqov11.1=}
&(y+2)(a(y+2,t,k)-u(y+2,t,k)) = \\
&v(y+2,t,k)-b(y+2,t,k) + g_{t,k}+g(y+2,t,k) -g \notag
\end{align}

\quad \emph{Claim:} We have $\delta -\gamma \ge 201$.

\quad \emph{Proof of the Claim:} The integers $u(y+2,t,k)$,$v(y+2,t,k)$, $\delta$, $\gamma$ and $\mu$ depend on $g$ and we
write
$u(y+2,t,k)(g)$, $v(y+2,t,k)(g)$, $\delta (g)$ and $\mu (g)$ to stress their dependence on $g$. They are defined for all $g$
with
$g_{t,k}+g(y,t,k)\le y< g(y+2,t,k)$ by the definition of $y$, but we may also define them for $g=g_{t,k}+g(y+2,t,k)$,
writing $u(y+2,g,t,k)(g_{t,k}+g(y+2,t,k)): = a(y+2,t,k)$ and $v(y+2,g,t,k)(g_{t,k}+g(y+2,t,k)):= b(y+2,t,k)$ and still have
(\ref{eqov11.1}) and (\ref{eqov11.1=}). The Claim is true with strict inequality for $g= g_{t,k}+g(y+2,t,k)$ by Lemma
\ref{=o1}. Use (\ref{eqov11.1=}) both for $g$ and $g-1$. When we decrease by one the genus in these two equations we decrease
by at most one the integer $u(y+2,t,k)(g)$. Hence we get the Claim.

We introduce the following assertion.

\quad {\bf {Assertion}} $A'(y,t,k)$: There is a pair $(Y,T_1)$ with the following properties. $Y$ is a smooth and connected curve of degree $a(y,t,k)$ and genus $g(y,t,k)$ such
 that $Y\cap C_{t,k}=\emptyset$, $Y$ intersects transversally $Q$ and $(C_{t,k}\cup Y) \cap Q$ is general in $Q$. In particular no line of $Q$ contains two or more points
 of $Y\cup C_{t,k}$. If $b(y,t,k) =0$ we take as $T$ any grid. 
 Now assume $b(y,t,k) > 0$. Let $e$ be the maximal positive integer such that $b(y,t,k) > (e-1)(\delta-e-1)$ and $e<\delta /2$. Lemma \ref{=o3} gives that $e$ exists and that $e\le 201$. We take a grid $T\subset Q$ of bidegree $(e,\delta -e)$ adapted to $(Y,C_{t,k}\cap Q)$ such that $h^i(\Ii _{C_{t,k}\cup Y\cup S}(y)) =0$, $i=0,1$, for each $S\subseteq \mathrm{Sing}(T_1)$ with $\sharp (S) =b(y,t,k)$.
 
 Assertion $A'(y,t,k)$ is proved as in Lemma \ref{ov7}, with the distinction of the 3 cases introduced in Remark \ref{exp1}, except for the following modification
 in one of them, which we now discuss as step (a).
 
 \quad (a)  Assume $(e-1)(\delta -e-1) < b(y,t,k) \le (e-1)(\delta -e-1) +e-2$ and $\delta <\mu +e$. Since $e\le 202$, we get $\gamma \le 201$. Let $f$ be the maximal integer
 such that $f(\delta -f) \le \gamma + b(y,t,k)$ and $f\le \delta /2$. We say that $A'(y,t,k)$ is in case $(f,2)$. We assume the existence of distinct lines $R_j\in |\Oo _Q(1,0)|$, $1\le j\le f$, with $R_j\cap (Y\cap Q)\ne \emptyset$ for all $j$ and distinct lines $L_i\in |\Oo_Q(0,1)|$,
 $1\le i \le \delta -f$,
 such that $L_i\cap (Y\cap Q) \ne \emptyset$ if and only if $1\le i \le b(y,t,k)+\gamma -f(\delta -f)$,
$L_i\cap (C_{t,k}\cap Q) =R_j\cap (C_{t,k}\cap Q)=\emptyset$ for all $i, j$, and let $S$ be the union of all points $R_j\cap L_i$
 with either $j\ge 2$ or $j=1$ and $1\le i \le b(y,t,k) - f(\delta -f)$.
 
By Lemma \ref{=o2} we have enough points of $\sharp (Y\cap Q)\ge \delta$  to find the lines $R_j$ and $L_i$
 as described in Remark \ref{exp1}; also we use this observation and the same in the analogous of step ({c}) of the proof of Lemma \ref{ov11}.

\quad (b) Note that in the case  we have $\gamma = g-g(y,t,k)-g_{t,k}\le 201$. In the case $\gamma =0$, the proof of Lemma \ref{ov11} (case ({c}) of Remark \ref{exp1})
would work verbatim, while in the case $0< \gamma \le 201$ it only requires the modifications outlined in (d), which explains
exactly which lines $L_i$ must intersect $Y\cap Q$. Part ({c}) of the proof of Lemma \ref{ov12} with $x=y-2$ shows how to
prove that $A(y-2,t,k)$ implies $A'(y,t,k)$. As in Remark \ref{exp1} and Lemmas \ref{ov12} and \ref{ov11} $A'(y,t,k)$
implies $B(y+2,t,k)$. 
\end{proof}

\section{Proofs of Theorems \ref{i1}, \ref{i2}, and \ref{i3}}\label{Se}
\begin{proof}[End of the proof of Theorem \ref{i2}:] Lemma \ref{ov9} gives $y\le m-7$. Hence $B(m-5,t,k)$ and $B(m-3,t,k)$ are true.  Since $1+(m-1)d +1-g =\binom{m+1}{3}$, we have $u(m-1,t,k) = d-1$ and $v(m-1,t,k) = m-3$. Take a solution of $B(m-5,t,k)$ with respect to $Q$ and use the proof of Lemma \ref{ov12} to prove the existence of a solution of the following modification $B'(m-3,t,k)$
of $B(m-3,t,k)$:

\quad {\bf {Assertion}} $B'(m-3,t,k)$: Let $Q$ be a smooth quadric. Fix $C_{t,k}$ intersecting transversally $Q$ and such that $Q\cap C_{t,k}$ is formed by $2d_{t,k}$ general
 points of $Q$. We call $B'(m-3,t,k)$ the existence of a pair $(Y,T_1)$ with the following properties. $Y$ is a smooth and connected curve of degree $u(m-3,t,k)$ and genus $g-g_{t,k}$ such
 that $Y\cap C_{t,k}=\emptyset$, $Y$ intersects transversally $Q$ and $(Y\cup C_{t,k}) \cap Q$ is general in $Q$. In particular no line of $Q$ contains two or more points
 of $Y\cup C_{t,k}$. $T_1$ is a grid adapted to $(Y,C_{t,k}\cap Q)$, which we now describe. In all cases we assume that for all $S\subseteq \mathrm{Sing}(T_1)$
 with $\sharp (S) =v(m-3,t,k)$ we have $h^i(\Ii _{C_{t,k}\cup Y\cup S}(x)) =0$, $i=0,1$. If $v(m-3,t,k)=0$, then
take as $T_1$ any adapted grid. Now assume $v(m-3,x,t) >0$. Set $\delta := d-u(m-3,t,k)$. Let $e$ be the maximal positive integer such that $v(x,t,k) > (e-1)(\delta -e)$ and $e\le \delta/2$. Since $d= u(m-1,t,k)+1$, Lemma \ref{=o3} gives that $e$ exists and that $e\le 201$. We assume that the grid $T_1$ has bidegree $(e,\delta -e)$.

If $v(m-3,t,k)>0$ we write $R_j$, $1\le j \le e$, for the lines of bidegree $(1,0)$ of $T_1$ and $M_i$, $1\le i\le \delta -e$, for the one of bidegree $(0,1)$. As in the proof of Lemma \ref{ov7} we deform $T_1$ to another grid $T$ with the same lines of bidegree $(1,0)$ and with lines $L_i$ of bidegree $(0,1)$ only some of them containing a point of $Y\cap Q$.

The definition of $e$ gives $v(m-3,t,k) \le e(d -u(m-3,t,k) -e-1)$. Fix distinct lines $R_j\in |\Oo _Q(1,0)|$, $1\le j\le e$, with $R_j\cap (Y\cap Q)\ne \emptyset$ for all $j$ and distinct lines $L_i\in |\Oo_Q(0,1)|$,
 $1\le i \le \delta -e$,
 such that $L_i\cap (Q\cap (Y\cup C_{t,k})) \ne \emptyset$ if and only if $1\le i \le v(x,t,k) -(e-1)(\delta -e)$. We assume $Y\cap R_j\cap L_i=\emptyset$ for all $i, j$.  We assume $R_j\cap Y=\emptyset$ for all $j$ , $L_i\cap C_{t,k} =\emptyset$ if $i\le \delta -e-2$, $L_i\cap C_t\ne \emptyset$ if and only if $i=\delta -e-1$
 and $L_i\cap C_k\ne \emptyset$ if and only if $i=\delta -e$. Let $S$ be the union of the points $R_j\cap L_i$
 with either $j\ge 2$ or $j=1$ and $1\le i \le v(x,t,k) - (e-1)(\delta -e)$. About $B'(m-3,t,k)$ we only use that $h^i(\Ii _{C_{t,k}\cup Y\cup S}(x)) =0$, $i=0,1$, for this specific set $S\subset \mathrm{Sing}(T)$. Set $\chi := \cup _{o\in S} 2o$. Let $Y'$ be the union of $Y$, $\chi$ and all lines $R_j$ and $L_i$.
 $Y'$ is smoothable to a smooth and connected curve $Y''$ of genus $g-g_{t,k}$ and we may find a smoothing family fixing the two points of $Y'\cap C_{t,k}$. For this choice of $Y''$
 the curve $M:= Y''\cup C_{t,k}$ is a nodal and connected curve with arithmetic genus $g$ and with exactly $2$ nodes. Since $t\ge k$, we have $h^1(\Oo _M(t)) =0$ and hence $h^1(\Oo _M(m-2)) =0$. The vector bundle $N_M|C_t$ (resp. $N_M|C_k$, resp.
 $N_M|Y''$) is obtained from $N_{C_t}$ (resp. $N_{C_k}$, resp. $N_{Y''}$) making a positive elementary transformation at the point $Y''\cap C_t$ (resp. the point $Y''\cap C_k$, resp. each of the two points $Y''\cap C_{t,k}$) in the  direction corresponding to the tangent line of $Y''$ (resp. $Y''$, resp. $C_{t,k}$)
 at the point $Y''\cap C_t$ (resp. the point $Y''\cap C_k$, resp. each of the two points of $Y''\cap C_{t,k}$). Since
 $h^1(N_{Y''}(-2)) =0$, $h^1(N_{C_{t,k}}(-2))=0$ and $\sharp (\mathrm{Sing}(M)) =2$, the Mayer-Vietoris exact sequence of $Y''$ and $C_{t,k}$ gives
 $h^1(N_M(-1)) =0$. Hence $M$ is smoothable (\cite[Corollary 1.2]{fl1}). By the semicontinuity theorem for cohomology a smoothing of $M$ proves Theorem \ref{i2} for the pair $(d,m)$, except that we must discuss the bounds on $m$ and $g$ assumed in Theorem \ref{i2}. We need $g, m$ for which we may take $t\ge 10^5$ and some $k$
 with $t/30 \ge k\ge t/200$ (these bounds are sufficient to use Remarks \ref{ov8}  and \ref{aaa+1+1}). For $m$ we need $m\ge k+t+7$ and we do not need other assumptions on $m$ if the pair $(g,m)$ allows us to do the construction, i.e. the burden is shifted to $g$.
It is sufficient to have $g\ge g_{t,k}+g(t+k+1,t,k)$. We have $g_x = 1+x(x+1)(2x-5)/6 \le x^3/3$ for all $x\ge 10$. Since $k\le
t/30$ for very large $t$ (Lemma \ref{u3}), we assume $g\ge g_{10^5}+ g_{\lfloor 10^4/3\rfloor}$. Hence it is
sufficient to assume $g\ge 10^{15}/3 +10^{12}/27$. Hence it is sufficient to assume $g \ge 0.34\cdot 10^{15}$.
 \end{proof}

\begin{proof}[Proof of Theorem \ref{i1}:] Take $(d,m)$ in the range A and set $g:= 1+m(d-1)-\binom{m+2}{3}$. By
\cite[Corollary 2.4]{bbem} we may
 assume $d<\frac{m^2+4m+6}{4}$. Since $m\ge  13.8\cdot 10^5$, and $d> \frac{m^2+4m+6}{6}$, we have $d \ge 31.3\cdot 10^{10}$. Hence $0.02d^{3/2} \ge 0.34\cdot 10^{15}$. We claim that for these integers
 $d$ we always have $Kd^{3/2} -6\epsilon d \ge 0.02d^{3/2}$, where $K:= \frac{2}{3}\frac{1}{10}^{3/2}$ and $\epsilon:= \frac{11}{20} +4(\frac{1}{20})^{3/2}$. Indeed, $K\ge 0.021$
 and $6\epsilon \le 0.6$ and for $d\ge 31.3\cdot 10^{10}$ we have $0.001\sqrt{d} \ge 0.6$.
Hence if $g\le 0.02d^{3/2}$, then we apply \cite[Corollary 1.3]{bef}. If $g\ge 0.34\cdot 10^{15}$, then we apply Theorem \ref{i2}. \end{proof}

  \begin{proof}[Proof of Theorem \ref{i3}:]
 By Theorem \ref{i1} we may assume that $g< G_A(d,m)$. We prove Theorem \ref{i3} for
 the fixed integer $m$ by induction on the integer $d$.

  \quad (a) First assume that $(d-1,m)$ is not in the Range A, i.e. assume $d-1 \le \frac{m^2+4m+6}{6}$. Since $d\le \frac{m^2+4m+6}{6} +1$, we
 have $G_A(d,m) \le 1 + m-1 +\frac{(m-1)(m^2+4m+6)}{6} -\binom{m+2}{3} = m-1$. Since $d >  \frac{m^2+4m+6}{6}$, we have $g\le d-3$ and so it is sufficient to quote \cite{be2}
 and that in this range of degrees and genera a general non-special curve $C$ has $h^1(N_C(-2)) =0$ (Remark \ref{ov8}).

 \quad (b) Now assume that $(d-1,m)$ is in the Range A. By the inductive assumption for each integer $q$ such that $0\le q \le G_A(d-1,m)$
 there is a smooth and connected curve $Y\subset \PP^3$ of genus $q$ and degree $d-1$ such that $h^1(N_Y(-1)) =0$ and $h^0(\Ii _Y(m-1)) =0$. Let $L\subset \PP^3$
 be a general line intersecting quasi-transversally $Y$ at a unique point $p$. The vector bundle $N_{Y\cup L|Y}(-1)$ is obtained from $N_Y(-1)$ making a positive
 elementary transformation at $p$ (\cite{hh}). The vector bundle $N_{Y\cup L|L}$ is obtained from $N_L(-1)$ making a positive transformation at $p$ (\cite{hh}) and hence
 it is a direct sum of a line bundle of degree $1$ and a line bundle of degree $0$. A Mayer-Vietoris exact sequence gives $h^1(N_{Y\cup L}(-1)) =0$ and hence $Y\cup L$ is smoothable (\cite[Corollary 1.2]{fl1}). If $g\le G_A(d-1,m)$, then it is sufficient to take $q:= g$. Now assume $G_A(d-1,m)<g<G_A(d,m)$. In this range any solution $C$
 must have $h^0(\Ii _C(m-1)) =0$ and hence $h^1(\Ii _C(m-1)) =G_A(d,m)-g$.
 
 Assume for the moment that $g\ge 0.34\cdot 10^{15}$. We repeat the construction of Theorem \ref{i2} for this integer $g$. Remember that the integers $t$, $k$, $y$, $u(x,t,k)$ and $v(x,t,k)$
 only depends on $g$ and the parity of $m$. We have $1+(m-1)d -g = \binom{m+2}{3} +G_A(d,m) -g$. Since $G_A(d,m) -G_A(d-1,m) =m-1$, we have $1\le G_A(d,m) -g \le m-2$.
 Since $3-g +(m-1)u(m-1,t,k)+v(m-1,t,k) = \binom{m+2}{3}$ and $0\le v(m-1,t,k)\le m-2$, we have $u(m-1,t,k)-1 \le d \le u(m-1,t,k) +2$ and the first inequality
 holds only if $g=G_A(d,m)-1$ and $v(m-1,t,k) =0$. Hence it is sufficient to adapt $B'(m-3,t,k)$ with the new value of $d$.
 
 Now assume $g< 0.34\cdot 10^{15}$. Since $m\ge 13.8\cdot 10^5$ and $d> \frac{m^2+4m+6}{6}$, we have $d \ge 31.3\cdot 10^{10}$. Hence $0.02\cdot d^{3/2} \ge 0.34\cdot 10^{15}$. Thus as in the proof of Theorem \ref{i1} it is sufficient
 to quote \cite[Corollary 1]{bef}.
  \end{proof}
  
  \begin{remark}\label{i4}
  Fix positive integers $m, d, g$, $m\ge 3$, such that there is a smooth, connected and non-degenerate curve $C\subset \PP^3$ with degree $d$, genus $g$, $h^0(\Ii _C(m-1)) =0$
  and $h^1(N_C(-1)) =0$. The latter condition implies that asymptotically for $d\gg 0$ we are not far from the {\emph{generalized Range A}} $\frac{m^2+4m+6}{6} \le d \le D_m:= m(m+1)/2$ of \cite[Proposition 4.2]{bbem}. These conditions are satisfied with $g = G_A(d,m)$ if $m\gg 0$ and $(d,m)$ is in the Range A (Theorem \ref{i1}) or for the $(m,d,g)$ covered by \cite[Proposition 4.2]{bbem} or if $m\gg 0$ and $g\le G_A(d,m)$ (Theorem \ref{i3}).
  
  \quad {\emph{Claim:}} For each integer $d_1>d$ there is  a smooth and connected curve $X\subset \PP^3$ with degree $d_1$, genus $g$, $h^0(\Ii _X(m-1)) =0$
  and $h^1(N_X(-1)) =0$.
  
  \quad {\emph{Proof of the Claim:}} By induction on $d_1$ we reduce to the case $d_1=d+1$.
  Fix $p\in C$. Let $L\subset \PP^3$ be a general line containing $p$. We have $\sharp (C\cap L)=1$ and $L$ is not tangent to $C$ at $p$. Obviously $h^0(\Ii _{C\cup L}(m-1)) =0$. As in step (b) of the proof of Theorem \ref{i3}
we get $h^1(N_{C\cup L}(-1)) =0$. Hence $C\cup L$ is smoothable (\cite[Corollary 1.2]{fl1}). Use the semicontinuity theorem. \qed.

When $d_1\gg d$ we may cover some pairs $(d_1,g')$ with $g'>g$ taking in the proof of the Claim instead of a line a smooth rational curve $D$ of degree $d_1-d$.
with $\sharp (D\cap C)=g'-g+1$ and $D$ intersecting quasi-transversally $C$. Since any two quintuples of points of $\PP^3$ in linearly general position are projectively equivalent,
for $g'-g \le 4$ we may see $D$ as a general rational space curve of degree $d_1-d$ and hence if $d_1-d\ge 3$ we may assume that the normal bundle of $D$
is a direct sum of two line bundles of degree $2d_1-2d-1$.
  \end{remark}
  
  \begin{proof}[Proof of Corollary \ref{i5}:] By Theorem \ref{i3} there is a smooth, connected and non-degenerate curve $Y\subset \PP^3$ with degree $\delta$, genus $g$, $h^0(\Ii _Y(m-1)) =0$
  and $h^1(N_Y(-1)) =0$. Apply the Claim in Remark \ref{i4}.
  \end{proof}

 Now we discuss the weak parts of our proof of Theorem \ref{i2}, since any improvement of these parts would give huge improvements for the lower bounds assumed in Theorem \ref{i2}
 and hence for the assumption of the results in the introduction. A key point was using
 \cite[Corollary 2.4]{bbem}, since for $d$ large with respect to $m$ our proof is far less efficient (for a fixed $m$ it leaves gaps in the set of all $d$ satisfying (\ref{eqi1}).
 Hence if there is some other construction of good curves covering, say, the range $\frac{m^2+4m+6}{4.5} \le d < \frac{m^2+4m+6}{4}$, then it would be a very good help and we could use pairs $(t,k)$ with far lower $t/k$; Lemmas \ref{oov2}, \ref{oov2.1} and \ref{oov4} show how better are the bounds and simplified the proofs if it is sufficient to take all $(t,k)$ with $k\le t\le 3k$. Then (as observed at the end of Remark \ref{ov8})  a further improvement may be obtained by sharpening the results of \cite{bef}. The very first step for the latter
 project  (as observed at the end of Remark \ref{ov8}) is to use \cite{l} instead of \cite{pe}.

\end{document}